\numberwithin{equation}{section}
\numberwithin{figure}{section}
\theoremstyle{plain}
\newtheorem{thm}{\protect\theoremname}[section]
  \theoremstyle{plain}
  \newtheorem{conjecture}[thm]{\protect\conjecturename}
  \theoremstyle{definition}
  \newtheorem{defn}[thm]{\protect\definitionname}
  \theoremstyle{plain}
  \newtheorem{prop}[thm]{\protect\propositionname}
  \theoremstyle{plain}
  \newtheorem{lem}[thm]{\protect\lemmaname}
  \theoremstyle{plain}
  \newtheorem{cor}[thm]{\protect\corollaryname}
  \theoremstyle{remark}
  \newtheorem{rem}[thm]{\protect\remarkname}
  \providecommand{\conjecturename}{Conjecture}
  \providecommand{\corollaryname}{Corollary}
  \providecommand{\definitionname}{Definition}
  \providecommand{\lemmaname}{Lemma}
  \providecommand{\propositionname}{Proposition}
  \providecommand{\remarkname}{Remark}
\providecommand{\theoremname}{Theorem}
\begin{document}

\title{Shintani zeta functions and a refinement of Gross's leading term
conjecture}

\author{Minoru Hirose}

\begin{abstract}
We introduce the notion of \emph{Shintani data}, which axiomatizes
algebraic aspects of Shintani zeta functions. We develop the general
theory of Shintani data, and show that the order of vanishing part
of Gross's conjecture follows from the existence of a Shintani datum.
Recently, Dasgupta and Spiess proved the order of vanishing part of
Gross's conjecture under certain conditions. We give an alternative
proof of their result by constructing a certain Shintani datum. We
also propose a refinement of Gross's leading term conjecture by using
the theory of Shintani data. Out conjecture gives a conjectural construction
of localized Rubin-Stark elements which can be regarded as a higher
rank generalization of the conjectural construction of Gross-Stark
units due to Dasgupta and Dasgupta-Spiess. 
\end{abstract}
\maketitle
\tableofcontents{}

\section*{Introduction}

In this paper, we formulate a conjecture concerning to the special
value of Shintani zeta functions. This conjecture has two aspects.
The one is a refinement of Gross's leading term conjecture. The other
is a conjectural construction of localized Rubin-Stark elements.

\subsection{\label{sub:Rubin-Stark-conjecture}Rubin-Stark conjecture and Rubin-Stark
elements}

Let $F$ be a totally real field. For a finite abelian extension $E$
of $F$ and a set $S$ of places of $F$, we write $S_{E}$ for the
set of places of $E$ lying above places in $S$. For a finite place
$v$ of $F$, we write $F_{v}$ for the completion of $F$ at $v$
and $O_{v}$ for the maximal compact subring of $F_{v}$. We write
$S_{\infty}$ for the set of infinite places of $F$.

Let $S$ and $T$ be disjoint finite sets of places of $F$ such that
$S\supset S_{\infty}$. For a finite abelian extension $E$ of $F$,
we write $\mathcal{O}_{E,S}$ for the ring of $S_{E}$-integers of
$E$ and $\mathcal{O}_{E,S,T}^{\times}$ for the group of units of
$\mathcal{O}_{E,S}$ congruent to $1$ modulo all places in $T_{E}$.
For a finite abelian extension $E$ of $F$ unramified outside $S$,
we write $\Theta_{E,S,T}(s)$ for the Stickelberger function. Put
\[
\Theta_{E,S,T}=\Theta_{E,S,T}(0)\in\mathbb{Q}[{\rm Gal}(E/F)].
\]

Let $v_{1},\dots,v_{r}$ be distinct elements of $ $$S$. Put $V=\{v_{1},\dots,v_{r}\}$.
Assume that $V\neq S$. Let $H$ be a finite abelian extension of
$F$ unramified outside $S$ such that $v_{1},\dots,v_{r}$ split
completely in $H/F$. Fix places $w_{1},\dots,w_{r}$ of $H$ lying
above $v_{1},\dots,v_{r}$. In Section \ref{sec:ZGmodules}, we define
a certain submodule $\Lambda_{H,S,T}$ of $(\bigotimes_{\mathbb{Z}}^{r}\mathcal{O}_{H,S,T}^{\times})\otimes_{\mathbb{Z}}\mathbb{Z}[{\rm Gal}(H/F)].$
The module $\Lambda_{H,S,T}$ is isomorphic to the Rubin-Stark lattice
$\Lambda_{S,T}\subset\mathbb{Q}\wedge_{\mathbb{Z}[{\rm Gal}(H/F)]}^{r}\mathcal{O}_{H,S,T}^{\times}$
defined in \cite{MR1385509}. Rubin-Stark conjecture is equivalent
to the following conjecture (see Conjecture \ref{Conj:RubinStark-1}).
\begin{conjecture}
[Rubin-Stark conjecture, equivalent to Conjecture B in \cite{MR1385509}]\label{Conj:RubinStark}Let
$R_{\infty}:(\bigotimes_{\mathbb{Z}}^{r}\mathcal{O}_{H,S,T}^{\times})\otimes_{\mathbb{Z}}\mathbb{Z}[{\rm Gal}(H/F)]\to\mathbb{R}[{\rm Gal}(H/F)]$
be a homomorphism defined by
\[
R_{\infty}(u_{1}\otimes\cdots\otimes u_{r}\otimes[\sigma])=\left(\prod_{j=1}^{r}-\log\left|u_{j}\right|_{w_{j}}\right)[\sigma].
\]
Then there exists a unique element $\epsilon_{H,S,T,V}\in\Lambda_{H,S,T}$
such that
\[
R_{\infty}(\epsilon_{H,S,T,V})=\lim_{s\to0}s^{-r}\Theta_{H,S,T}(s).
\]

\end{conjecture}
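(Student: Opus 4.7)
The plan is to split the statement into the uniqueness and the existence parts, addressing each by different means.

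Uniqueness is essentially a nondegeneracy statement for the regulator $R_{\infty}$. Tensoring with $\mathbb{R}$ and decomposing by characters $\chi\colon{\rm Gal}(H/F)\to\mathbb{C}^{\times}$, the $\chi$-component of $R_{\infty}$ on $(\bigotimes_{\mathbb{Z}}^{r}\mathcal{O}_{H,S,T}^{\times})\otimes\mathbb{R}[{\rm Gal}(H/F)]$ becomes the classical $r$-fold Stark regulator at the places $w_{1},\dots,w_{r}$. Since the $v_{j}$ split completely in $H$ and $V\subsetneq S$, the Dirichlet-type rank computation shows that this regulator is a nondegenerate pairing on the $\chi$-isotypic piece of $\Lambda_{H,S,T}\otimes\mathbb{R}$, which is exactly the piece where the leading term of $\Theta_{H,S,T}(s)$ is supported. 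Injectivity then gives uniqueness of $\epsilon_{H,S,T,V}$.

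For existence, I would follow the philosophy advertised in the introduction and construct $\epsilon_{H,S,T,V}$ from a Shintani datum. Concretely, one fixes a Shintani-type simplicial cone decomposition of the totally positive cone of $F\otimes_{\mathbb{Q}}\mathbb{R}$ adapted to the ray class data entering $\Theta_{H,S,T}(s)$, writes the Stickelberger function as a finite $\mathbb{Z}$-linear combination of Shintani zeta functions attached to these cones, and reads off the coefficient of $s^{r}$ in the expansion at $s=0$. That coefficient should organize itself as a sum of $r$-fold tensors of explicit $S$-units of $H$ paired with group elements, yielding a candidate element in $(\bigotimes_{\mathbb{Z}}^{r}\mathcal{O}_{H,S,T}^{\times})\otimes_{\mathbb{Z}}\mathbb{Z}[{\rm Gal}(H/F)]$ whose image under $R_{\infty}$ matches the required leading term by construction.

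The hard part is showing that this candidate actually lands in the submodule $\Lambda_{H,S,T}$ and not merely in its rational hull, and that the putative $S$-units it packages are genuine units rather than formal symbols carrying only the correct logarithmic information. This integrality question is precisely what the Rubin-Stark conjecture asserts and what remains open in general; the whole point of the Shintani data axiomatization seems to be to isolate this as a separate existence problem (the existence of a suitable Shintani datum), after which the conjecture would follow by the general machinery developed in later sections of the paper. Any serious attempt at a full proof must therefore either restrict to cases where units can be produced directly (e.g.\ $F=\mathbb{Q}$, $r=1$, or situations accessible via Gross--Stark and Dasgupta--Spiess constructions) or exhibit a new mechanism guaranteeing the required integrality of the Shintani output.
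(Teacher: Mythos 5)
The statement you were asked to ``prove'' is labelled \textbf{Conjecture} \ref{Conj:RubinStark} in the paper, and the paper does not prove it: it is (a reformulation of) the Rubin--Stark conjecture, which is open in general. The only thing the paper establishes in connection with this statement is that its formulation in terms of the lattice $\Lambda_{H,S,T}\subset(\bigotimes_{\mathbb{Z}}^{r}\mathcal{O}_{H,S,T}^{\times})\otimes_{\mathbb{Z}}\mathbb{Z}[{\rm Gal}(H/F)]$ is equivalent to Rubin's Conjecture B, and this is done via Proposition \ref{prop:wedgeM_iso_starM-1}, which exhibits an explicit isomorphism $P$ between Rubin's lattice $\wedge_{0}^{r}M$ and $(M,G)_{\star}^{r}$. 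Conjecture \ref{Conj:RubinStark} is then taken as a standing hypothesis in the paper's main Conjecture \ref{Conj:mainLastSection}, not as a target of proof.

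Your own proposal, to your credit, acknowledges at the end that existence is precisely what Rubin--Stark asserts and is open; but the bulk of your sketch reads as if a Shintani-cone decomposition of $\Theta_{H,S,T}(s)$ could be made to yield the required element of $\Lambda_{H,S,T}$, and that is exactly the step that fails. The paper's Shintani-datum machinery (Sections \ref{sec:Construction} and \ref{sec:ShintaniDatumBLT}) produces an element $Q(B,\mathcal{L},\vartheta)\in D/I_{F^{\times}}D$ whose image under the reciprocity map recovers $\Theta_{K,S,T}$, and this suffices for the \emph{order-of-vanishing} part of Gross's conjecture (Theorem \ref{thm:GrossFromShintani}); it does not and cannot produce genuine global $S$-units, which is the content of Rubin--Stark. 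The uniqueness half of your argument is fine and is the standard one: decomposing by characters $\chi$, the regulator pairing is nondegenerate on the $\chi$-pieces with $r_{S}(\chi)=r$ by the $S$-unit theorem (using that the $v_{j}$ split completely and $V\subsetneq S$), and $\Lambda_{H,S,T}$ is cut out precisely to kill the pieces with $r_{S}(\chi)>r$, where the limit $\lim_{s\to0}s^{-r}\Theta_{H,S,T}(s)$ also vanishes. But a proof of the statement as a whole does not exist in the paper, and you should have flagged at the outset that the item is a conjecture rather than a theorem, rather than framing the latter two-thirds of the proposal as a route to a proof.
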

The element $\epsilon_{H,S,T,V}\in\Lambda_{H,S,T}$ in Conjecture
\ref{Conj:RubinStark} is called \emph{Rubin-Stark element}.

\subsection{\label{sub:Refinement-of-GLTC}A refinement of Gross's leading term
conjecture}

Let $(H/F,S,T,(v_{i})_{i},(w_{i})_{i},)$ be as in Section \ref{sub:Rubin-Stark-conjecture}.
Put $V=\{v_{1},\dots,v_{r}\}$. Let $K$ be a finite abelian extension
of $F$ unramified outside $S$ such that $\mathcal{O}_{K,S,T}^{\times}$
is torsion free and $H\subset K$. Put $G={\rm Gal}(K/F)$. By a result
of Deligne-Ribet \cite{MR579702}, $\Theta_{K,S,T}$ is in $\mathbb{Z}[G]$.
Let $G_{v}\subset G$ be the decomposition group at a place $v$ of
$F$. For $1\leq j\leq r$, we denote by $f_{j}$ the natural composite
map 
\[
\mathcal{O}_{H,S,T}^{\times}\hookrightarrow H^{\times}\hookrightarrow H_{w_{j}}^{\times}=F_{v_{j}}^{\times}\xrightarrow{{\rm rec}}G_{v_{j}}\hookrightarrow G.
\]
For an abelian group $A$ and a subgroup $U$ of $A$, we write $I_{U,A}$
for the kernel of the projection $\mathbb{Z}[A]\to\mathbb{Z}[A/U]$.
We put
\begin{align*}
I_{H}' & =I_{{\rm Gal}(K/H),G}\subset\mathbb{Z}[G]\\
D' & =\prod_{v\in V}I_{G_{v},G}\subset\mathbb{Z}[G].
\end{align*}
The (usual) regulator 
\[
R_{H/F,S,T,G}:(\bigotimes_{\mathbb{Z}}^{r}\mathcal{O}_{H,S,T}^{\times})\otimes_{\mathbb{Z}}\mathbb{Z}[{\rm Gal}(H/F)]\to D'/I_{H}'D'
\]
 is a homomorphism defined by 
\[
R_{H/F,S,T,G}(u_{1}\otimes\cdots\otimes u_{r}\otimes[\tau])=[\tau]\prod_{j=1}^{r}([f_{j}(u_{j})]-[1]).
\]
The Gross's leading term conjecture is as follows.
\begin{conjecture}
[Gross's leading term conjecture]\label{Conjecute:Gross}We have
\[
\Theta_{K,S,T}\equiv R_{H/F,S,T,G}(\epsilon_{H,S,T,V})\ \pmod{I_{H}'D'}.
\]

\end{conjecture}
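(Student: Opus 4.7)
The plan is to leverage the theory of Shintani data developed earlier in this paper. As the abstract indicates, the order of vanishing part of Gross's conjecture already follows from the existence of a Shintani datum, so I would take this as the foundational ingredient. Specifically, I would first verify that $\Theta_{K,S,T}$ lies in $D' = \prod_{v\in V}I_{G_{v},G}$, i.e.\ that it vanishes to order at least $r$ along the split primes $v_1,\dots,v_r$. This is precisely an order-of-vanishing assertion, and via a Shintani-theoretic decomposition of $\Theta_{K,S,T}$ it should reduce to the observation that each $v_i$ splits completely in $H$, so the local contributions at $v_i$ factor through $G_{v_i}$ and each contributes a factor in $I_{G_{v_i},G}$.

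Next, I would use a Shintani datum to give an \emph{explicit} formula for a candidate Rubin-Stark element $\epsilon_{H,S,T,V}\in\Lambda_{H,S,T}$. The heuristic is that a Shintani decomposition of a fundamental domain for the action of the totally positive $(S,T)$-units on a suitable cone gives a finite-sum expression for $\Theta_{H,S,T}(s)$; the order-$r$ Taylor coefficient at $s=0$ then naturally lives in $(\bigotimes_{\mathbb{Z}}^{r}\mathcal{O}_{H,S,T}^{\times})\otimes_{\mathbb{Z}}\mathbb{Z}[\mathrm{Gal}(H/F)]$, and its image under $R_\infty$ matches the leading term demanded by Conjecture \ref{Conj:RubinStark}. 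The non-trivial content is that this element actually lies in the submodule $\Lambda_{H,S,T}$, which one checks by tracking the Shintani cone decomposition through the definition of $\Lambda_{H,S,T}$ given in Section \ref{sec:ZGmodules}.

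Having such an explicit candidate $\epsilon_{H,S,T,V}$, I would then compute $R_{H/F,S,T,G}(\epsilon_{H,S,T,V})$ directly from the Shintani formula. The key local fact is that for any $u\in\mathcal{O}_{H,S,T}^{\times}$, the element $[f_j(u)]-[1]\in I_{G_{v_j},G}$ reflects the reciprocity image of $u$ at $v_j$, and Shintani-type congruences relate these images to the local pieces of the Stickelberger element at $K$. Plugging the Shintani formula for $\epsilon_{H,S,T,V}$ into $R_{H/F,S,T,G}$ and collecting terms cone-by-cone should reproduce the Shintani expansion of $\Theta_{K,S,T}$ modulo $I_H'D'$, giving the desired congruence.

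The main obstacle is the compatibility between the \emph{two levels} $H\subset K$: the Stickelberger element $\Theta_{K,S,T}$ lives at level $K$ while $\epsilon_{H,S,T,V}$ is defined at level $H$, and the congruence forces the $K$-level Shintani data to descend coherently to $H$-level data under $\mathbb{Z}[G]\to\mathbb{Z}[\mathrm{Gal}(H/F)]$. This is a genuine distribution-relation problem for Shintani data and is precisely the place where a naive attempt breaks down; I expect the hardest work to consist of axiomatizing exactly the distribution/change-of-level property a Shintani datum must satisfy in order to make this descent rigorous, and then verifying it for the Shintani data constructed in the paper. The statement also presupposes the existence of $\epsilon_{H,S,T,V}$, so a complete argument either constructs it simultaneously from the Shintani datum (as sketched above) or assumes Conjecture \ref{Conj:RubinStark}.
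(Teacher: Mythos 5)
The statement you are attempting to prove is Gross's leading term conjecture (Conjecture \ref{Conjecute:Gross}), which is an \emph{open conjecture}; the paper does not prove it, and there is no ``paper's own proof'' to compare against. What the paper actually establishes, conditionally on the existence of an $(S,T,V,J)$-Shintani datum (and unconditionally under condition (C1), via the datum $(B,\mathcal{L},\vartheta)$), is only the much weaker \emph{order of vanishing} statement $\Theta_{K,S,T}\in D'=\prod_{v\in V}I_{G_v,G}$ (Conjecture \ref{Conj:VanishGross}; see Theorem \ref{thm:GrossFromShintani} and Corollary \ref{cor:GrossFromC1}). Beyond that, the paper \emph{proposes} a further refinement, Conjecture \ref{Conj:main} (equivalently Conjecture \ref{Conj:mainLastSection}), asserting $f_2(\hat\Theta_{S,T,V,J})=\hat R_{H/F,S,T,J}(\epsilon_{H,S,T,V})$, which would imply Conjecture \ref{Conjecute:Gross}; that refinement is also left unproved.

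Your sketch is reasonable as a description of the obstruction, but it should not be read as a proof. Your first step (membership of $\Theta_{K,S,T}$ in $D'$) does align with what the paper proves via Shintani data. Your second and third steps, however, are aspirational: the element $\epsilon_{H,S,T,V}$ is itself only conjectural (its existence is Conjecture \ref{Conj:RubinStark}, the Rubin--Stark conjecture, which the paper assumes rather than constructs), and the congruence $R_{H/F,S,T,G}(\epsilon_{H,S,T,V})\equiv\Theta_{K,S,T}\pmod{I_H'D'}$ is exactly the open content. You correctly identify the $H\subset K$ change-of-level compatibility as the crux, but the paper's response to this difficulty is not to resolve it; rather, it lifts both $\Theta_{K,S,T}$ and the regulator to $\mathbb{Z}[N_F]$ via the maps $f_1,\dots,f_4$ and repackages the open content as a single identity in $D/I_H D$, refining rather than proving the conjecture. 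A fair self-assessment of your proposal would acknowledge that it recovers the proved $D'$-membership, correctly locates where real work remains, and otherwise offers a heuristic, not an argument.
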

The original form of this conjecture was given by Gross \cite{MR931448}
and refined by many authors. This conjecture is one of such a refinement
by the ideas of Tate \cite{MR2088717}, Aoki \cite{MR2144953} and
Burns \cite{MR2336038}, and equivalent to MRS$(K/H/F,S,T,\emptyset,V)$
in \cite{BKS}. Since $R_{H/F,S,T,G}(\epsilon_{H,S,T,V})\in\prod_{v\in V}I_{G_{v},G}$,
this conjecture implies the following conjecture.
\begin{conjecture}
[The order of vanishing part of Gross conjecture]\label{Conj:VanishGross}We
have
\[
\Theta_{K,S,T}\in D'.
\]

\end{conjecture}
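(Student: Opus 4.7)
The plan is to deduce the containment $\Theta_{K,S,T} \in D'$ from the existence of a Shintani datum adapted to the arithmetic data $(F, S, T, V)$, in the spirit of the general implication announced in the introduction. I would first fix such a Shintani datum $\Phi$: roughly, a finite collection of rational polyhedral cones in the totally positive cone of $F \otimes_{\mathbb{Q}} \mathbb{R}$ together with auxiliary combinatorial data, equivariant under the action of a sufficiently large subgroup of totally positive $V$-units of $F$, whose associated Shintani-type zeta function recovers $\Theta_{K,S,T}(s)$. Specializing at $s = 0$ then expresses $\Theta_{K,S,T}$ as an explicit finite sum of group-ring elements indexed by lattice points in the cones of $\Phi$, where the group element attached to a lattice point $x$ is determined by the congruence class of $x$ modulo the $T$-conductor via Artin reciprocity.

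The next step is to rewrite this finite sum using the equivariance property of $\Phi$. For each place $v \in V$, a totally positive $V$-unit which is a local unit at every finite place of $S$ other than $v$ gives rise under the local reciprocity map at $v$ to an element of $G_v$; the equivariance of the cone decomposition at $v$ allows one to factor out, from each cone contribution, a term of the form $[g u_v] - [g]$ with $u_v \in G_v$, which by definition lies in $I_{G_v, G}$. The hypothesis $V \neq S$ together with Dirichlet's unit theorem supplies $r$ totally positive $V$-units whose valuation vectors at $v_1, \dots, v_r$ are linearly independent, so the argument applies simultaneously at all $r$ places. Iterating yields
\[
\Theta_{K,S,T} \in \prod_{v \in V} I_{G_v, G} = D',
\]
as desired.

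The main obstacle is the construction of a Shintani datum with $r$-fold equivariance: one needs a rational cone decomposition of the totally positive cone that is simultaneously stable under the action of a rank-$r$ subgroup of totally positive $V$-units, fine enough that the auxiliary combinatorial data separate congruence classes modulo the $T$-conductor, and compatible with the sign conventions that force the associated Stickelberger sum to lie in $\mathbb{Z}[G]$. This is essentially the combinatorial content of the Dasgupta--Spiess construction, and it is what the notion of Shintani datum is designed to axiomatize; once $\Phi$ is available, the deduction above becomes a formal manipulation in which all analytic content has been absorbed into the equivariance property of $\Phi$.
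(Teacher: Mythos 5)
Your proposal captures the broad strategy of the paper---namely, that the containment $\Theta_{K,S,T}\in D'$ should follow from the existence of a suitably equivariant Shintani-type decomposition---but it glosses over the two technical points that carry all of the content, and it omits a hypothesis that the paper cannot avoid.

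First, the paper does not prove Conjecture~\ref{Conj:VanishGross} unconditionally; it proves it only for triples $(S,T,V)$ satisfying condition~(C1), which imposes nontrivial constraints on $T$ (disjointness of residue characteristics, degree-one primes, etc.). These constraints are used precisely to guarantee the integrality $\zeta_{f,y}(\bm 0)\in\mathbb{Z}$ of the partial zeta values attached to the cone/test-function data, which in turn is what makes the Shintani datum $(B,\mathcal{L},\vartheta)$ land in $\mathbb{Z}[N_F]$ rather than $\mathbb{Q}[N_F]$. Your sketch silently assumes such a datum exists for the given $(S,T,V)$; the paper's construction only works under (C1), and that is where the analytic input sits.

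Second, the ``iterating'' step---factoring out a $([gu_{v_1}]-[g])$ at $v_1$, then at $v_2$, and so on up to $v_r$---is not a formal iteration. After one such factorization the remaining sum is no longer of the same shape, and one cannot simply repeat. The paper resolves this by a genuinely homological device: it works with a functor $\mathcal{F}$ on $\mathbf{Sub}(V)^{\rm op}$, imposes the vanishing $H_i(F^\times,\mathcal{F}W)=0$ for $i>0$ and the condition that restrictions $r_W^V(\bar x)$ lie in augmentation submodules for every proper $W\subsetneq V$, and then builds a compatible system $(a_0,\dots,a_r,b_1,\dots,b_{r+1})$ in a double complex so that $\varphi(a_0)-\partial_v(b_1)$ lands in $D=\prod_{v\in V}I_{N_v,N_F}$ \emph{simultaneously} for all $v\in V$. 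The simultaneous vanishing at all $r$ places is exactly what Proposition~\ref{prop:ExactHor} (horizontal exactness of the $R$-complex) and the compatible-system construction buy you, and none of it is visible in your sketch. Relatedly, your invocation of ``$r$ totally positive $V$-units with linearly independent valuation vectors at $v_1,\dots,v_r$'' misidentifies the source of the equivariance: the places $v_j$ may be archimedean (where ``valuation'' degenerates to a sign), and the relevant module is $\mathcal{O}_{F,S}^\times$ acting through the Shintani cocycle $\Xi_{v,e}$ and the signed fundamental domain of Theorem~\ref{thm:SignedFD}, not a rank-$r$ subgroup of $V$-units chosen via Dirichlet. In short: the proposal correctly names the Shintani-datum strategy but omits both the integrality hypothesis (C1) and the compatible-system/double-complex mechanism that replaces the naive iteration; without those, the argument does not close.
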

Fix an open subgroup $J$ of $\prod_{v\in S}F_{v}^{\times}$ which
can be written as a direct product of open subgroups 
\[
J_{v}\subset F_{v}^{\times}\ \ \ (v\in S).
\]
We denote by $\Upsilon(J)$ the set of finite abelian extensions $E$
of $F$ unramified outside $S$ such that $J\subset\ker(\prod_{v\in S}F_{v}^{\times}\xrightarrow{{\rm rec}}{\rm Gal}(E/F))$.
We assume that $H\in\Upsilon(J)$. For a place $v\notin S$ of $F$,
put $J_{v}=O_{v}^{\times}$. For a place $v$ of $F$, put $N_{v}^{(J)}=F_{v}^{\times}/J_{v}$.
Put 
\[
N_{F}^{(J)}=\mathbb{A}_{F}^{\times}/\prod_{v}J_{v}=\prod_{v}{}^{'}N_{v}^{(J)}
\]
where $\mathbb{A}_{F}^{\times}$ is the idele group of $F$ and $v$
runs all places of $F$. If there is no risk of confusion, we simply
write $N_{F}$ for $N_{F}^{(J)}$ and $N_{v}$ for $N_{v}^{(J)}$.

Put
\begin{align*}
D & =\prod_{v\in V}I_{N_{v},N_{F}}\subset\mathbb{Z}[N_{F}]\\
I_{H} & =\ker(\mathbb{Z}[N_{F}]\xrightarrow{{\rm rec}}\mathbb{Z}[{\rm Gal}(H/F)])\\
I_{F^{\times}} & ={\rm ker}(\mathbb{Z}[N_{F}]\to\mathbb{Z}\otimes_{\mathbb{Z}[F^{\times}]}\mathbb{Z}[N_{F}]).
\end{align*}
For $1\leq j\leq r$, we denote by $\eta_{j}$ the natural composite
map $\mathcal{O}_{H,S,T}^{\times}\hookrightarrow H^{\times}\hookrightarrow H_{w_{j}}^{\times}=F_{v_{j}}^{\times}\to N_{v_{j}}\hookrightarrow N_{F}$.
We define the (enhanced) regulator map 
\[
\hat{R}_{H/F,S,T,J}:(\bigotimes_{\mathbb{Z}}^{r}\mathcal{O}_{H,S,T}^{\times})\otimes_{\mathbb{Z}}\mathbb{Z}[{\rm Gal}(H/F)]\to D/I_{H}D
\]
by
\[
u_{1}\otimes\cdots\otimes u_{r}\otimes[\sigma]\mapsto[\bar{\sigma}]\prod_{j=1}^{r}\left([\eta_{j}(u_{j})]-[1]\right)
\]
where $\bar{\sigma}\in N_{F}$ is any element of ${\rm rec}^{-1}(\sigma)$.
If $K\in\Upsilon(J)$ then we have
\[
R_{H/F,S,T,G}=\varphi\circ\hat{R}_{H/F,S,T,J}
\]
where $\varphi:\mathbb{Z}[N_{F}^{(J)}]\to\mathbb{Z}[G]$ is a homomorphism
induced by the reciprocity map. 

Fix $J=\prod_{v\in V}J_{v}\subset\prod_{v\in V}F_{v}^{\times}$. Assume
that $K\in\Upsilon(J)$. See the diagram
\[
\xymatrix{\mathbb{Z}[N_{F}]/I_{F^{\times}}D\ar^{f_{1}}[r]\ar^{f_{2}}[d] & \mathbb{Z}[G]\ar_{f_{3}}[d]\\
\mathbb{Z}[N_{F}]/I_{H}D\ar_{f_{4}}[r] & \mathbb{Z}[G]/I_{H}'D'
}
\]

where $f_{1},f_{4}$ are maps induced by the reciprocity map and $f_{2},f_{3}$
are projections. Note that $\Theta_{K,S,T}$ is an element of $\mathbb{Z}[G]$
and $\hat{R}_{H/F,S,T,J}(\epsilon_{H,S,T,V})$ is an element of $\mathbb{Z}[N_{F}]/I_{H}D$.
Conjecture \ref{Conjecute:Gross} is equivalent to the statement
\[
f_{3}(\Theta_{K,S,T})=f_{4}(\hat{R}_{H/F,S,T,J}(\epsilon_{H,S,T,V})).
\]
In this paper, under some conditions, we construct a special element
$\hat{\Theta}_{S,T,V,J}$ of $\mathbb{Z}[N_{F}]/I_{F^{\times}}D$
such that $f_{1}(\hat{\Theta}_{S,T,V,J})=\Theta_{K,S,T}$. Furthermore,
we propose a conjecture (see Conjecture \ref{Conj:main} below)
\[
f_{2}(\hat{\Theta}_{S,T,V,J})=\hat{R}_{H/F,S,T,J}(\epsilon_{H,S,T,V}).
\]
In Section \ref{sec:Construction}, we introduce the notion of an
$(S,T,V,J)$-\emph{Shintani datum}. An \emph{$(S,T,V,J)$}-Shintani
datum\emph{ }is a triple $(\mathcal{F},\varphi,x)$ satisfying some
conditions. Let $(\mathcal{F},\varphi,x)$ be an $(S,T,V,J)$-Shintani
datum. In Section \ref{sec:Construction}, we construct an element
$Q(\mathcal{F},\varphi,x)$ of $D/I_{F^{\times}}D$ such that
\[
f_{1}(Q(\mathcal{F},\varphi,x))=\Theta_{K,S,T}
\]
for all $K\in\Upsilon(J)$. This gives a following theorem (see Theorem
\ref{thm:GrossFromShintani}).
\begin{thm}
If an $(S,T,V,J)$-Shntani datum exists then Conjecture \ref{Conj:VanishGross}
is true for $K\in\Upsilon(J)$.
\end{thm}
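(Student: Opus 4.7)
The plan is to deduce the theorem as a formal consequence of the construction discussed just above its statement, together with a compatibility between the regulator-like ideals $D$ and $D'$ under the reciprocity map. Concretely, the assumption supplies an $(S,T,V,J)$-Shintani datum $(\mathcal{F},\varphi,x)$, and Section \ref{sec:Construction} attaches to it an element $Q(\mathcal{F},\varphi,x)\in D/I_{F^{\times}}D$ with the key property
\[
f_{1}(Q(\mathcal{F},\varphi,x))=\Theta_{K,S,T}
\]
for every $K\in\Upsilon(J)$. Taking this construction as a black box, the entire burden of the proof of the stated theorem reduces to confirming that $f_{1}$, viewed as a map out of the full quotient $\mathbb{Z}[N_{F}]/I_{F^{\times}}D$, sends the submodule $D/I_{F^{\times}}D$ into $D'\subset\mathbb{Z}[G]$; this is what is needed to conclude $\Theta_{K,S,T}\in D'$.

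To verify that inclusion, I would unwind the definitions of the two products. Because $K\in\Upsilon(J)$, the local component $J_{v}$ lies in the kernel of the reciprocity map $F_{v}^{\times}\to G$, so the global reciprocity map factors through $N_{F}=\mathbb{A}_{F}^{\times}/\prod_{v}J_{v}$ and restricts to a map $N_{v}\to G_{v}$ for every place $v$. The induced ring homomorphism $\mathrm{rec}\colon\mathbb{Z}[N_{F}]\to\mathbb{Z}[G]$ therefore carries $I_{N_{v},N_{F}}$ into $I_{G_{v},G}$: an additive generator $[n x]-[x]$ of $I_{N_{v},N_{F}}$ with $n\in N_{v}$, $x\in N_{F}$ is sent to $[\mathrm{rec}(n)\,\mathrm{rec}(x)]-[\mathrm{rec}(x)]$, which lies in $I_{G_{v},G}$ because $\mathrm{rec}(n)\in G_{v}$. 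Multiplying over $v\in V$ gives $\mathrm{rec}(D)\subseteq D'$, and since $F^{\times}$ lies in the kernel of the global reciprocity map, $\mathrm{rec}$ descends to $f_{1}$. Thus $f_{1}$ restricts to a map $D/I_{F^{\times}}D\to D'$.

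Combining the two ingredients, $\Theta_{K,S,T}=f_{1}(Q(\mathcal{F},\varphi,x))\in f_{1}(D/I_{F^{\times}}D)\subseteq D'$, which is exactly Conjecture \ref{Conj:VanishGross} for the chosen $K\in\Upsilon(J)$. The main obstacle in this argument is not in this section at all but rather in the construction of $Q(\mathcal{F},\varphi,x)$ in Section \ref{sec:Construction}; everything else here is a short diagram chase and a formal check that reciprocity respects the ideals cutting out the order of vanishing. Once the construction is in hand, the deduction requires no further analytic or cohomological input, which is why the existence of a Shintani datum suffices.
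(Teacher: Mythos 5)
Your decomposition matches the paper's: show (i) that $f_{1}$ carries $D/I_{F^{\times}}D$ into $D'$ and (ii) that $f_{1}(Q(\mathcal{F},\varphi,x))=\Theta_{K,S,T}$, then combine. Your verification of (i) — reciprocity restricted to $N_{v}$ lands in $G_{v}$, hence $I_{N_{v},N_{F}}$ maps into $I_{G_{v},G}$, hence $D$ into $D'$, and $F^{\times}\subset\ker(\mathrm{rec})$ lets the map descend past $I_{F^{\times}}$ — is correct and is exactly the containment the paper uses.

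The one point you should not leave as a black box is (ii). You cite it as a property supplied by Section \ref{sec:Construction}, but in the paper that section never states $f_{1}(Q(\mathcal{F},\varphi,x))=\Theta_{K,S,T}$ as a standalone result; the introduction asserts it, but the body only establishes it inside the proof of the very theorem you are proving, so invoking it without proof risks circularity. The missing piece is short: a representative of $Q(\mathcal{F},\varphi,x)$ has the form $\varphi(a_{0})-\partial_{v}(b_{1})$ where $a_{0}$ is a lift of $x$, so the defining property of an $(S,T,V,J)$-Shintani datum gives $\mathrm{rec}(\varphi(a_{0}))=\Theta_{K,S,T}$ for $K\in\Upsilon(J)$; and $\partial_{v}(b_{1})$ lies in $I_{F^{\times}}$ (the image of $\partial_{v}\colon R_{1,r}\to R_{0,r}$ sits inside the augmentation-ideal part of $R(V)$ because $\mathcal{I}_{\bullet}\to\mathbb{Z}$ is exact), and $\mathrm{rec}$ annihilates $I_{F^{\times}}$ since $F^{\times}$ is in the kernel of global reciprocity. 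Hence $f_{1}(Q(\mathcal{F},\varphi,x))=\mathrm{rec}(\varphi(a_{0}))-\mathrm{rec}(\partial_{v}(b_{1}))=\Theta_{K,S,T}$. With that verification inserted your argument is complete and is essentially the paper's proof, reorganized around the map $f_{1}$ rather than around explicit representatives.
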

Thus the order of vanishing part of Gross's conjecture is reduced
to the existence of an $(S,T,V,J)$-Shintani datum. The author hopes that
this axiomatic approach will lead to the full solution of the vanishing
order part of Gross's conjecture in further investigations.

We say that a prime ideal $\eta$ is degree one if $N\eta$ is a rational
prime. Let us consider the following conditions.
\begin{defn}
We say that $(S,T,V)$ satisfies (C1) if\end{defn}
\begin{itemize}
\item $V\not\supset S_{\infty}$,
\item no prime of $S$ has the same residue characteristic as any prime
in $T$,
\item no two primes in $T$ have the same residue characteristic,
\item $T$ contains at least two prime ideal of degree one, or T contains
at least one prime ideal $\eta$ of degree one such that $N\eta\geq n+2$.
\end{itemize}
If (C1) holds then $\Theta_{E,S,T}$ is in $\mathbb{Z}[{\rm Gal}(E/F)]$
for any abelian extension $E/F$ unramified outside $S$. 

In Section \ref{sec:ShintaniDatumBLT}, for all $J$, we construct
a certain Shintani datum $(B,\mathcal{L},\vartheta)$ under the conditions
that $(S,T,V)$ satisfies (C1). This gives an alternative proof of
the following theorem by Dasgupta and Spiess (see Corollary \ref{cor:GrossFromC1}).
\begin{thm}
[\cite{DasguptaSpiess}]If $(S,T,V)$-satisfies (C1) then Conjecture
\ref{Conj:VanishGross} is true.
\end{thm}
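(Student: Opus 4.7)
The plan is to reduce the statement to the framework already established: since the preceding theorem shows that the existence of an $(S,T,V,J)$-Shintani datum implies Conjecture \ref{Conj:VanishGross} for all $K\in\Upsilon(J)$, it suffices, under condition (C1), to exhibit one such Shintani datum for every admissible open subgroup $J$. Given an arbitrary finite abelian extension $K/F$ unramified outside $S$ for which we wish to prove $\Theta_{K,S,T}\in D'$, one can always shrink $J=\prod_{v\in S}J_{v}$ so that $K\in\Upsilon(J)$, simply by taking each $J_{v}$ to lie in the kernel of the local reciprocity map into $\mathrm{Gal}(K/F)$. Thus the entire content of the corollary reduces to constructing the triple $(B,\mathcal{L},\vartheta)$ announced in the introduction.

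The construction itself would assemble Shintani-type data at the archimedean places with a smoothed adelic distribution at the finite places. The component $B$ is naturally a Shintani cone decomposition of $\prod_{v\in S_{\infty}}F_{v}^{+}$ modulo the action of the totally positive $(S,T)$-units; the first clause of (C1), namely $V\not\supset S_{\infty}$, guarantees at least one infinite place is available to carry such a decomposition. The second component $\mathcal{L}$ would be a distribution on the adelic space built from characteristic functions of the $J_{v}$ at places in $S$ and of $O_{v}^{\times}$ elsewhere, $T$-smoothed via the Euler factors associated to the primes in $T$. Finally, $\vartheta$ would be the associated Shintani zeta function, lifted to $\mathbb{Z}[N_{F}]/I_{F^{\times}}D$.

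Verifying that $(B,\mathcal{L},\vartheta)$ is an $(S,T,V,J)$-Shintani datum splits into two main checks. First, the projection compatibility $f_{1}(Q(B,\mathcal{L},\vartheta))=\Theta_{K,S,T}$ for $K\in\Upsilon(J)$ is a Cassou-Nogu\`es--Deligne--Ribet style evaluation of $T$-smoothed partial zeta functions at $s=0$ via Shintani cones. Second, the integrality axiom --- that $\vartheta$ takes values in $\mathbb{Z}[N_{F}]$ rather than in $\mathbb{Q}[N_{F}]$ --- is exactly what condition (C1) is designed to deliver: the clauses on residue characteristics of $S$ and $T$ and on the existence of a degree-one prime of sufficiently large norm in $T$ are the hypotheses that kill the denominators in the Shintani cocycle, generalising the classical Deligne--Ribet integrality of $\Theta_{E,S,T}$.

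The main obstacle is the vanishing axiom, namely showing that $Q(B,\mathcal{L},\vartheta)$ actually lies in the $|V|$-fold product of augmentation ideals $D=\prod_{v\in V}I_{N_{v},N_{F}}$ rather than merely in $\mathbb{Z}[N_{F}]$. A single Shintani zeta function only naturally lands in $\mathbb{Z}[N_{F}]$, so extracting one augmentation factor per $v_{j}\in V$ requires refining the construction so that cones and local data at each $v_{j}$ admit an alternation that forces a factor in $I_{N_{v_{j}},N_{F}}$. Concretely, I would expect to iterate a Dasgupta-style partial construction once for each $v_{j}$, using the local subgroup $J_{v_{j}}$ at each step to extract a difference of group elements in $N_{v_{j}}$. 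Coordinating these iterations into a single datum that is simultaneously integral, projects correctly onto $\Theta_{K,S,T}$, and lies in $D$ is the combinatorial heart of the argument and the place where the structure of Dasgupta--Spiess is re-expressed in the Shintani-datum framework.
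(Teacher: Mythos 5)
Your reduction is right and matches the paper: Theorem~\ref{thm:GrossFromShintani} says that an $(S,T,V,J)$-Shintani datum yields the conclusion for all $K\in\Upsilon(J)$, and since every finite abelian $K/F$ unramified outside $S$ lies in $\Upsilon(J)$ for $J$ small enough, the corollary follows from constructing $(B,\mathcal{L},\vartheta)$. But the proposal then only gestures at that construction, and in a way that misplaces where the real work lies. You describe the ``vanishing axiom'' as showing that $Q(B,\mathcal{L},\vartheta)$ lands in $D=\prod_{v\in V}I_{N_v,N_F}$, and you propose extracting the augmentation factors one $v_j$ at a time from the final zeta value. That is not what needs to be proved: once $(\mathcal{F},\varphi,x)$ satisfies the axioms of a Shintani datum, $Q(\mathcal{F},\varphi,x)\in D/I_{F^\times}D$ is automatic from the general theory of compatible systems in Section~\ref{sec:Construction}. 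What actually has to be checked is condition~(2) of the definition, namely that for every $W\subsetneq V$ the restriction ${\rm r}^V_W(\bar{x})$ lies in $I\mathcal{F}(W)$, applied to $\vartheta\in B(V)/IB(V)$. The paper's Proposition~\ref{prop:rVanish} does this by a three-case analysis depending on whether the dropped place is finite, infinite with a unit of the prescribed sign pattern, or infinite without one, and it uses the signed fundamental domain cycle $D\in\mathcal{C}(F_{g_0})\cap Y(\eta)$ from Theorem~\ref{thm:SignedFD} in an essential way. None of that appears in your proposal, and without it there is no verified Shintani datum.

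Two further points are missing or confused. First, condition~(1) of a Shintani datum requires $H_i(F^\times,\mathcal{F}W)=0$ for all $i>0$ and all $W$; this is Proposition~\ref{prop:ExactVer} and relies on Lemma~\ref{lem:VanishingHomology} together with $\bar{W}\cap S_\infty\neq S_\infty$, which is exactly where the hypothesis $V\not\supset S_\infty$ from (C1) enters. Your proposal only mentions $V\not\supset S_\infty$ as ``guaranteeing a place to carry a Shintani decomposition,'' which is not the role it plays. Second, you conflate $\vartheta$ with the Shintani zeta value: $\vartheta$ is an element of $B(V)/IB(V)$ built from a signed fundamental domain and the $T$-smoothed ideal combination $\delta_{S,T}$, whereas the integrality that (C1) buys is $\zeta_{f,y}(\mathbf{0})\in\mathbb{Z}$, which is needed to define the natural transformation $\psi$ (and hence $\mathcal{L}_{v,e}=\psi\circ\xi_{v,e}$) with target $R$ instead of $\mathbb{Q}\otimes R$. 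As written, the proposal identifies the correct high-level route but leaves all three verification steps --- the homology vanishing, the restriction condition on $\vartheta$, and the evaluation ${\rm rec}(\mathcal{L}_{v,e}(\theta))=\Theta_{E,S,T}$ --- either unaddressed or addressed at the wrong level of the construction, so it does not constitute a proof.
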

Assume that $(S,T,V)$-satisfies (C1). We define an element $\hat{\Theta}_{S,T,V,J}$
of $D/I_{F^{\times}}D$ by
\[
\hat{\Theta}_{S,T,V,J}=Q(B,\mathcal{L},\vartheta).
\]
We propose a following conjecture (see Conjecture \ref{Conj:mainLastSection}).
\begin{conjecture}
\label{Conj:main}For all $J$, 
\[
f_{2}(\hat{\Theta}_{S,T,V,J})=\hat{R}_{H/F,S,T,J}(\epsilon_{H,S,T,V}).
\]

\end{conjecture}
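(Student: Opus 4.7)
The plan is to deduce Conjecture~\ref{Conj:main} from Gross's leading term conjecture by varying the auxiliary field $K$. In the commutative square of Section~\ref{sub:Refinement-of-GLTC} one has $f_4\circ f_2 = f_3\circ f_1$ and $f_1(\hat{\Theta}_{S,T,V,J})=\Theta_{K,S,T}$, so Conjecture~\ref{Conjecute:Gross} applied to any admissible $K$ immediately gives
\[
f_4\bigl(f_2(\hat{\Theta}_{S,T,V,J})\bigr) \;=\; f_3(\Theta_{K,S,T}) \;=\; f_4\bigl(\hat{R}_{H/F,S,T,J}(\epsilon_{H,S,T,V})\bigr).
\]
Thus the two sides of Conjecture~\ref{Conj:main} coincide in $\mathbb{Z}[G]/I_H' D'$ for every such $K$, and the real task is to promote this coincidence from the quotient to $\mathbb{Z}[N_F]/I_H D$ itself.

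The key auxiliary result I would try to establish is that, as $K$ ranges over finite abelian extensions of $F$ in $\Upsilon(J)$ containing $H$ with $\mathcal{O}_{K,S,T}^{\times}$ torsion-free, the assembled map
\[
\mathbb{Z}[N_F]/I_H D \;\longrightarrow\; \varprojlim_{K}\; \mathbb{Z}[G]/I_H' D'
\]
is injective. By class field theory the inverse limit $\varprojlim_K G$ is the Galois group of the maximal abelian extension of $F$ in $\Upsilon(J)$ and is naturally a quotient of $N_F$; moreover $I_H' D'$ is exactly the image of $I_H D$ under the reciprocity map. Injectivity should then reduce to a profinite approximation statement, provable by analyzing the decreasing filtration of $\ker(\mathbb{Z}[N_F]\to\mathbb{Z}[G])$ against the augmentation ideals $I_{N_v,N_F}$ for $v\in V$ and showing that its intersection with $D$ lies in $I_H D$. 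Granting this lemma, Conjecture~\ref{Conjecute:Gross} for each $K$ forces Conjecture~\ref{Conj:main}.

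The main obstacle is twofold. First, Conjecture~\ref{Conjecute:Gross} itself is open in general, so the argument above produces only a conditional proof; unconditionally it shows that Conjecture~\ref{Conj:main} is equivalent to the collection of Gross leading term conjectures for all $K\in\Upsilon(J)$ containing $H$. Second, Conjecture~\ref{Conj:RubinStark} is required merely to define $\epsilon_{H,S,T,V}$. A strategy to bypass Gross's conjecture would be to construct the Rubin-Stark element directly from the Shintani datum $(B,\mathcal{L},\vartheta)$---a higher-rank analogue of Dasgupta's Shintani-cone formula for Gross-Stark units invoked in the introduction---and match it term by term with $Q(B,\mathcal{L},\vartheta)$. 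The essential difficulty there is analytic: $\epsilon_{H,S,T,V}$ is characterized transcendentally via the leading term $\lim_{s\to 0}s^{-r}\Theta_{H,S,T}(s)$ of the archimedean $L$-function, whereas $\hat{\Theta}_{S,T,V,J}$ is a purely combinatorial object assembled from cones, and any direct bridge would require a Kronecker-limit-type formula for higher-rank Shintani zeta functions that is not currently available.
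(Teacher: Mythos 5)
The statement you are asked to prove is labeled a \emph{conjecture} in the paper, and the paper offers no proof of it --- it is proposed precisely because it is open. There is therefore no ``paper's own proof'' to compare against; the paper only establishes that Conjecture~\ref{Conj:main} \emph{implies} Gross's leading term conjecture (this is what makes it a refinement), and supports the conjecture by showing it has the correct image under $f_1$ (Theorem~\ref{thm:GrossFromShintani}) and that its $r=1$ case matches the Dasgupta--Spiess construction.

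Your proposal runs the implication in the opposite direction: you try to recover the refinement \emph{from} Gross's conjecture by letting $K$ vary. Two concrete problems remain unresolved. First, the argument is conditional on Conjecture~\ref{Conjecute:Gross} for every $K\in\Upsilon(J)$ containing $H$, which is itself open; so at best you obtain a reduction, not a proof. Second, and more seriously for the structure of the argument, the injectivity of
\[
D/I_H D\;\longrightarrow\;\varprojlim_{K}\;D'/I_H' D'
\]
is the entire content of the lift from the quotient to $\mathbb{Z}[N_F]/I_H D$, and you only gesture at it. This is not a routine profinite approximation statement: $N_F^{(J)}$ is not profinite (it contains the restricted product $\prod'_{v\notin S}\mathbb{Z}$), and what you must control is not $\bigcap_K \ker(\mathbb{Z}[N_F]\to\mathbb{Z}[G])$ but the more delicate intersection $D\cap\bigcap_K\bigl(\ker(\mathbb{Z}[N_F]\to\mathbb{Z}[G]) + (\text{preimage of }I_H' D')\bigr)$, and you must show this lies in $I_H D$. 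That the ideals $D$ and $I_H$ are generated by augmentation elements attached to different subgroups ($\prod_{v\in V}N_v$ versus $\ker(\mathrm{rec})$) makes this a genuine commutative-algebra problem with no obvious handle. Without that lemma, passing from $f_4\circ f_2(\hat\Theta)=f_4\circ\hat R(\epsilon)$ to $f_2(\hat\Theta)=\hat R(\epsilon)$ is unjustified. Note also that if the injectivity lemma were true, you would have shown Conjecture~\ref{Conj:main} is \emph{equivalent} to the family of Gross conjectures over all $K\in\Upsilon(J)$, which would be a nontrivial new result in its own right (the paper claims only one direction); this is worth flagging as a possible separate project, but it is not a proof of the conjecture.

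Your final paragraph is the most honest part: an unconditional proof would indeed require constructing $\epsilon_{H,S,T,V}$ directly from the Shintani datum and matching against $Q(B,\mathcal L,\vartheta)$, and the missing ingredient is exactly a higher-rank Kronecker-limit-type identity connecting the combinatorial cone data to the archimedean leading term. That is the genuine open problem the conjecture encodes; your proposal names it correctly but does not resolve it.
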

This is a refinement of Conjecture \ref{Conjecute:Gross}.

\subsection{Conjectural construction of localized Rubin-Stark element.}

Let $\epsilon_{H,S,T,V}$ be the Rubin-Stark element. The natural
composite maps
\[
H^{\times}\hookrightarrow H_{w_{j}}^{\times}=F_{v_{j}}^{\times}\to N_{v_{j}}^{(J)}
\]
induce a map
\[
{\rm loc}_{J}:\bigotimes_{j=1}^{r}\mathcal{O}_{H,S,T}^{\times}\otimes_{\mathbb{Z}}\mathbb{Z}[{\rm Gal}(H/F)]\to\bigotimes_{j=1}^{r}N_{v_{j}}^{(J)}\otimes_{\mathbb{Z}}\mathbb{Z}[{\rm Gal}(H/F)].
\]
Put
\[
{\rm loc}=(\lim_{\substack{\leftarrow\\
J
}
}{\rm loc}_{J}):\bigotimes_{j=1}^{r}\mathcal{O}_{H,S,T}^{\times}\otimes_{\mathbb{Z}}\mathbb{Z}[{\rm Gal}(H/F)]\to\lim_{\substack{\leftarrow\\
J
}
}\left(\bigotimes_{j=1}^{r}N_{v_{j}}^{(J)}\otimes_{\mathbb{Z}}\mathbb{Z}[{\rm Gal}(H/F)]\right).
\]
In Section \ref{sec:Conjecture}, we give a conjectural exact formula
for
\[
{\rm loc}(\epsilon_{H,S,T,V})\in\lim_{\substack{\leftarrow\\
J
}
}\left(\bigotimes_{j=1}^{r}N_{v_{j}}^{(J)}\otimes_{\mathbb{Z}}\mathbb{Z}[{\rm Gal}(H/F)]\right)
\]
by using $\hat{\Theta}_{S,T,V,J}$ (Conjecture \ref{Conj:main2LastSection}).
The conjectural construction of Gross-Stark units due to Dasgupta
\cite{MR2420508} and Dasgupta-Spiess \cite{DasguptaSpiess} can be
regarded as the $r=1$ case of our formula.

\emph{Acknowledgements.} The author would like to thank Nobuo Sato
and Takamichi Sano for helpful discussions.

\section*{Notation}

Throughout this paper, we use the same notation as in the introduction.
We fix a totally real field $F$, disjoint finite sets $(S,T)$ of
places of $F$ such that $S\supset S_{\infty}$, a subset $V=\{v_{1},\dots,v_{r}\}\subsetneq S$,
and an open subgroup $J=\prod_{v\in S}J_{v}\subset\prod_{v\in S}F_{v}^{\times}$.
We put $S_{f}=S\setminus S_{\infty}$. We denote by ${\bf Sub}(V)$
the category of subsets of $V$ whose morphism are inclusions. For
$U_{1}\subset U_{2}\subset V$, $r_{U_{1}}^{U_{2}}$ means a unique
element of ${\rm Hom}_{{\bf Sub}(V)^{{\rm op}}}(U_{2},U_{1})$. We
denote by $\mathbf{Mod}(F^{\times})$ the category of $\mathbb{Z}[F^{\times}]$-modules.
Unadorned tensor products are over $\mathbb{Z}$, except for Section
\ref{sec:Construction}. Unadorned tensor products are over $\mathbb{Z}[F^{\times}]$
in Section \ref{sec:Construction}. For a $\mathbb{Z}[F^{\times}]$-module
$M$, we denote $\ker(M\to M\otimes_{\mathbb{Z}[F^{\times}]}\mathbb{Z})$
by $IM$. Any map induced by the reciprocity map is denoted by ${\rm rec}.$
For $x\in F^{\times}$, we define ${\rm sgn}(x)\in\{\pm1\}$ by
\[
{\rm sgn}(x)=\prod_{v\in S_{\infty}}{\rm sgn}(\left|x\right|_{v}).
\]

\section{\label{sec:Construction}$(S,T,V,J)$-Shintani data and construction
of $Q(\mathcal{F},\varphi,x)$.}

In this section, if we omit the subscript of $\otimes$, it means
that the tensor product is over $\mathbb{Z}[F^{\times}]$. Let $R$
be a functor from ${\bf Sub}(V)^{{\rm op}}$ to ${\bf Mod}(F^{\times})$
defined by
\[
R(W)=\mathbb{Z}[N_{F}/\prod_{v\in V\setminus W}N_{v}].
\]

\begin{defn}
Let $\mathcal{F}$ be a functor from ${\bf Sub}(V)^{{\rm op}}$ to
${\bf Mod}(F^{\times})$, $\varphi$ a natural transform from $\mathcal{F}$
to $R$, and $x$ an element of $\mathcal{F}(V)/I\mathcal{F}(V)$.
We say that the triple $(\mathcal{F},\varphi,x)$ is a Shintani datum
if the following conditions hold.\end{defn}
\begin{enumerate}
\item $H_{i}(F^{\times},\mathcal{F}W)=0$ for all $i>0$ and $W\subset V$.
\item For all $W\subsetneq V$, ${\rm r}_{W}^{V}(\bar{x})\in I\mathcal{F}(W)$
where $\bar{x}\in\mathcal{F}(V)$ is a lift of $x$.\end{enumerate}
\begin{defn}
A Shintani datum $(\mathcal{F},\varphi,x)$ is an $(S,T,V,J)$-Shintani
datum if 
\[
{\rm rec}(\varphi(\bar{x}))=\Theta_{K,S,T}
\]
for all $K\in\Upsilon(J)$ where ${\rm rec}:\mathbb{Z}[N_{F}]\to\mathbb{Z}[{\rm Gal}(K/F)]$
is the map induced by the reciprocity map and $\bar{x}\in\mathcal{F}V$
is a lift of $x$.
\end{defn}
Let $(\mathcal{F},\varphi,x)$ be a Shintani datum. In this section,
we define an element $Q(\mathcal{F},\varphi,x)\in D/I_{F^{\times}}D$.

\subsection{Definition of $\mathcal{F}(n)$.}

Let $\mathcal{F}$ be a functor from ${\bf Sub}(V)^{{\rm op}}$ to
${\bf Mod}(F^{\times})$. We put
\[
\mathcal{F}(k)=\bigoplus_{\substack{W\subset V\\
\#W=k
}
}\mathcal{F}W.
\]
Let us fix the ordering of $V$. For $W_{1},W_{2}\subset V$, we define
$c(W_{1},W_{2})\in\{0,1,-1\}$ by
\[
c(W_{1},W_{2})=\begin{cases}
(-1)^{i-1} & W_{2}=\{w_{1},\dots,w_{k}\}\text{ and \ensuremath{W_{1}=W_{2}\setminus\{w_{i}\}}}\\
0 & \text{otherwise}
\end{cases}
\]
where $w_{1}<\cdots<w_{k}$. We define a homomorphism $\partial:\mathcal{F}(k)\to\mathcal{F}(k-1)$
by
\[
\partial(x)=\sum_{\substack{U\subset W\\
c(U,W)\neq0
}
}c(U,W){\rm r}_{U}^{W}(x)\ \ \ (x\in\mathcal{F}W).
\]
We have $\partial\circ\partial=0$.

\subsection{Double complexes.}

Let $(\mathcal{F},\varphi,x)$ be a Shintani datum. Let 
\[
\cdots\to\mathcal{I}_{3}\to\mathcal{I}_{2}\to\mathcal{I}_{1}\to\mathcal{I}_{0}\to\mathbb{Z}\to0
\]
 be a free resolution of $\mathbb{Z}$ in the category of $\mathbb{Z}[F^{\times}]$-module.
Put $\mathcal{F}_{i,j}=\mathcal{I}_{i}\otimes\mathcal{F}(j)$ and
$R_{i,j}=\mathcal{I}_{i}\otimes R(j).$

Let us consider the two complexes
\[
\xymatrix{\vdots\ar[d] & \vdots\ar[d] &  & \vdots\ar[d]\\
\mathcal{F}_{2,r}\ar[r]\ar[d] & \mathcal{F}_{2,r-1}\ar[r]\ar[d] & \cdots\ar[r] & \mathcal{F}_{2,0}\ar[r]\ar[d] & 0\\
\mathcal{F}_{1,r}\ar[r]\ar[d] & \mathcal{F}_{1,r-1}\ar[r]\ar[d] & \cdots\ar[r] & \mathcal{F}_{1,0}\ar[r]\ar[d] & 0\\
\mathcal{F}_{0,r}\ar[r] & \mathcal{F}_{0,r-1}\ar[r] & \cdots\ar[r] & \mathcal{F}_{0,0}\ar[r] & 0
}
\]
and
\[
\xymatrix{\vdots\ar[d] & \vdots\ar[d] &  & \vdots\ar[d]\\
R_{2,r}\ar[r]\ar[d] & R_{2,r-1}\ar[r]\ar[d] & \cdots\ar[r] & R_{2,0}\ar[r]\ar[d] & 0\\
R_{1,r}\ar[r]\ar[d] & R_{1,r-1}\ar[r]\ar[d] & \cdots\ar[r] & R_{1,0}\ar[r]\ar[d] & 0\\
R_{0,r}\ar[r] & R_{0,r-1}\ar[r] & \cdots\ar[r] & R_{0,0}\ar[r] & 0.
}
\]
These complexes play an important role in our construction of $Q(\mathcal{F},\varphi,x)$.
We denote the vertical (resp. horizontal) arrows by $\partial_{v}$
(resp. $\partial_{h}$).
\begin{prop}
The vertical sequence
\[
\cdots\to\mathcal{F}_{2,k}\to\mathcal{F}_{1,k}\to\mathcal{F}_{0,k}
\]
is exact for all $k=0,1,\dots,r$.\end{prop}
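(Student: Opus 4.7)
The plan is to recognize the vertical complex as a standard object from group homology. Fix $k \in \{0,1,\dots,r\}$. Since $\mathcal{I}_\bullet \to \mathbb{Z} \to 0$ is a free (hence projective) resolution of $\mathbb{Z}$ in the category of $\mathbb{Z}[F^\times]$-modules, the complex
\[
\cdots \to \mathcal{I}_2 \otimes_{\mathbb{Z}[F^\times]} \mathcal{F}(k) \to \mathcal{I}_1 \otimes_{\mathbb{Z}[F^\times]} \mathcal{F}(k) \to \mathcal{I}_0 \otimes_{\mathbb{Z}[F^\times]} \mathcal{F}(k)
\]
computes $\mathrm{Tor}^{\mathbb{Z}[F^\times]}_\bullet(\mathbb{Z}, \mathcal{F}(k)) = H_\bullet(F^\times, \mathcal{F}(k))$ by definition of group homology. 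In other words, the $i$-th homology of the vertical sequence is precisely $H_i(F^\times, \mathcal{F}(k))$.

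Next I would unwind the direct-sum decomposition. Since $\mathcal{F}(k) = \bigoplus_{W \subset V,\ \#W = k} \mathcal{F}W$ is a \emph{finite} direct sum of $\mathbb{Z}[F^\times]$-modules, and since group homology commutes with finite direct sums (tensor product and homology of complexes both commute with finite direct sums), one has
\[
H_i(F^\times, \mathcal{F}(k)) \;=\; \bigoplus_{\substack{W \subset V\\ \#W = k}} H_i(F^\times, \mathcal{F}W).
\]

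Finally I would invoke the first axiom of a Shintani datum, namely $H_i(F^\times, \mathcal{F}W) = 0$ for all $i > 0$ and all $W \subset V$. Combined with the display above, this gives $H_i(F^\times, \mathcal{F}(k)) = 0$ for every $i \geq 1$, which is exactly the claimed exactness of the vertical sequence at each $\mathcal{F}_{i,k}$ with $i \geq 1$. There is no real obstacle here, as the argument is essentially a tautology once one identifies the complex with the bar-style computation of $H_\bullet(F^\times, -)$ and uses additivity; the only thing to be careful about is that the direct sum in the definition of $\mathcal{F}(k)$ is finite so that homology commutes with it without any hypotheses.
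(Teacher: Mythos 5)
Your proof is correct and follows essentially the same route as the paper: both identify the homology of the vertical complex with $H_i(F^\times, \mathcal{F}(k))$ via the Tor interpretation of a free resolution, reduce to the individual summands $\mathcal{F}W$, and then invoke axiom (1) of a Shintani datum. The only cosmetic difference is that you make the finite-direct-sum step explicit, whereas the paper simply reduces "the claim is equivalent to" the statement for each $W$.
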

\begin{proof}
The claim is equivalent to the statement that
\[
H_{i}(\cdots\to\mathcal{I}_{2}\otimes_{\mathbb{Z}[E_{V}]}\mathcal{F}W\to\mathcal{I}_{1}\otimes\mathcal{F}W\to\mathcal{I}_{0}\otimes\mathcal{F}W\to0)=0
\]
for all $W\subset V$ and $i\geq1$ . We have
\begin{align*}
 & H_{i}(\cdots\to\mathcal{I}_{2}\otimes_{\mathbb{Z}[E_{V}]}\mathcal{F}W\to\mathcal{I}_{1}\otimes\mathcal{F}W\to\mathcal{I}_{0}\otimes\mathcal{F}W\to0)\\
= & {\rm Tor}_{i}^{F^{\times}}(\mathcal{F}W,\mathbb{Z})\\
= & {\rm Tor}_{i}^{F^{\times}}(\mathbb{Z},\mathcal{F}W)\\
= & H_{i}(F^{\times},\mathcal{F}W)
\end{align*}
Thus the claim follows from the definition.\end{proof}
\begin{prop}
\label{prop:ExactHor}The horizontal sequence
\[
R_{k,r}\to R_{k,r-1}\to\cdots\to R_{k,0}\to0
\]
is exact for all $k\in\mathbb{Z}$.\end{prop}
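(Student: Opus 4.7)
The plan is to reduce the exactness statement through three successive flattenings. First, since each $\mathcal{I}_k$ is a free (hence flat) $\mathbb{Z}[F^\times]$-module, the functor $\mathcal{I}_k \otimes_{\mathbb{Z}[F^\times]} -$ preserves exactness, so it suffices to prove that the ``untwisted'' complex
\[
R(r) \to R(r-1) \to \cdots \to R(0) \to 0
\]
is exact at positions $0, 1, \ldots, r-1$ as a complex of abelian groups.

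Second, I would exploit the identification $N_F = \bigoplus_v N_v$, which holds because $J_v = O_v^\times$ for $v \notin S$ forces $\mathbb{A}_F^\times/\prod_v J_v$ to collapse the restricted product of the $F_v^\times/J_v$ into a genuine direct sum. Splitting off the finite set $V$, we obtain a natural isomorphism
\[
R(W) \;\cong\; \mathbb{Z}[\textstyle\prod_{v\in W} N_v] \otimes_{\mathbb{Z}} \mathbb{Z}[\bigoplus_{v\notin V} N_v]
\]
for every $W \subset V$. The horizontal complex thus becomes $C_\bullet \otimes_{\mathbb{Z}} \mathbb{Z}[\bigoplus_{v\notin V} N_v]$, where $C_\bullet$ has terms $\bigoplus_{\#W = k} \mathbb{Z}[N_W]$ with $N_W := \prod_{v\in W} N_v$. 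Since the second factor is $\mathbb{Z}$-free, exactness reduces to that of $C_\bullet$.

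Third, I would identify $C_\bullet$ with the tensor product over $v \in V$ of the two-term augmentation complexes $\mathbb{Z}[N_v] \xrightarrow{\epsilon_v} \mathbb{Z}$ (with $\mathbb{Z}[N_v]$ in degree $1$ and $\mathbb{Z}$ in degree $0$). Each factor has $H_0 = 0$ (since $\epsilon_v$ is surjective) and $H_1 = I_{N_v}$, and all terms are $\mathbb{Z}$-torsion-free, so the K\"unneth formula gives
\[
H_k(C_\bullet) \;=\; \bigoplus_{k_1+\cdots+k_r=k}\, \bigotimes_{v\in V} H_{k_v}(\mathbb{Z}[N_v] \xrightarrow{\epsilon_v} \mathbb{Z}),
\]
which vanishes unless every $k_v = 1$, i.e.\ unless $k = r$. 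This yields the desired exactness at positions $0, 1, \ldots, r-1$ (with the obstruction $H_r(C_\bullet) = \bigotimes_{v\in V} I_{N_v}$ appearing only at the leftmost, unasserted spot).

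The only non-formal step is the sign bookkeeping in the third stage: one must verify that the prescription $c(W\setminus\{w_i\}, W) = (-1)^{i-1}$ used to define $\partial$ implements the standard Koszul sign $(-1)^{\sum_{v < w_i} k_v}$ on the total differential of $\bigotimes_{v\in V} (\mathbb{Z}[N_v]\to\mathbb{Z})$. With a total order on $V$ fixed, this reduces to the observation that on $\mathbb{Z}[N_W]$ the number of degree-$1$ factors strictly preceding $w_i$ is exactly $i-1$; I expect this sign check to be the only point requiring real care, as the rest is formal.
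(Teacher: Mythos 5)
Your proof is correct, but it takes a genuinely different route from the paper's. After the common reduction to the untwisted complex $R(r)\to\cdots\to R(0)\to 0$ (using freeness of $\mathcal{I}_k$), the paper proves exactness at position $m$ by a direct filtration argument: it introduces the decreasing filtration $R^{(k)}(m)=\bigoplus_{\#W=m,\ \{v_1,\dots,v_k\}\subset W}R(W)$ and shows by hand (statement $P(k)$) that any cycle in $R^{(k)}(m)$ can be corrected by a boundary to lie in $R^{(k+1)}(m)$, pushing everything into $R^{(r)}(m)=0$. Your argument instead exhibits the complex as $\Bigl(\bigotimes_{v\in V}(\mathbb{Z}[N_v]\xrightarrow{\epsilon_v}\mathbb{Z})\Bigr)\otimes_{\mathbb{Z}}\mathbb{Z}[\bigoplus_{v\notin V}N_v]$ and invokes the K\"unneth formula, noting that each two-term factor has homology concentrated in degree $1$ (namely $I_{N_v}$, which is $\mathbb{Z}$-free, so there are no Tor corrections), and that the extra $\mathbb{Z}$-free tensor factor does not affect exactness. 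The two are morally the same Koszul-type statement, but your version is more structural: it computes the nonzero homology $H_r=\bigotimes_{v\in V}I_{N_v}$ as a byproduct, and the paper's $P(k)$-argument is exactly what one would write if one unwound the K\"unneth/contraction proof by hand, choice of preimage $b_W$ included. Your sign check at the end is correct (the number of degree-one factors strictly preceding $w_i$ in a chosen total order on $V$ is $i-1$, matching $c(W\setminus\{w_i\},W)=(-1)^{i-1}$), and the identification $N_F\cong\bigoplus_v N_v$ is legitimate since $J_v=O_v^\times$ forces the restricted product to be a direct sum away from $S$, while $S$ itself is finite.
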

\begin{proof}
It is enough to prove the exactness of $R(r)\to R(r-1)\to\cdots\to R(0)\to0$
since $\mathcal{I}_{k}$ is a free $\mathbb{Z}[F^{\times}]$-module.
Fix $0\leq m\leq r-1$. Let us prove the exactness of $R(m+1)\to R(m)\to R(m-1)$.
We define the subspaces of $R(m)$ by
\[
R^{(k)}(m)=\bigoplus_{\substack{W\subset V\\
\#W=m\\
\{v_{1},\dots,v_{k}\}\subset W
}
}R(W)\subset R(m)\ \ \ (k=0,\dots,r).
\]
For $k=0,\dots,r-1$, let $P(k)$ be the following statement:\\
If $a\in R^{(k)}(m)\cap\ker(\partial)$ then there exists $b\in R(m+1)$
such that $a-\partial(b)\in R^{(k+1)}(m)$.

Let us prove $P(k)$. Assume that $a\in R^{(k)}(m)\cap\ker(\partial)$.
Then $a$ can be written as
\[
a=\sum_{\substack{W\subset V\\
\#W=m
}
}a_{W}\ \ \ \ (a_{W}\in R(W)).
\]
Note that we have
\[
{\rm r}_{W\setminus\{v_{j}\}}^{W}a_{W}=0
\]
for all $j=1,\dots,k$ and $W\supset\{v_{1},\dots,v_{k}\}$. Put 
\[
a_{1}=\sum_{\substack{W\subset V\\
\#W=m\\
v_{k+1}\in W
}
}a_{W}
\]
 and
\[
a_{2}=\sum_{\substack{W\subset V\\
\#W=m\\
v_{k+1}\notin W
}
}a_{W}.
\]
Then we have $a=a_{1}+a_{2}$. For $W\subset V$ such that $\#W=m$
and $\{v_{1},\dots,v_{k+1}\}\subset W$, Let $b_{W}\in R(W)$ be any
element such that
\[
{\rm r}_{W\setminus\{v_{k+1}\}}^{W}b_{W}=C(W\setminus\{v_{k+1}\},W)a_{W\setminus\{v_{k+1}\}}
\]
and
\[
{\rm r}_{W\setminus\{v_{j}\}}^{W}b_{W}=0\ \ \ (j=1,\dots,k).
\]
Put 
\[
b=\sum_{\substack{\substack{W}
\subset V\\
\#W=m\\
\{v_{1},\dots,v_{k+1}\}\subset W
}
}b_{W}.
\]
Then we have
\[
\partial b-a_{2}\in R^{(k+1)}(m).
\]
Since $a_{1}\in R^{(k+1)}(m)$, we have $a-\partial b\in R^{(k+1)}(m)$.
Thus $P(k)$ is proved. Since $R_{m}^{(0)}=R_{m}$ and $R_{m}^{(r)}=\{0\}$,
the claim is proved.
\end{proof}

\subsection{Definition of $Q(\mathcal{F},\varphi,x)$}
\begin{defn}
Let $(\mathcal{F},\varphi,x)$ be a Shintani datum. Take $a_{j}\in\mathcal{F}_{j,r-j}$
for each $j=0,1,\dots,r$ and $b_{j}\in R_{j,r-j+1}$ for each $j=1,\dots,r+1$.
We say that $(a_{0},\dots,a_{r},b_{1},\dots,b_{r+1})$ is a compatible
system for $(\mathcal{F},\varphi,x)$ if
\begin{itemize}
\item $a_{0}\in\mathcal{F}(V)$ is a lift of $x\in\mathcal{F}(V)/\left(I_{F^{\times}}\otimes\mathcal{F}(V)\right)$, 
\item $\partial_{v}(a_{j+1})=\partial_{h}(a_{j})\ \ \ \ (j=0,1,\dots,r-1)$,
\item and $\partial_{h}(b_{j})=\varphi(a_{j})-\partial_{v}(b_{j+1})\ \ \ \ (j=1,2\dots,r).$
\end{itemize}
\end{defn}
\begin{prop}
Let $(\mathcal{F},\varphi,x)$ be a Shintani datum. There exists a
compatible system for $(\mathcal{F},\varphi,x)$.\end{prop}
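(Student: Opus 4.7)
The plan is to build the tuple by induction in two stages, using the exactness of the vertical columns (condition (1) of a Shintani datum) in Stage 1 and the exactness of the horizontal rows (Proposition \ref{prop:ExactHor}) in Stage 2. Throughout, the key bookkeeping fact is that $\partial_v$ and $\partial_h$ commute, that $\varphi$ commutes with both (being a natural transformation of $\mathbb{Z}[F^\times]$-module functors), and that $\partial_h^2 = \partial_v^2 = 0$.

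First, I would construct $a_0, a_1, \dots, a_r$. Choose any lift $a_0 \in \mathcal{F}_{0, r}$ of $x$ through the augmentation $\mathcal{I}_0 \to \mathbb{Z}$. To get $a_1$ I need $\partial_h(a_0)$ to lie in the image of $\partial_v : \mathcal{F}_{1, r-1} \to \mathcal{F}_{0, r-1}$; by vertical exactness this image equals the kernel of the projection $\mathcal{F}_{0, r-1} \to \mathcal{F}(r-1)/I\mathcal{F}(r-1)$, so it suffices to check that $\partial_h(a_0)$ projects to $0$ there. But that projection is represented by $\partial_h(\bar{x})$ for a lift $\bar{x} \in \mathcal{F}(V)$ of $x$, and condition (2) says precisely that $r_W^V(\bar{x}) \in I\mathcal{F}(W)$ for every $W \subsetneq V$, so $\partial_h(\bar{x}) \in I\mathcal{F}(r-1)$ as required. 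For $j \geq 1$, suppose $a_0, \dots, a_j$ are built with $\partial_v(a_i) = \partial_h(a_{i-1})$; then
\[
\partial_v(\partial_h(a_j)) = \partial_h(\partial_v(a_j)) = \partial_h(\partial_h(a_{j-1})) = 0,
\]
so $\partial_h(a_j)$ lies in $\ker \partial_v \subset \mathcal{F}_{j, r-j-1}$, which equals $\operatorname{image}(\partial_v)$ by vertical exactness at positive height, and $a_{j+1}$ can be chosen.

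Next, I would construct $b_{r+1}, b_r, \dots, b_1$ by descending induction. Set $b_{r+1} = 0$. For $j = r$, I need to solve $\partial_h(b_r) = \varphi(a_r)$; since the horizontal row $R_{r, 1} \to R_{r, 0} \to 0$ is exact by Proposition \ref{prop:ExactHor}, $\partial_h$ is surjective onto $R_{r, 0}$ and $b_r$ exists. For $j < r$, suppose $b_{j+1}$ has been constructed satisfying the required equation. I need $\varphi(a_j) - \partial_v(b_{j+1}) \in \operatorname{image}(\partial_h)$, and by horizontal exactness this amounts to checking this element lies in $\ker \partial_h$. The computation is
\[
\partial_h\bigl(\varphi(a_j) - \partial_v(b_{j+1})\bigr) = \varphi(\partial_h(a_j)) - \partial_v(\partial_h(b_{j+1})) = \varphi(\partial_v(a_{j+1})) - \partial_v\bigl(\varphi(a_{j+1}) - \partial_v(b_{j+2})\bigr) = 0,
\]
using the inductive relations for both $a_\bullet$ and $b_\bullet$ and $\partial_v^2 = 0$. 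Hence $b_j$ exists.

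The only genuinely subtle step is the base case $j = 0$ in Stage 1, where one must translate the abstract condition (2) on $\bar{x} \in \mathcal{F}(V)$ into the statement that $\partial_h(a_0)$ projects to zero in $H_0(F^\times, \mathcal{F}(r-1))$; everything else is an exactness-and-sign exercise that falls out of the two commuting differentials and the naturality of $\varphi$. I expect no essential obstacle beyond this bookkeeping.
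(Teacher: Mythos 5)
Your proof is correct and follows the same two-stage inductive scheme the paper sketches in its three-sentence argument: build the $a_j$'s up the anti-diagonal using condition (2) to start and vertical exactness to continue, then build the $b_j$'s down by descending induction from $b_{r+1}=0$ using Proposition \ref{prop:ExactHor}. You have simply supplied the diagram-chasing details (commutativity of $\partial_v$, $\partial_h$, $\varphi$; $\partial_v^2=\partial_h^2=0$; the translation of condition (2) into vanishing in $\mathcal{F}(r-1)/I\mathcal{F}(r-1)$) that the paper leaves implicit.
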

\begin{proof}
The existence of $a_{1}$ follows from the definition of Shintani
data. For $k=2,\dots,r$, the existence of $a_{k}$ follows from the
exactness of the vertical sequence. For $k=r,r-1,\dots,1$, the existence
of $b_{k}$ follows from Proposition \ref{prop:ExactHor}.\end{proof}
\begin{prop}
Let $(\mathcal{F},\varphi,x)$ be a Shintani datum. If $(a_{0},\dots,a_{r},b_{1},\dots,b_{r+1})$
is a compatible system for $(\mathcal{F},\varphi,x)$, then
\[
\varphi(a_{0})-\partial_{v}(b_{1})\in D.
\]
\end{prop}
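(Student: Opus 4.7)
The plan is a short diagram chase in the double complex, followed by identifying the kernel of the horizontal differential at position $(0,r)$ with the product ideal $D$. Only the first two pairs $(a_0,a_1)$ and $(b_1,b_2)$ of the compatible system enter the computation.

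First I would compute $\partial_h(\varphi(a_0)-\partial_v(b_1))$ in $R_{0,r-1}$. By naturality of $\varphi\colon\mathcal{F}\to R$ and the relation $\partial_v(a_1)=\partial_h(a_0)$,
\[
\partial_h(\varphi(a_0))=\varphi(\partial_h(a_0))=\varphi(\partial_v(a_1))=\partial_v(\varphi(a_1)).
\]
On the other hand, since $\partial_h$ and $\partial_v$ commute and $\partial_v^2=0$, the relation $\partial_h(b_1)=\varphi(a_1)-\partial_v(b_2)$ gives
\[
\partial_h(\partial_v(b_1))=\partial_v(\partial_h(b_1))=\partial_v(\varphi(a_1)-\partial_v(b_2))=\partial_v(\varphi(a_1)).
\]
Subtracting shows $\partial_h(\varphi(a_0)-\partial_v(b_1))=0$.

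Next I would interpret this vanishing. By construction the map $\partial_h\colon R(V)=\mathbb{Z}[N_F]\to R(r-1)=\bigoplus_{v\in V}\mathbb{Z}[N_F/N_v]$ is, up to signs, the direct sum of the natural projections $\pi_v\colon\mathbb{Z}[N_F]\to\mathbb{Z}[N_F/N_v]$, whose kernels are the ideals $I_{N_v,N_F}$. Hence $\varphi(a_0)-\partial_v(b_1)\in\bigcap_{v\in V}I_{N_v,N_F}$.

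The main remaining step, and the only nontrivial one, is to show that this intersection equals the product $D=\prod_{v\in V}I_{N_v,N_F}$. This is where the restricted direct product structure $N_F=\prod'_v N_v$ enters. Writing $N_F=M\times N'$ with $M=\prod_{v\in V}N_v$ (a finite direct product, since $V$ is finite) and $N'=\prod'_{v\notin V}N_v$ gives a tensor factorization
\[
\mathbb{Z}[N_F]\cong\bigl(\bigotimes_{v\in V}\mathbb{Z}[N_v]\bigr)\otimes\mathbb{Z}[N'].
\]
Splitting each $\mathbb{Z}[N_v]=\mathbb{Z}\oplus I_{N_v}$ via the augmentation, a direct check shows that both $\bigcap_{v\in V}I_{N_v,N_F}$ and $\prod_{v\in V}I_{N_v,N_F}$ coincide with $\bigl(\bigotimes_{v\in V}I_{N_v}\bigr)\otimes\mathbb{Z}[N']$. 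Combined with the previous step this yields $\varphi(a_0)-\partial_v(b_1)\in D$.
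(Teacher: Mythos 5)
Your proof is correct and follows the same route as the paper: show $\partial_h(\varphi(a_0)-\partial_v(b_1))=0$ via the compatible-system relations and naturality of $\varphi$, then identify $\ker\partial_h$ at position $(0,r)$ with $\bigcap_{v\in V}I_{N_v,N_F}=D$. You have simply written out the two steps that the paper asserts without computation — the diagram chase and the identity $\prod_v I_{N_v,N_F}=\bigcap_v I_{N_v,N_F}$ (the latter via the tensor factorization $\mathbb{Z}[N_F]\cong\bigotimes_{v\in V}\mathbb{Z}[N_v]\otimes\mathbb{Z}[N']$, which is indeed the reason the equality holds).
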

\begin{proof}
Note that the ideal $D=\prod_{v\in V}I_{N_{v},N_{F}}$ is equal to
\[
\{x\in R(V)\mid{\rm r}_{V\setminus\{v\}}^{V}(x)=0\ \text{for all }v\in V\}.
\]
Thus the proposition follows from
\begin{align*}
\partial_{h}(\varphi(a_{0})-\partial_{v}(b_{1})) & =0.
\end{align*}
\end{proof}
\begin{prop}
$\varphi(a_{0})-\partial_{v}(b_{1})\pmod{I_{F^{\times}}D}$ does not
depend on the choice of a compatible system $(a_{0},\dots,a_{r},b_{1},\dots,b_{r+1})$.\end{prop}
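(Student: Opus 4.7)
The plan is to analyze the difference of two compatible systems. Let $(a_j,b_j)$ and $(a_j',b_j')$ be two compatible systems for $(\mathcal{F},\varphi,x)$, and set $\alpha_j:=a_j-a_j'$, $\beta_j:=b_j-b_j'$. Since both $a_0,a_0'$ lift $x$, we have $\alpha_0\in I\mathcal{F}(V)$, and the compatibility conditions give $\partial_v\alpha_{j+1}=\partial_h\alpha_j$ and $\partial_h\beta_j=\varphi(\alpha_j)-\partial_v\beta_{j+1}$. It therefore suffices to prove $\varphi(\alpha_0)-\partial_v\beta_1\in I_{F^{\times}}D$.

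First, I would build a secondary, one-degree-higher zig-zag inside $\mathcal{F}_{\bullet,\bullet}$. By vertical exactness (the first proposition of the section), $\operatorname{im}(\partial_v\colon\mathcal{F}_{1,r}\to\mathcal{F}_{0,r})=I\mathcal{F}(V)$, so there exists $c_0\in\mathcal{F}_{1,r}$ with $\partial_v c_0=\alpha_0$. For $j=1,\dots,r$ a short sign-tracking calculation (using $\partial_v^2=0$, $\partial_v\partial_h=\partial_h\partial_v$, and $\partial_v\alpha_j=\partial_h\alpha_{j-1}$) shows that $\partial_h c_{j-1}+(-1)^j\alpha_j$ is $\partial_v$-closed, so vertical exactness produces $c_j\in\mathcal{F}_{j+1,r-j}$ with $\partial_v c_j=\partial_h c_{j-1}+(-1)^j\alpha_j$. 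Setting $\xi_j:=\varphi(c_{j-1})+(-1)^j\beta_j\in R_{j,r-j+1}$, direct expansion gives $\partial_v\xi_1=\varphi(\alpha_0)-\partial_v\beta_1$ together with the cascade identities $\partial_h\xi_j=\partial_v\xi_{j+1}$ for $j=1,\dots,r$.

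Next I would collapse the cascade by induction downward from $j=r+1$, using horizontal exactness of $R_{\bullet,\bullet}$ (Proposition~\ref{prop:ExactHor}). Given $\tilde\xi_j\in R_{j,r-j+1}$ with $\partial_h\tilde\xi_j=0$ (trivially true for $j=r+1$ since $R(-1)=0$), solve $\partial_h\zeta_j=\tilde\xi_j$ for some $\zeta_j\in R_{j,r-j+2}$, and set $\tilde\xi_{j-1}:=\xi_{j-1}-\partial_v\zeta_j$. Using $\partial_h\xi_{j-1}=\partial_v\xi_j$ together with $\partial_v\partial_h=\partial_h\partial_v$ one checks $\partial_h\tilde\xi_{j-1}=0$ and $\partial_v\tilde\xi_{j-1}=\partial_v\xi_{j-1}$, so the induction propagates down to $\tilde\xi_1\in R_{1,r}$ satisfying $\partial_h\tilde\xi_1=0$ and $\partial_v\tilde\xi_1=\varphi(\alpha_0)-\partial_v\beta_1$.

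Finally, taking $\mathcal{I}_0=\mathbb{Z}[F^{\times}]$ (say, the bar resolution), $\mathcal{I}_1$ is $\mathbb{Z}[F^{\times}]$-free and therefore flat, so the kernel of $\partial_h\colon R_{1,r}\to R_{1,r-1}$ identifies with $\mathcal{I}_1\otimes_{\mathbb{Z}[F^{\times}]}D$; moreover, the image of $\partial_v\colon\mathcal{I}_1\otimes_{\mathbb{Z}[F^{\times}]}D\to\mathcal{I}_0\otimes_{\mathbb{Z}[F^{\times}]}D\cong D$ is exactly $I_{F^{\times}}D$, because $\operatorname{im}(\partial_v\colon\mathcal{I}_1\to\mathcal{I}_0)$ is the augmentation ideal of $\mathbb{Z}[F^{\times}]$ and $D$ is an ideal of $\mathbb{Z}[N_F]$. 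Hence $\varphi(\alpha_0)-\partial_v\beta_1=\partial_v\tilde\xi_1\in I_{F^{\times}}D$. The decisive difficulty, and what dictates the whole strategy, is that the naive element $\partial_v\xi_1$ a priori lies only in $I_{F^{\times}}\cdot R(V)$; the iterative horizontal collapse of the third paragraph is exactly what lifts a $\partial_v$-representative of $\varphi(\alpha_0)-\partial_v\beta_1$ into $\mathcal{I}_1\otimes D$ and thereby forces the result into $I_{F^{\times}}\cdot D$.
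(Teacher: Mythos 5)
Your proof is correct and follows essentially the same route as the paper's: show that the difference of the two outputs is $\partial_v$ of a $\partial_h$-closed element of $R_{1,r}$, identify that kernel with $\mathcal{I}_1\otimes_{\mathbb{Z}[F^{\times}]}D$ via flatness of $\mathcal{I}_1$, and conclude the difference lies in $I_{F^{\times}}D$. Where the paper merely asserts the existence of the homotopy elements relating the two systems, you actually construct them (your $c_j$ and $\zeta_j$ play the role of the paper's $c_j$ and $d_j$) using vertical and horizontal exactness, so your argument is a fleshed-out version of the same proof.
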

\begin{proof}
Let $(a_{0}',\dots,a_{r}',b_{1}',\dots,b_{r}')$ be another compatible
system. Then there exists $c_{j}\in\mathcal{F}_{j,r+1-j}$ for $j=1,\dots,r+1$
and $d_{j}\in R_{j,r+2-j}$ for $j=2,\dots,r+2$ such that
\begin{align*}
a_{0}' & =a_{0}+\partial_{v}(c_{1})\\
a_{j}' & =a_{j}+\partial_{v}(c_{j+1})+\partial_{h}(c_{j})\ \ \ (j=1,\dots,r)\\
b_{j}' & =b_{j}+\varphi(c_{j})+\partial_{h}(d_{j})-\partial_{v}(d_{j+1})\ \ \ (j=2,\dots,r).
\end{align*}
Put
\[
e=\varphi(c_{1})-b_{1}'+b_{1}+\partial_{v}(d_{2})\in R_{1,r}.
\]
Then we have $e\in\mathcal{I}_{1}\otimes_{\mathbb{Z}[F^{\times}]}D$
since $\partial_{h}(e)=0$. We have
\[
\varphi(a_{0}')-\partial_{v}(b_{1}')-\varphi(a_{0})+\partial_{v}(b_{1})=\partial_{v}(e)\in I_{F^{\times}}D.
\]
Thus the claim is proved.\end{proof}
\begin{defn}
Let $(\mathcal{F},\varphi,x)$ be a Shintani datum. We define $Q(\mathcal{F},\varphi,x)\in D/\left(I_{F^{\times}}D\right)$
by
\[
Q(\mathcal{F},\varphi,x)=\varphi(a_{0})-\partial_{v}(b_{1})
\]
where $(a_{0},\dots,a_{r},b_{1},\dots,b_{r+1})$ is a compatible system
for $(\mathcal{F},\varphi,x)$.
\end{defn}
It is obvious that $Q(\mathcal{F},\varphi,x)$ does not depend on
the choice of a free resolution $\cdots\to\mathcal{I}_{1}\to\mathcal{I}_{0}\to\mathbb{Z}\to0$.
\begin{prop}
\label{prop:ChangeOfFunctorInQ}Let $\mathcal{F}$ and $\mathcal{G}$
be functors from ${\bf Sub}(V)^{{\rm op}}$ to ${\bf Mod}(F^{\times})$.
Let $\phi:\mathcal{F}\to\mathcal{G}$ and $\psi:\mathcal{G}\to R$
be natural transformations. Let $x$ be an element of $\mathcal{F}(V)/\left(I_{F^{\times}}\otimes\mathcal{F}(V)\right)$.
If $(\mathcal{F},\psi\circ\phi,x)$ and $(\mathcal{G},\psi,\phi_{V}(x))$
are Shintani data then 
\[
Q(\mathcal{F},\psi\circ\phi,x)=Q(\mathcal{G},\psi,\phi_{V}(x)).
\]
\end{prop}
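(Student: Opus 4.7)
The plan is to observe that a compatible system for $(\mathcal{F},\psi\circ\phi,x)$ can be pushed forward along $\phi$ to produce a compatible system for $(\mathcal{G},\psi,\phi_V(x))$ while the $b_j$'s are left untouched, and that both systems then yield literally the same element $\varphi(a_0)-\partial_v(b_1)$ in $D/I_{F^\times}D$. Since $Q$ is independent of the choice of compatible system, this gives the equality.

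More concretely, fix a free resolution $\cdots\to\mathcal{I}_1\to\mathcal{I}_0\to\mathbb{Z}\to0$ and let $(a_0,\dots,a_r,b_1,\dots,b_{r+1})$ be a compatible system for $(\mathcal{F},\psi\circ\phi,x)$. Because $\phi$ is a natural transformation of $\mathbb{Z}[F^\times]$-module-valued functors, each $\phi_W$ is a $\mathbb{Z}[F^\times]$-homomorphism and induces $\mathrm{id}\otimes\phi_W:\mathcal{F}_{j,k}\to\mathcal{G}_{j,k}$. Set $a_j':=(\mathrm{id}\otimes\phi)(a_j)\in\mathcal{G}_{j,r-j}$ and $b_j':=b_j$. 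The three compatibility conditions transfer by naturality: $a_0'=\phi_V(a_0)$ lifts $\phi_V(x)$ since reduction mod $I_{F^\times}\otimes\mathcal{G}(V)$ commutes with $\phi_V$; the identity $\partial_v(a_{j+1}')=\partial_h(a_j')$ follows because $\phi$ commutes with the restrictions $\mathrm{r}_U^W$ (hence with $\partial_h$) and acts only on the second tensor factor (hence commutes with $\partial_v$, which lives on $\mathcal{I}_\bullet$); finally
\[
\partial_h(b_j')=\partial_h(b_j)=(\psi\circ\phi)(a_j)-\partial_v(b_{j+1})=\psi(a_j')-\partial_v(b_{j+1}'),
\]
using $\psi\circ(\mathrm{id}\otimes\phi)=\psi\circ\phi$ at the middle step. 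Thus $(a_0',\dots,a_r',b_1',\dots,b_{r+1}')$ is a compatible system for $(\mathcal{G},\psi,\phi_V(x))$, which by hypothesis is itself a Shintani datum, so $Q(\mathcal{G},\psi,\phi_V(x))$ may be computed from it.

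Evaluating both sides then gives
\[
Q(\mathcal{G},\psi,\phi_V(x))=\psi(a_0')-\partial_v(b_1')=(\psi\circ\phi)(a_0)-\partial_v(b_1)=Q(\mathcal{F},\psi\circ\phi,x)
\]
in $D/I_{F^\times}D$. There is no substantive obstacle; the one thing that must be checked carefully is that $\mathrm{id}\otimes\phi$ intertwines both differentials of the double complex and that it commutes with $\psi$ after composition, but both are immediate consequences of $\phi$ being a natural transformation of functors from $\mathbf{Sub}(V)^{\mathrm{op}}$ to $\mathbf{Mod}(F^\times)$. The mild bookkeeping point is merely that the same $b_j$'s work for both systems, which is why no independence-of-choices argument beyond the one already proved for $Q$ is needed.
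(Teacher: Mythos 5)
Your proof is correct and follows essentially the same route as the paper: push the $a_j$ forward along $\phi$, keep the $b_j$ unchanged, observe that this yields a compatible system for $(\mathcal{G},\psi,\phi_V(x))$, and read off the equality of the two $Q$'s. The paper states this in one sentence; you simply spell out the (routine) verification that naturality of $\phi$ makes the diagram commute.
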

\begin{proof}
If $(a_{0},\dots,a_{r},b_{1},\dots,b_{r+1})$ is a compatible system
for $(\mathcal{F},\psi\circ\phi,x)$ then $(\phi(a_{0}),\dots,\phi(a_{r}),b_{1},\dots,b_{r+1})$
is a compatible system for $(\mathcal{G},\psi,\phi_{V}(x))$. Thus
the claim holds. \end{proof}
\begin{thm}
\label{thm:GrossFromShintani}\textup{ If an $(S,T,V,J)$-Shintani
datum exists then
\[
\Theta_{E,S,T}\in\prod_{v\in V}I_{G_{v},G}
\]
for all $E\in\Upsilon(J)$.}\end{thm}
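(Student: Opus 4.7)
The plan is to push $Q(\mathcal{F},\varphi,x)\in D/I_{F^{\times}}D$ forward along the reciprocity map and to identify its image in $\mathbb{Z}[G]$ with $\Theta_{E,S,T}$. Since ${\rm rec}$ carries $N_{v}$ into $G_{v}$, it sends each $I_{N_{v},N_{F}}$ into $I_{G_{v},G}$; so ${\rm rec}(D)\subset D'=\prod_{v\in V}I_{G_{v},G}$, and the theorem will follow immediately once we show that $\Theta_{E,S,T}$ lies in this image.

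Fix an $(S,T,V,J)$-Shintani datum $(\mathcal{F},\varphi,x)$, an extension $E\in\Upsilon(J)$ with $G={\rm Gal}(E/F)$, and a compatible system $(a_{0},\dots,a_{r},b_{1},\dots,b_{r+1})$, so that $Q=\varphi(a_{0})-\partial_{v}(b_{1})\in D$. The first key point is that global class field theory forces ${\rm rec}(f)=1\in G$ for every principal idele $f\in F^{\times}$, so ${\rm rec}((1-f)y)=0$ for every $y\in N_{F}$; therefore ${\rm rec}$ annihilates $I_{F^{\times}}\cdot\mathbb{Z}[N_{F}]$ and descends to a well-defined map on $\mathbb{Z}[N_{F}]/I_{F^{\times}}\mathbb{Z}[N_{F}]$, and in particular on $D/I_{F^{\times}}D$.

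It remains to evaluate ${\rm rec}(Q)$ summand by summand. By the defining condition of an $(S,T,V,J)$-Shintani datum, applied to the lift $a_{0}$ of $x$, we have ${\rm rec}(\varphi(a_{0}))=\Theta_{E,S,T}$. For the other summand, $\partial_{v}(b_{1})$ lies in the image of $\mathcal{I}_{1}\otimes_{\mathbb{Z}[F^{\times}]}R(V)\to\mathcal{I}_{0}\otimes_{\mathbb{Z}[F^{\times}]}R(V)$; because $\mathcal{I}_{\bullet}\to\mathbb{Z}$ is a free resolution, that image is exactly the kernel of $\mathcal{I}_{0}\otimes_{\mathbb{Z}[F^{\times}]}R(V)\to\mathbb{Z}\otimes_{\mathbb{Z}[F^{\times}]}R(V)$, namely $I_{F^{\times}}\mathbb{Z}[N_{F}]$, which the previous paragraph shows is killed by ${\rm rec}$. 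Combining the two computations, $\Theta_{E,S,T}={\rm rec}(Q)\in{\rm rec}(D)\subset D'$, which is the stated inclusion. The only genuinely substantive input is global reciprocity; every other step is a formal chase through the double complex of this section and should present no real obstacle, the heavy lifting having already been packaged into the construction of $Q(\mathcal{F},\varphi,x)$ and its properties established just above.
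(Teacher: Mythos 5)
Your proposal is correct and takes essentially the same route as the paper: both compute ${\rm rec}(\varphi(a_{0}))=\Theta_{E,S,T}$, observe that ${\rm rec}$ kills $\partial_{v}(b_{1})$ because it lies in $I_{F^{\times}}$ (you spell out why via right-exactness of $-\otimes_{\mathbb{Z}[F^{\times}]}R(V)$, which the paper leaves implicit), and conclude from $\varphi(a_{0})-\partial_{v}(b_{1})\in D$ that the image lands in $\prod_{v\in V}I_{G_{v},G}$.
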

\begin{proof}
Let $(\mathcal{F},\varphi,x)$ be an $(S,T,V)$-Shintani datum and
$(a_{0},\dots,a_{r},b_{1},\dots,b_{r+1})$ a compatible system for
$(\mathcal{F},\varphi,x)$. Since $(\mathcal{F},\varphi,x)$ is an
$(S,T,V)$-Shintani datum, we have
\begin{equation}
{\rm rec}(\varphi(a_{0}))=\Theta_{E,S,T}.\label{eq:p1}
\end{equation}
Since $\varphi(a_{0})-\partial_{v}(b_{1})\in\prod_{v\in V}I_{N_{v},N_{F}}$
and $\partial_{v}(b_{1})\in I_{F^{\times}}$, we have 
\begin{align}
{\rm rec}(\varphi(a_{0})) & ={\rm rec}(\varphi(a_{0})-\partial_{v}(b_{1}))\in\prod_{v\in V}I_{G_{v},G}.\label{eq:p2}
\end{align}
The theorem follows from (\ref{eq:p1}) and (\ref{eq:p2}).
\end{proof}

\section{\label{sec:ShintaniDatumBLT}The construction of an $(S,T,V,J)$-Shintani
datum $(B,\mathcal{L},\vartheta)$.}

In this section, we construct an $(S,T,V,J)$-Shintani datum $(B,\mathcal{L},\vartheta)$
under the Condition (C1). We assume (C1) throughout this section.
We write $T'$ for the set of primes of $F$ which have a same residue
characteristic as some prime in $T$. We write $\mathsf{I}_{F}$ for
the group of fractional ideals of $F$, $\mathsf{I}_{(S)}$ for the
group of fractional ideals coprime to $S$, $\mathsf{I}_{(T)}$ for
the group of fractional ideals coprime to $T'$, $\mathsf{I}_{(S,T)}$
for the group of fractional ideals coprime to $S$ and $T'$, $\mathsf{P}_{(T)}$
for the group of principal fractional ideals coprime to $T'$, $F_{(S)}$
for the group of elements of $F^{\times}$ coprime to $S$, $F_{(T)}$
for the group of elements of $F^{\times}$ coprime to $T'$. For a
finite set $W$ of places of $F$, we put $N_{W}=\prod_{v\in W}N_{v}$.
Put $N^{S}=\prod'_{v\notin S}N_{v}=\prod'_{v\notin S}(F_{v}^{\times}/O_{v}^{\times})$.
Note that we have
\[
N_{F}=N_{S}\oplus N^{S}.
\]
We write $i_{S}:F^{\times}\to N_{F}$ for the natural composite map
$F^{\times}\to N_{S}\subset N_{F}$, and $i^{S}:F^{\times}\to N_{F}$
for the natural composite map $F^{\times}\to N^{S}\subset N_{F}$.
Note that $i_{S}+i^{S}$ is equal to a diagonal map $F^{\times}\to N_{F}$.
Let $\iota:\mathsf{I}_{F}\to N^{S}\subset N_{F}$ be the unique homomorphism
such that $\iota((x))=i^{S}(x)$ for all $x\in F^{\times}$. For $W\subset V$,
put $\bar{W}=W+(S\setminus V)$. For $W\subset S_{\infty}$, put $X_{W}=\prod_{v\in W}\left(F_{v}^{\times}/\mathbb{R}_{+}\right)$.
Put $E=\mathcal{O}_{F}^{\times}$. We denote by $E_{+}$ the group
of totally positive units of $F$. We put $n=[F:\mathbb{Q}]$.

\subsection{Definition of $B$.}

For a non-empty set $X$, let $C_{k}(X)$ denote a free $\mathbb{Z}$-module
generated by the formal symbols 
\[
(x_{1},\dots,x_{k})\ \ \ (x_{1},\dots,x_{k}\in X).
\]
There is an exact sequence
\[
\cdots\to C_{3}(X)\xrightarrow{d_{3}}C_{2}(X)\xrightarrow{d_{2}}C_{1}(X)\xrightarrow{d_{1}}C_{0}(X)\simeq\mathbb{Z}\xrightarrow{d_{0}}0
\]
where
\[
d_{m}((x_{1},\dots,x_{m}))=\sum_{i=1}^{m}(-1)^{i-1}(x_{1},\dots,\widehat{x_{i}},\dots,x_{m}).
\]
We put $\mathcal{C}(X)=C_{n}(X)/\ker(C_{n}(X)\xrightarrow{d_{n}}C_{n-1}(X)).$

If a group $A$ acts on $X$, we define the action of $A$ to $C_{k}(X)$
by
\[
a(x_{1},\dots,x_{k})=(ax_{1},\dots,ax_{k}).
\]

If $A$ acts on $X$ freely, there is a canonical isomorphism
\begin{equation}
H_{k-1}(A,\mathbb{Z})\simeq d_{k}^{-1}(I_{A}(C_{k-1}(X)))/(\ker\partial_{k}+I_{A}(C_{k}(X)))\label{eq:grouphomology}
\end{equation}
from the definition of the group homology.

For $W\subset S_{\infty}$ and $g\in X_{W}$, put 
\[
F_{g}=F_{(T)}\cap i^{-1}(g)\ \ \ \ (i:F^{\times}\to X_{W}\text{ is a diagonal map}).
\]
For $W\subset S_{\infty}$ and $k\geq0$, we define a $\mathbb{Z}[F_{(T)}]$-module
$C_{k,W}$ by the following way. The underlying $\mathbb{Z}$-module
of $C_{k,W}$ is 
\[
\bigoplus_{g\in X_{W}}C_{k}(F_{g}).
\]
We write an element of $C_{k,W}$ as 
\[
\sum_{g\in X_{W}}\Lambda_{g}\llbracket g\rrbracket\ \ \ \ \ (\Lambda_{g}\in C_{k}(F_{g})).
\]
We define the action of $F_{(T)}$ to $C_{k,W}$ by
\[
[x](\Lambda\llbracket g\rrbracket)={\rm sgn}(x)\cdot(x\Lambda)\llbracket xg\rrbracket\ \ \ \ (x\in F_{(T)}).
\]
For $W\subset S_{\infty}$, we define a $\mathbb{Z}[F_{(T)}]$-module
$\mathscr{K}_{W}$ by
\[
C_{n,W}/\ker(C_{n,W}\xrightarrow{d_{n}}C_{n-1,W}).
\]

For $W\subset S_{f}$, we denote by $\mathscr{A}_{W}\subset\mathbb{Z}[\mathsf{I}_{F}]$
the $\mathbb{Z}[\mathsf{I}_{(T)}]$-submodule generated by
\[
\prod_{\mathfrak{p}\in W}(1-[\mathfrak{p}])\prod_{\mathfrak{p}\in T}(1-N(\mathfrak{p})[\mathfrak{p}]).
\]

For $W\subset V$, we define a $\mathbb{Z}[F_{(T)}]$-module $\mathscr{M}_{W}$
by the following way. The underlying $\mathbb{Z}$-module of $\mathscr{M}_{W}$
is
\[
\mathscr{K}_{\bar{W}\cap S_{\infty}}\otimes\mathscr{A}_{\bar{W}\cap S_{f}}.
\]
The action of $F_{(T)}$ to $\mathcal{M}_{W}$ is defined by
\[
x(f\otimes g)=xf\otimes xg\ \ \ \ ((x,f,g)\in F_{(T)}\times\mathscr{K}_{\bar{W}\cap S_{\infty}}\times\mathscr{A}_{\bar{W}\cap S_{\infty}}).
\]

\begin{defn}
We define a functor $B:{\bf Sub}(V)^{{\rm op}}\to{\bf Mod}(\mathbb{Z}[F^{\times}])$
by
\[
B(W)=\left(\mathscr{M}_{W}\otimes\mathbb{Z}[N^{S}]\otimes\mathbb{Z}[F^{\times}]\right)/\mathscr{N}
\]
where $\mathscr{N}$ is an $\mathbb{Z}$-module spanned by
\[
\{x^{-1}a\otimes i^{S}(x)b\otimes c-a\otimes b\otimes xc\mid x\in F_{(T)},a\in\mathscr{M}_{W},b\in\mathbb{Z}[N^{S}],c\in\mathbb{Z}[F^{\times}]\}.
\]
We define the action of $F^{\times}$ to $B(W)$ by $x(a\otimes b\otimes c)=(a\otimes b\otimes xc)$.
\end{defn}

\subsection{Definition of functor $\beta$.}

For linearly independent vectors $x_{1},\dots,x_{m}\in F$ over $\mathbb{Q}$,
we define the rational cone $C(x_{1},\dots,x_{m})\subset F^{\times}$
by
\[
C(x_{1},\cdots,x_{m})=\{\sum_{i=1}^{m}t_{i}x_{i}\mid t_{i}\in\mathbb{Q}_{>0}\text{ for \ensuremath{i=1,\dots,m}}\}.
\]
For $W\subset S_{\infty}$, we denote by $\mathcal{K}_{W}$ the subgroup
of ${\rm Map}(F,\mathbb{Z})$ spanned by
\[
\{\bm{1}_{C(x_{1},\dots,x_{m})}\mid m>0,\ g\in X_{W},\ x_{1},\dots,x_{m}\in F_{g}\}.
\]
For $\gamma=\sum_{j}n_{j}\mathfrak{b}_{j}\in\mathbb{Z}[\mathsf{I}_{F}]$,
we put
\[
\bm{1}_{\gamma}=\sum_{j}n_{j}\bm{1}_{\mathfrak{b}_{j}}\in{\rm Map}(F,\mathbb{Z}).
\]
For $W\subset S_{f}$, we denote by $\mathcal{A}_{W}$ the subgroup
of ${\rm Map}(F,\mathbb{Z})$ spanned by $\{\bm{1}_{\gamma}\mid\gamma\in\mathscr{A}_{W}\}$.
For $W\subset V$, we denote by $\mathcal{M}_{W}$ the subgroup of
${\rm Map}(F^{\times},\mathbb{Z})$ spanned by
\[
\{f\cdot g\mid(f,g)\in\mathcal{K}_{\bar{W}\cap S_{\infty}}\times\mathcal{A}_{\bar{W}\cap S_{f}}\}.
\]

\begin{defn}
We define a functor $\beta:{\bf Sub}(V)^{{\rm op}}\to{\bf Mod}(\mathbb{Z}[F^{\times}])$
by
\[
\beta(W)=\left(\mathcal{M}_{W}\otimes\mathbb{Z}[N^{S}]\otimes\mathbb{Z}[F^{\times}]\right)/\mathcal{N}
\]
where $\mathcal{N}$ is an $\mathbb{Z}$-module spanned by
\[
\{x^{-1}a\otimes i^{S}(x)b\otimes c-a\otimes b\otimes xc\mid x\in F_{(T)},a\in\mathcal{M}_{W},b\in\mathbb{Z}[N^{S}],c\in\mathbb{Z}[F^{\times}]\}
\]
where
\[
p:N_{F}=N_{S}\oplus N^{S}\to N^{S}
\]
is the projection. We define the action of $F^{\times}$ to $\beta(W)$
by $x(a\otimes b\otimes c)=(a\otimes b\otimes xc)$.
\end{defn}

\subsection{Definition of natural transformation $\xi_{v,e}:B\to\beta$}

We use the Shintani cocycle defined in \cite{MR3351752}. Let us recall
the construction of it. Let us fix a determinant map ${\rm det}:F^{n}\to\mathbb{Q}.$
In this subsection, we fix an infinite place $v\in S_{\infty}\setminus V$
and $e\in\{\pm1\}$. Let $\rho$ be an embedding of $F$ into $\mathbb{R}$
corresponding to $v$. Let $w$ be a unique element of $F\otimes_{\mathbb{Q}}\mathbb{R}$
such that
\[
\rho'(w)=\begin{cases}
e & \rho'=\rho\\
0 & \rho'\neq\rho
\end{cases}
\]
for all the embeddings $\rho':F\to\mathbb{R}$. For linearly independent
vectors $x_{1},\dots,x_{m}\in F^{\times}$, we define the cone $C_{\infty}(x_{1},\dots,x_{m})\subset F\otimes_{\mathbb{Q}}\mathbb{R}$
by
\[
C_{\infty}(x_{1},\dots,x_{m})=\{\sum_{i=1}^{m}t_{i}x_{i}\mid t_{i}\in\mathbb{R}_{>0}\text{ for \ensuremath{i=1,\dots,m}}\}.
\]
The Shintani cocycle $\Xi_{v,e}\in{\rm Hom}_{\mathbb{Z}[F^{\times}]}(C_{n}(\rho^{-1}(\mathbb{R}_{>0}))\oplus C_{n}(\rho^{-1}(\mathbb{R}_{<0})),{\rm Map}(F^{\times},\mathbb{Z}))$
is defined as $\Xi_{v,e}(\Lambda(x_{1},\dots,x_{n}))=0$ if $x_{1},\dots,x_{n}\in F$
are linearly dependent over $\mathbb{Q}$ and as 
\begin{align*}
\Xi_{v,e}((x_{1},\dots,x_{n}))(y)= & {\rm sgn}({\rm det}(x_{1},\dots,x_{n}))\times\\
 & \ \ \begin{cases}
\lim_{\epsilon\to0+}1_{C_{\infty}(x_{1},\dots,x_{n})}(y+\epsilon w) & x_{1},\dots,x_{n}\text{ are in }\rho^{-1}(\mathbb{R}_{>0})\\
\lim_{\epsilon\to0+}1_{C_{\infty}(x_{1},\dots,x_{n})}(y-\epsilon w) & x_{1},\dots,x_{n}\text{ are in }\rho^{-1}(\mathbb{R}_{<0})
\end{cases}
\end{align*}
otherwise. The following theorem is proved in \cite{MR3351752}.
\begin{thm}
\label{thm:cocycle-relation}$\Xi_{v,e}$ satisfies the following
cocycle relation:
\[
\Xi_{v,e}(d_{n+1}\Lambda(x_{1},\dots,x_{n+1}))=0.
\]

\end{thm}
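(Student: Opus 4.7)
The plan is to verify the cocycle relation pointwise on $F^{\times}$: evaluate the cochain $\Xi_{v,e}(d_{n+1}(x_1,\ldots,x_{n+1}))$ at an arbitrary test point $y \in F^{\times}$ and show that the resulting signed sum vanishes. By linearity it suffices to treat a single simplex $(x_1,\ldots,x_{n+1})$, and by the direct-sum decomposition of the domain I may assume that all $x_i$ lie in the single half-space $\rho^{-1}(\mathbb{R}_{>0})$, the other case being symmetric. Terms for which the $n$-subset $(x_1,\ldots,\widehat{x_i},\ldots,x_{n+1})$ is linearly dependent over $\mathbb{Q}$ vanish by definition, so I may additionally assume that every such $n$-subset is linearly independent; equivalently, the unique (up to scalar) linear relation $\sum_i \lambda_i x_i = 0$ has all coefficients $\lambda_i$ nonzero.

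With these reductions in place, the computation reduces to a classical combinatorial identity about simplicial cones. Writing $y' = y + \epsilon w$ for a small $\epsilon > 0$, I need to verify
\begin{equation*}
\sum_{i=1}^{n+1}(-1)^{i-1}\operatorname{sgn}(\det(x_1,\ldots,\widehat{x_i},\ldots,x_{n+1}))\,\mathbf{1}_{C_\infty(x_1,\ldots,\widehat{x_i},\ldots,x_{n+1})}(y') = 0.
\end{equation*}
For $y'$ in general position (i.e.\ not on any facet of any of the simplicial cones $C_\infty(x_1,\ldots,\widehat{x_i},\ldots,x_{n+1})$), I expand $y'$ in each basis $\{x_k\}_{k\neq j}$, and track how the signs of the coordinates change as $j$ varies by using the relation $\sum_i \lambda_i x_i = 0$. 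The resulting telescoping, together with the compensating signs from the determinants, is exactly the Brion/Solomon-Hu vanishing identity for the signed indicator sum of the $n+1$ simplicial cones attached to an $n$-dimensional ``simplex'' in $\mathbb{R}^n$; applied to $y'$, it delivers the desired zero.

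The main obstacle is the boundary behaviour. When $y$ itself lies on a wall of some $C_\infty(x_1,\ldots,\widehat{x_i},\ldots,x_{n+1})$, the raw indicator function is ambiguous, and the signed sum would depend on how one resolves the ambiguity. The role of the vector $w$ — concentrated in the $v$-coordinate with sign $e$, for an $v \in S_\infty \setminus V$ — is precisely to force a single, uniform resolution: for all sufficiently small $\epsilon > 0$, the perturbed point $y + \epsilon w$ simultaneously avoids every facet of every cone appearing in the sum, so the general-position identity applies to all $n+1$ summands at once. Checking that this uniform perturbation is compatible with the splitting into the two half-spaces $\rho^{-1}(\mathbb{R}_{\pm})$, so that the sign conventions from the definition of $\Xi_{v,e}$ glue correctly across the direct summands of its domain, is the technical heart of the argument and is where the careful choice of $w$ becomes essential.
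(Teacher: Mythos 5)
The paper does not prove this theorem at all: it explicitly states ``The following theorem is proved in \cite{MR3351752}'' and delegates entirely to that reference, so there is no internal proof to compare your argument against. That said, your sketch does follow the general framework of the literature (reduce to a pointwise signed-indicator identity for simplicial cones and handle boundary points by the $\epsilon w$ perturbation), so it is worth commenting on its internal soundness.

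Two points. First, the ``gluing across the two half-spaces'' issue you flag as the technical heart is in fact a non-issue. The boundary map $d_{n+1}$ preserves the direct-sum decomposition: if $(x_1,\dots,x_{n+1})\in C_{n+1}(\rho^{-1}(\mathbb{R}_{>0}))$, then every face $(x_1,\dots,\widehat{x_i},\dots,x_{n+1})$ already lies in $C_n(\rho^{-1}(\mathbb{R}_{>0}))$. The whole computation takes place in one summand, and the two sign conventions in the definition of $\Xi_{v,e}$ never interact within a single value of $\Xi_{v,e}\circ d_{n+1}$. Your stated reduction to one half-space is correct, but it is automatic, not a choice.

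Second, the step that actually requires a nontrivial argument is the one you pass over quickly: you must show that for \emph{any} $y\in F^{\times}$ and any of the finitely many facet hyperplanes appearing (each spanned by $n-1$ of the $x_i$'s, all lying in $F$), the perturbed point $y+\epsilon w$ avoids that hyperplane for all sufficiently small $\epsilon>0$. This fails for a generic perturbation vector: if $y$ lies on such a hyperplane and the perturbation direction is parallel to it, no small $\epsilon$ helps. The reason the specific $w$ works is an algebraic fact about the number field: a hyperplane spanned by $\mathbb{Q}$-linearly independent elements of $F$ is the real extension of the kernel of a nonzero $\mathbb{Q}$-linear functional $\ell:F\to\mathbb{Q}$, which under the trace pairing corresponds to a nonzero $\ell'\in F$; then $\ell(w)=e\,\rho(\ell')\neq 0$ because the embedding $\rho$ is injective on the field $F$. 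Hence $w$ lies on no $\mathbb{Q}$-rational hyperplane, and the perturbation always works. Without this observation, the proof has a genuine gap; with it, the argument closes, but you should state and prove it rather than assert that $y+\epsilon w$ ``simultaneously avoids every facet.''

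Finally, the unnamed ``Brion/Solomon--Hu vanishing identity'' should be stated precisely: for $n+1$ vectors $x_1,\dots,x_{n+1}\in\mathbb{R}^n$ such that every $n$-subset is a basis, the function
\[
y\longmapsto \sum_{i=1}^{n+1}(-1)^{i-1}\operatorname{sgn}\bigl(\det(x_1,\dots,\widehat{x_i},\dots,x_{n+1})\bigr)\,\mathbf{1}_{C_\infty(x_1,\dots,\widehat{x_i},\dots,x_{n+1})}(y)
\]
vanishes off the union of the $\binom{n+1}{2}$ facet hyperplanes. As stated your proposal is a plausible outline consistent with the cited reference, but it is not yet a proof.
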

For $W\subset V$, we define $\phi_{W,v,e}:\mathscr{M}_{W}\to\mathcal{M}_{W}$
by
\[
\phi_{W,v,e}(\Lambda\otimes\gamma)=\Xi_{v,e}(j(\Lambda))\cdot\bm{1}_{\gamma}\ \ \ \ ((\Lambda,\gamma)\in\mathscr{K}_{\bar{W}\cap S_{\infty}}\times\mathscr{A}_{\bar{W}\cap S_{f}})
\]
where $j:\mathscr{K}_{\bar{W}\cap S_{\infty}}\to\mathcal{C}(F^{\times})$
is a map defined by
\[
j(\sum_{g}D_{g}\llbracket g\rrbracket)=\sum_{g}D_{g}.
\]
Note that $j$ is \emph{not }a $\mathbb{Z}[F_{(T)}]$-homomorphism
since 
\[
j([x]\Lambda)={\rm sgn}(x)[x]j(\Lambda)\ \ \ \ ((x,\Lambda)\in\mathscr{K}_{\bar{W}\cap S_{\infty}}\times F_{(T)}).
\]
\emph{ }
\begin{defn}
We define a natural transformation $\xi_{v,e}$ from $B$ to $\beta$
by
\[
\xi_{v,e}(W)(a\otimes b\otimes c)=\phi_{W,v,e}(a)\otimes b\otimes c.
\]

\end{defn}

\subsection{Definitions of natural transformations $\psi:\beta\to R$ and $\mathcal{L}_{v,e}:B\to R$}

For $f\in\mathcal{M}_{W}$, $\bm{s}=(s_{v})_{v\in\bar{W}\cap S_{\infty}}\in\mathbb{C}^{\#(\bar{W}\cap S_{\infty})}$
and $y\in N_{\bar{W}}$, define the Dirichlet series $\zeta_{f,y}(\bm{s})$
by
\[
\zeta_{f,y}(\bm{s})=\sum_{x\in i^{-1}(y)}f(x)\prod_{v\in\bar{W}_{\infty}}\left|x\right|_{v}^{-s_{v}}
\]
where $i:F^{\times}\to N_{\bar{W}}$ is the diagonal map. The Dirichlet
series $\zeta_{f,y}(\bm{s})$ converges absolutely if $\Re s_{v}>1$
for all $v\in\bar{W}\cap S_{\infty}$. The following statement can
be proved by the almost same way as that used in \cite[Section 6.1]{MR2420508}. 
\begin{prop}
The Dirichlet series $\zeta_{f,y}(\bm{s})$ has an analytic continuation
to a meromorphic function on the hole $\mathbb{C}^{\overline{W}_{\infty}}$,
and is holomorphic at $\bm{s}=\bm{0}$. Furthermore $\zeta_{f,y}(\bm{0})\in\mathbb{Z}$.
\end{prop}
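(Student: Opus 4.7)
The plan is to reduce the assertion to the well-known analytic properties of Shintani zeta functions, together with the integrality afforded by the $T$-smoothing.

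First, by $\mathbb{Z}$-linearity one can expand $f\in\mathcal{M}_W$ as a $\mathbb{Z}$-linear combination of products $\bm{1}_{C(x_1,\dots,x_m)}\cdot\bm{1}_{\mathfrak{b}}$, where $C(x_1,\dots,x_m)$ is a rational cone generated by vectors of some $F_g$ with $g\in X_{\bar W\cap S_\infty}$, and $\mathfrak{b}$ runs through fractional ideals occurring in the expansion of a generator $\mathfrak{a}\prod_{\mathfrak{p}\in \bar W\cap S_f}(1-[\mathfrak{p}])\prod_{\mathfrak{p}\in T}(1-N\mathfrak{p}[\mathfrak{p}])$ of $\mathscr{A}_{\bar W\cap S_f}$. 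It is essential to keep the $T$-smoothing intact throughout, since it is needed for integrality at $\bm{s}=\bm{0}$. The constraint $i(x)=y$ should then be unwound: at each $v\in\bar W\cap S_f$ it imposes a congruence condition on $x$ modulo the conductor of $J_v$, while at each $v\in\bar W\cap S_\infty$ it prescribes the sign of $x$ under $v$. Combining these with the intersection of the rational cone with the lattice $\mathfrak{b}$, the resulting inner sum takes the classical Shintani form
\[
\zeta_C^{\mathrm{Sh}}(\bm{s};\alpha,L)=\sum_{x\in(\alpha+L)\cap C(x_1,\dots,x_m)}\prod_{v\in\bar W_\infty}|x|_v^{-s_v},
\]
with $L\subset F$ a lattice and $\alpha\in F$ a shift, possibly restricted further by fixed signs at the infinite places.

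By Shintani's theorem (as reviewed in \cite[Section 6.1]{MR2420508}), each such zeta function admits a meromorphic continuation to $\mathbb{C}^{\bar W_\infty}$, with poles only along the coordinate hyperplanes $s_v=1$; in particular it is holomorphic at $\bm{s}=\bm{0}$, where its value is an explicit polynomial in Bernoulli polynomials evaluated at the rational barycentric coordinates of $\alpha$ with respect to $x_1,\dots,x_m$. Summing the finitely many pieces produced in the first step yields the meromorphic continuation and holomorphy of $\zeta_{f,y}$ at $\bm{s}=\bm{0}$, together with an \emph{a priori} formula for $\zeta_{f,y}(\bm{0})$ in $\mathbb{Q}$. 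Integrality $\zeta_{f,y}(\bm{0})\in\mathbb{Z}$ is then deduced from the $T$-smoothing: each factor $(1-N\mathfrak{p}[\mathfrak{p}])$ in the generator of $\mathscr{A}_{\bar W\cap S_f}$ produces a difference of Shintani values that clears the denominators, as in the Deligne--Ribet \cite{MR579702} treatment of the Stickelberger function; the hypotheses on $T$ provided by (C1) (in particular the existence of a degree-one prime $\eta$ with $N\eta\ge n+2$, or of two such primes) are exactly what is needed to make this cancellation work uniformly.

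The main obstacle is the bookkeeping in the first step: correctly tracking how the conditions imposed by $i(x)=y$ and the fractional ideal $\mathfrak{b}$ package together into a prescribed lattice $L$ and shift $\alpha$, and verifying that the $T$-smoothing is preserved throughout this translation. Once $\zeta_{f,y}(\bm{s})$ is expressed as an explicit $\mathbb{Z}$-linear combination of $T$-smoothed Shintani zeta functions, all three assertions of the proposition reduce to standard results in the literature, essentially by the argument of \cite[Section 6.1]{MR2420508}.
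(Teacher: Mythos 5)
Your proposal is essentially the argument the paper intends: the paper gives no proof of its own but defers to \cite[Section 6.1]{MR2420508}, and your reduction of $\zeta_{f,y}$ to a finite $\mathbb{Z}$-linear combination of $T$-smoothed Shintani cone zeta functions (analytically continued via Shintani, with rational values at $\bm{s}=\bm{0}$ made integral by the $T$-factors) is precisely the content of that reference. You also correctly identify that the hypotheses on $T$ in (C1) enter only at the integrality step, which the paper explicitly flags.
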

The conditions concerning $T$ in Condition (C1) are used only for
this proposition. Since $\#\{y\in N_{\bar{W}}\mid\zeta_{f,y}(\bm{0})\neq0\}<\infty$,
the sum 
\[
\zeta_{f}(\bm{s})=\sum_{y\in N_{\bar{W}}}\zeta_{f,y}(\bm{s})[y^{-1}]\in\mathbb{Z}[N_{\bar{W}}]
\]
is well-defined. Put $\zeta_{f}=\zeta_{f}(\bm{0})\in\mathbb{Z}[N_{\bar{W}}]$.
We define the natural transformation $\psi$ from $\beta$ to $R$
by
\[
\psi(W)(f\otimes a\otimes b)=i_{1}(\zeta_{f})i_{2}(a)i_{3}(b)\ \ \ ((f,a,b)\in\mathcal{M}_{W}\times\mathbb{Z}[N^{S}]\times\mathbb{Z}[F^{\times}])
\]
where $i_{1}:\mathbb{Z}[N_{\bar{W}}]\to\mathbb{Z}[N_{F}/\prod_{v\in V\setminus W}N_{v}]$,
$i_{2}:\mathbb{Z}[N^{S}]\to\mathbb{Z}[N_{F}/\prod_{v\in V\setminus W}N_{v}]$
and $i_{3}:\mathbb{Z}[F^{\times}]\to\mathbb{Z}[N_{F}/\prod_{v\in V\setminus W}N_{v}]$
are natural map. This map is well-defined because $i_{1}(\zeta_{xf})=i_{S}(x)^{-1}i_{1}(\zeta_{f})$,
$i_{2}(xa)=i^{S}(x)i_{2}(a)$, and $i_{3}(xb)=xi_{3}(b)$ for $x\in F_{(T)}$.
We define a natural transformation $\mathcal{L}_{v,e}$ from $B$
to $R$ by $\mathcal{L}_{v,e}=\psi\circ\xi_{v,e}$.

\subsection{Definition of $\vartheta$.}

Put $X_{\infty}=X_{S_{\infty}}$. We denote by $i_{\infty}:F^{\times}\to X_{\infty}$
the natural diagonal map. Put $g_{0}=i_{\infty}(1)\in X_{\infty}$.
For a subgroup $A\subset F^{\times}$, put
\begin{align*}
Z_{A} & =\{D\in\mathcal{C}(F^{\times})\mid d_{n}(D)\in I_{A,A}\otimes_{\mathbb{Z}[A]}C_{n-1}(F^{\times})\}\\
U_{A} & =I_{A,A}\otimes_{\mathbb{Z}[A]}\mathcal{C}(F^{\times}).
\end{align*}
From a definition of group homology, there is a canonical isomorphism
\[
Z_{A}/U_{A}\simeq H_{n-1}(A,\mathbb{Z}).
\]
Put 
\[
\gamma_{A}:Z_{A}\to Z_{A}/U_{A}\simeq H_{n-1}(E_{+},\mathbb{Z}).
\]
 For a subgroup $A\subset F^{\times}$ and $c\in H_{n-1}(A,\mathbb{Z})$,
we put
\[
Y(c)=\{D\in Z_{A}\mid\gamma_{A}(D)=c\}.
\]
The following theorem is proved in \cite{MR3198753}.
\begin{thm}
\label{thm:SignedFD}There exists a canonical generator $\eta$ of
$H_{n-1}(E_{+},\mathbb{Z})$ such that
\[
\sum_{\epsilon\in E_{+}}\Xi_{v,e}(D)(x\epsilon)=\begin{cases}
1 & i_{\infty}(x)=g_{0}\\
0 & {\rm otherwise}
\end{cases}
\]
for all $D\in\mathcal{C}(F_{g_{0}})\cap Y(\eta)$ and $x\in F^{\times}$.
\end{thm}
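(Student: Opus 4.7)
The plan is to establish this as a cohomological reformulation of the Shintani signed fundamental domain theorem, essentially following \cite{MR3198753}. The first step is to show that the quantity $S(x) := \sum_{\epsilon \in E_+} \Xi_{v,e}(D)(x\epsilon)$ depends only on the class $\gamma_{E_+}(D) \in H_{n-1}(E_+,\mathbb{Z})$, not on the particular lift $D \in Y(\eta)$. The cocycle relation of Theorem \ref{thm:cocycle-relation} shows that $S(x)$ is unchanged when $D$ is modified by $d_{n+1}(\Lambda)$; and because $\Xi_{v,e}$ is $\mathbb{Z}[F^\times]$-equivariant and $F^\times$ is abelian, changing $D$ by $(\epsilon'-1)\cdot D''$ with $\epsilon' \in E_+$ produces a difference that vanishes after the substitution $\epsilon \mapsto \epsilon'\epsilon$ in the orbit sum. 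Hence $S(x)$ factors through $Z_{E_+}/U_{E_+} \cong H_{n-1}(E_+,\mathbb{Z})$, and it suffices to verify the identity on any convenient representative of $\eta$.

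I would then identify the canonical generator $\eta$. Dirichlet's unit theorem gives $E_+ \cong \mathbb{Z}^{n-1}$, and hence $H_{n-1}(E_+,\mathbb{Z}) \cong \bigwedge^{n-1} E_+ \cong \mathbb{Z}$. The canonical generator is the one induced by the orientation of the logarithmic embedding $\log_\infty : E_+ \otimes \mathbb{R} \xrightarrow{\sim} \{(a_v)_{v \in S_\infty} \in \mathbb{R}^{S_\infty} : \sum_v a_v = 0\}$ with respect to a fixed ordering of the archimedean places (equivalently, the orientation that makes the regulator positive).

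The heart of the argument is to exhibit an explicit representative $D_0 \in \mathcal{C}(F_{g_0}) \cap Y(\eta)$ on which $\Xi_{v,e}(D_0)$ can be computed. A (signed) Shintani decomposition of the totally positive cone $F^\times_+ \subset \rho^{-1}(\mathbb{R}_{>0})$ produces a finite collection of rational simplicial cones $C(x_1^{(i)}, \ldots, x_n^{(i)})$ with all $x_j^{(i)} \in F_{g_0}$, chosen so that the $E_+$-translates of the union tile $F^\times_+$ (up to lower-dimensional overlaps). Forming $D_0 = \sum_i \pm (x_1^{(i)}, \ldots, x_n^{(i)}) \llbracket g_0 \rrbracket$ with appropriate signs, one verifies (i) that $\gamma_{E_+}(D_0) = \eta$ by unwinding the isomorphism (\ref{eq:grouphomology}) and matching the chosen orientation, and (ii) that the $w$-perturbation in the definition of $\Xi_{v,e}$ turns $\Xi_{v,e}(D_0)$ into the genuine (unsigned) indicator of a true fundamental domain for the $E_+$-action on $F^\times_+$. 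Summing over $E_+$ then yields $1$ on $F^\times_+ = i_\infty^{-1}(g_0)$ and $0$ elsewhere.

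The main obstacle is step (ii): showing that the signed cocycle values on the shared boundary faces of adjacent cones cancel exactly in the orbit sum, so that each totally positive $x$ is counted with multiplicity one regardless of the particular Shintani decomposition chosen. This is precisely the subtle point that the perturbation $y \mapsto y \pm \epsilon w$ is designed to handle: it systematically resolves the boundary ambiguities in a manner compatible with the orientation class $\eta$, which is the substantive content of the Charollois--Dasgupta--Greenberg construction in \cite{MR3198753}, from which our theorem follows directly.
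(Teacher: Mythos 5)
Your proposal correctly identifies that this theorem is precisely a reformulation of the signed fundamental domain result of Charollois--Dasgupta--Greenberg, and your sketch of the reduction (well-definedness of the orbit sum on the homology class $\gamma_{E_+}(D)$ via the cocycle relation and $F^\times$-equivariance, an explicit Shintani-decomposition representative, and boundary cancellation handled by the $w$-perturbation) matches the structure of that argument. The paper itself offers no proof and simply cites \cite{MR3198753}, which is exactly where your argument also terminates, so the two take essentially the same approach.
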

We put $\delta_{S,T}=\prod_{\mathfrak{p}\in S_{f}}(1-[\mathfrak{p}])\prod_{\mathfrak{p}\in T}(1-N(\mathfrak{p})[\mathfrak{p}])$.
We denote by $C_{F}^{+}$ the narrow class group of $F$. 
\begin{defn}
For a narrow ideal class $C\in C_{F}^{+}$, define $\vartheta_{c}\in B(V)/IB(V)$
by
\[
\vartheta_{C}=\left(D\llbracket g_{0}\rrbracket\otimes\mathfrak{a}^{-1}\delta_{S,T}\right)\otimes\iota(\mathfrak{a})\otimes1
\]
where $\mathfrak{a}\in C\cap\mathsf{I}_{(T)}$ and $D\in\mathcal{C}(F_{g_{0}})\cap Y(\eta)$.
This definition does not depend on the choice of $\mathfrak{a}$ and
$D$.
\end{defn}

\begin{defn}
We define $\vartheta\in B(V)/IB(V)$ by
\end{defn}
\[
\vartheta=\sum_{C\in C_{F}^{+}}\vartheta_{C}.
\]

\subsection{Proof that $(B,\mathcal{L},\vartheta)$ is a Shintani datum.}
\begin{prop}
\label{prop:rVanish}Let $a\in B(V)$ be a lift of $\vartheta$. For
all $v\in V$, ${\rm r}_{V\setminus\{v\}}^{V}(a)\in IB(V\setminus\{v\})$.\end{prop}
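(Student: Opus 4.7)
The plan is to verify the claim case by case according to whether $v\in V$ is finite or infinite, in both cases using (i) the explicit form $\vartheta=\sum_{C\in C_F^+}\vartheta_C$ and (ii) the independence of each $\vartheta_C$ modulo $IB(V)$ from the chosen representative $\mathfrak a\in C\cap\mathsf I_{(T)}$ and fundamental chain $D\in\mathcal C(F_{g_0})\cap Y(\eta)$.

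If $v$ is a finite prime, the restriction affects only the $\mathscr A$-factor, via $\mathscr A_{S_f}\hookrightarrow\mathscr A_{S_f\setminus\{v\}}$. I will factor $\delta_{S,T}=(1-[v])\,\delta'_{S,T}$ with $\delta'_{S,T}:=\prod_{\mathfrak p\in S_f\setminus\{v\}}(1-[\mathfrak p])\prod_{\mathfrak p\in T}(1-N\mathfrak p[\mathfrak p])\in\mathscr A_{S_f\setminus\{v\}}$, which makes $r_{V\setminus\{v\}}^V(\vartheta_C)=A_C-B_C$. The key arithmetic input is that $\iota(v)=1$ in $N^S$ (since $v\in S$ contributes trivially to the restricted product $N^S$); this lets me rewrite $B_C$ in the same shape as $A_C$ but with representative $\mathfrak b_C:=v^{-1}\mathfrak a_C$ of the class $[v^{-1}]C$. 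Because $\{\mathfrak a_C\}$ and $\{\mathfrak b_C\}$ are both full systems of representatives of $C_F^+$, the same $F^\times$-equivariance argument proving $\vartheta_C$ independent of $\mathfrak a$ (applied in $B(V\setminus\{v\})$ with $\delta'_{S,T}$ in place of $\delta_{S,T}$) yields $\sum_C A_C\equiv\sum_C B_C\pmod{IB(V\setminus\{v\})}$, hence $r_{V\setminus\{v\}}^V(\vartheta)\equiv 0$.

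If $v$ is an infinite place, the restriction instead acts on the $\mathscr K$-factor by forgetting the sign at $v$: $D\llbracket g_0\rrbracket\mapsto D\llbracket g_0'\rrbracket$ with $g_0':=g_0|_{S_\infty\setminus\{v\}}$ and $D\in\mathcal C(F_{g_0})\subset\mathcal C(F_{g_0}\sqcup F_{g_v})=\mathcal C(F_{g_0'})$. I will choose $\epsilon\in F_{(T)}$ with sign pattern $(-)_v(+)_{v'\neq v}$ by weak approximation. Since $F^\times$ acts trivially on $B(V\setminus\{v\})/IB$, we have $\epsilon\cdot r(\vartheta_C)\equiv r(\vartheta_C)$; applying the defining relation of $B$ and the sign rule $[\epsilon^{-1}](D\llbracket g_0'\rrbracket)=-(\epsilon^{-1}D)\llbracket g_0'\rrbracket$ (coming from $\mathrm{sgn}(\epsilon^{-1})=-1$), summing over $C$, and reindexing by $C'=[\epsilon]C\in C_F^+$ with representative-independence as in the finite case leads to
\[
r_{V\setminus\{v\}}^V(\vartheta)\equiv -\sum_{C'}((\epsilon^{-1}D)\llbracket g_0'\rrbracket\otimes\mathfrak a_{C'}^{-1}\delta_{S,T})\otimes\iota(\mathfrak a_{C'})\otimes 1\pmod{IB(V\setminus\{v\})}.
\]

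The hard part will be closing the infinite-place case: identifying the right-hand side above with $-r_{V\setminus\{v\}}^V(\vartheta)$, or equivalently showing
\[
\sum_{C}((D+\epsilon^{-1}D)\llbracket g_0'\rrbracket\otimes\mathfrak a_C^{-1}\delta_{S,T})\otimes\iota(\mathfrak a_C)\otimes 1\in IB(V\setminus\{v\}).
\]
Here $D$ and $\epsilon^{-1}D$ are signed fundamental domains for the $E_+$-action on $F_{g_0}$ and $F_{g_v}$ respectively, so $D+\epsilon^{-1}D\in Z_{E_+}\cap\mathcal C(F_{g_0'})$ is a signed fundamental domain for $E_+$ on the enlarged set $F_{g_0'}$. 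I expect the required cancellation to follow from Theorem \ref{thm:SignedFD}, by combining its evaluation of the Shintani cocycle on $D+\epsilon^{-1}D$ with the fact that in $\mathscr K_{S_\infty\setminus\{v\}}$ the sign distinction at $v$ has been collapsed, producing enough $E_+$-symmetry to push the element into $U_{E_+}$ modulo $\ker d_n$.
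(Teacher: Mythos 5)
Your finite case matches the paper's argument in substance: factor $\delta_{S,T}=(1-[\mathfrak p])\delta_{S',T}$, use $\iota(\mathfrak p)=1$ for $\mathfrak p$ above $v\in S$, and telescope over $C_F^+$ via the class $\mathfrak p^{-1}C$.

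The infinite case has a genuine gap. The paper splits it into two sub-cases: \emph{(2)} there exists a unit $\epsilon\in E$ with $\mathrm{sgn}(\epsilon)_{v'}=-1$ exactly at $v'=v$; \emph{(3)} no such unit. Your argument with $\epsilon\in F_{(T)}$ chosen by weak approximation only covers the analogue of (3). Note that the narrow ideal class $(\epsilon)\in C_F^+$ does not depend on which $\epsilon\in F_{(T)}$ with the prescribed sign pattern you pick, and $(\epsilon)$ is trivial precisely when you are in sub-case (2). When $(\epsilon)$ is nontrivial, it has order exactly $2$, and one can pair classes as the paper does via $C_F^+=P\sqcup(\epsilon)P$, using $D-\epsilon^{-1}D\in U_{E_+}$ to kill each pair; this is sound. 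But when $(\epsilon)$ is trivial, the reindexing $C\mapsto(\epsilon)C$ is the identity, and the same manipulation only yields $2\,r^V_{V\setminus\{v\}}(\vartheta_C)\equiv 0$ for every $C$, which is not the statement unless $B(V\setminus\{v\})/IB(V\setminus\{v\})$ is known to be $2$-torsion-free (it is not obviously so). The paper's case (2) is handled by an independent group-homology argument: one passes to $E'=\langle E_+,\epsilon\rangle$, chooses $D'$ with $\gamma_{E'}(D)=\gamma_{E'}(2D')$, and writes $D=(D-D'')+D''$ with $D''=(1+\epsilon)D'$, placing $D-D''\in U_{E_+}$ and $D''\llbracket g_0'\rrbracket\in I_{E',E'}\otimes_{\mathbb Z[E']}\mathcal C(F_{g_0'})\llbracket g_0'\rrbracket$.

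Your description of the ``hard part'' is also internally inconsistent and cannot close the argument. You already derived $r^V_{V\setminus\{v\}}(\vartheta)\equiv -\sum_{C'}(\epsilon^{-1}D\,\cdots)$, so the displayed statement $\sum_C\bigl((D+\epsilon^{-1}D)\llbracket g_0'\rrbracket\otimes\cdots\bigr)\in IB(V\setminus\{v\})$ is a literal restatement of that derivation, not a new lemma — proving it gives you nothing further. Worse, the route through Theorem \ref{thm:SignedFD} cannot work as stated: as you yourself observe, $D+\epsilon^{-1}D$ is a signed fundamental domain for the $E_+$-action on $F_{g_0'}$, so $\gamma_{E_+}(D+\epsilon^{-1}D)=2\eta\neq 0$ in $H_{n-1}(E_+,\mathbb Z)$, which means $D+\epsilon^{-1}D\notin U_{E_+}$; there is no cancellation of that form. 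What one actually needs is $D-\epsilon^{-1}D\in U_{E_+}$ (true because multiplication by $\epsilon^{-1}$ induces the identity on $H_{n-1}(E_+,\mathbb Z)$), and this must be deployed pairwise over a transversal $P$ of $\langle(\epsilon)\rangle$ in $C_F^+$ — which is exactly where sub-case (2) obstructs you.
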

\begin{proof}
Fix $v\in V$ and $D\in\mathcal{C}(F_{g_{0}})\cap Y(\eta)$. Let $f:B(V)/IB(V)\to B(V\setminus\{v\})/IB(V\setminus\{v\})$
be a homomorphism induced by ${\rm r}_{V\setminus\{v\}}^{V}$. What
we want to prove is that $f(\vartheta)=0$. We will consider the following
three cases:
\begin{enumerate}
\item $v$ is finite,
\item $v$ is infinite and there exists $\epsilon\in E$ such that $(\epsilon)_{v}<0$
and $(\epsilon)_{w}>0$ for all $w\in S_{\infty}-\{v\}$,
\item $v$ is infinite and there exists no such a $\epsilon\in E$.
\end{enumerate}
In the case (1), put $\mathfrak{p}=v$. Put $\delta_{S',T}=\prod_{\mathfrak{q}\in S_{f}\setminus\{v\}}(1-[\mathfrak{q}])\prod_{\mathfrak{q}\in T}(1-N(\mathfrak{q})[\mathfrak{q}])$.
For $C\in C_{F}^{+}$, we put
\[
\vartheta_{C}^{'}=\left(D\llbracket g_{0}\rrbracket\otimes\mathfrak{a}^{-1}\delta_{S',T}\right)\otimes\iota(\mathfrak{a})\otimes1\in B(V\setminus\{v\})/IB(V\setminus\{v\})
\]
where $\mathfrak{a}\in C\cap\mathsf{I}_{(T)}$. Then $\vartheta_{C}'$
does not depend on the choice of $\mathfrak{a}$. Since
\begin{align*}
f(\vartheta_{C}) & =\left(D\llbracket g_{0}\rrbracket\otimes\mathfrak{a}^{-1}\delta_{S,T}\right)\otimes\iota(\mathfrak{a})\otimes1\\
 & =\left(D\llbracket g_{0}\rrbracket\otimes(\mathfrak{a}^{-1}-\mathfrak{pa}^{-1})\delta_{S,T}\right)\otimes\iota(\mathfrak{a})\otimes1\\
 & =\vartheta_{C}^{'}-\vartheta_{\mathfrak{p}^{-1}C}',
\end{align*}
we have
\[
f(\vartheta)=\sum_{C\in C_{F}^{+}}\vartheta_{C}'-\sum_{C\in C_{F}^{+}}\vartheta_{\mathfrak{p}^{-1}C}'=0.
\]

In the case (2), let $E'$ be a subgroup of $E$ spanned by $E_{+}\cup\{\epsilon\}$.
Note that $E'$ is a free abelian group. Let $g_{0}'\in X_{S_{\infty}\setminus\{v\}}$
be a projection of $g_{0}$. It is enough to prove that $D\llbracket g_{0}'\rrbracket\in I_{E',E'}\otimes_{\mathbb{Z}[E']}\left(\mathcal{C}(F_{g_{0}'})\llbracket g_{0}'\rrbracket\right)$.
There exists an element $D'$ of $\mathcal{C}(F_{g_{0}'})\cap Z_{E'}$
such that
\begin{equation}
\gamma_{E'}(D)=\gamma_{E'}(2D').\label{eq:d0}
\end{equation}
Put $D''=(1+\epsilon)D'$. We have $D''\in Z_{E'}$ because 
\[
(1+[\epsilon])I_{E',E_{+}}\subset I_{E_{+},E_{+}}.
\]
Thus we have
\begin{equation}
D-D''\in Z_{E+}.\label{eq:d1}
\end{equation}
From (\ref{eq:d0}), we have
\begin{equation}
\gamma_{E'}(D-D'')=0.\label{eq:d2}
\end{equation}
From (\ref{eq:d1}) and (\ref{eq:d2}), we have $D-D''\in U_{E_{+}}$.
Thus we have
\begin{equation}
(D-D'')\llbracket g_{0}'\rrbracket\in I_{E_{+},E_{+}}\otimes_{\mathbb{Z}[E_{+}]}\left(\mathcal{C}(F_{g_{0}'})\llbracket g_{0}'\rrbracket\right).\label{eq:d3}
\end{equation}
Since $D''=(1+\epsilon)D'$, we have
\begin{align}
D''\llbracket g_{0}'\rrbracket & =([1]-[\epsilon])\left(D'\llbracket g_{0}'\rrbracket\right)\label{eq:d4}\\
 & \in I_{E',E'}\otimes_{\mathbb{Z}[E']}\left(\mathcal{C}(F_{g_{0}'})\llbracket g_{0}'\rrbracket\right).\nonumber 
\end{align}
From (\ref{eq:d3}) and (\ref{eq:d4}), we have
\[
D\llbracket g_{0}'\rrbracket\in I_{E',E'}\otimes_{\mathbb{Z}[E']}\left(\mathcal{C}(F_{g_{0}'})\llbracket g_{0}'\rrbracket\right).
\]
Thus $f(\vartheta)=0$.

In the case (3), let $x\in F_{(T)}$ be any element such that 
\[
{\rm sgn}((x)_{v'})=\begin{cases}
-1 & v'=v\\
1 & v'\neq v
\end{cases}
\]
for $v'\in S_{\infty}$. Then, for all $C\in C_{F}^{+}$, we have
\[
f(\vartheta_{C})+f(\vartheta_{(x)C})=0
\]
since
\begin{align*}
\vartheta_{(x)C} & =\left(D\llbracket g_{0}\rrbracket\otimes(x\mathfrak{a})^{-1}\delta_{S,T}\right)\otimes\iota(x\mathfrak{a})\otimes1\\
 & =-\left(xD\llbracket xg_{0}\rrbracket\otimes(\mathfrak{a})^{-1}\delta_{S,T}\right)\otimes\iota(\mathfrak{a})\otimes1\ \ \ \ (\mathfrak{a}\in C\cap\mathsf{I}_{(T)})
\end{align*}
and
\[
D-xD\in I_{E_{+},E_{+}}\otimes_{\mathbb{Z}[E_{+}]}\mathcal{C}(F_{g_{0}'})\ \ \ \ (g_{0}'\in X_{S_{\infty}\setminus\{v\}}\text{ is the projection of}\ g_{0}).
\]
Take $P\subset C_{F}^{+}$ such that $C_{F}^{+}=P\sqcup(x)P$. We
have 
\[
f(\vartheta)=\sum_{C\in P}\left(f(\vartheta_{C})+f(\vartheta_{(x)C})\right)=0.
\]

\end{proof}
In the proofs of Lemma \ref{lem:VanishingHomology} and Proposition
\ref{prop:ExactHor}, we use Shapiro's lemma many times.
\begin{lem}
\label{lem:VanishingHomology}For all $W\subsetneq S_{\infty}$ and
$i>0$, we have $H_{i}(E,\mathscr{K}_{W})=0$.\end{lem}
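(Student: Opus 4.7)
The plan is to apply Shapiro's lemma to reduce from $E$ to the stabilizer $E_W$ of a point of $X_W$, then dimension-shift through the acyclic complex $C_\bullet(F_g)$, and finally invoke the cohomological dimension of a free abelian group.

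First, the differential $d_n$ preserves the $g$-grading of $C_{n,W}=\bigoplus_g C_n(F_g)\llbracket g\rrbracket$, so $\mathscr{K}_W=\bigoplus_g \mathscr{K}_{W,g}\llbracket g\rrbracket$ with $\mathscr{K}_{W,g}:={\rm Im}(d_n|_{C_n(F_g)})$. Since $X_W=\prod_{v\in W}\{\pm 1\}$ is a single $E$-torsor for the translation action, every $g\in X_W$ has the same stabilizer $E_W:=\{u\in E : u>0\text{ at every }v\in W\}$. Tracking the sign factor in $[u](\Lambda\llbracket g\rrbracket)={\rm sgn}(u)(u\Lambda)\llbracket ug\rrbracket$, each $E$-orbit summand of $\mathscr{K}_W$ is $E$-equivariantly isomorphic to the induced module ${\rm Ind}_{E_W}^E \mathscr{K}_{W,g_0}$ via $u_i\otimes\xi \leftrightarrow {\rm sgn}(u_i)\,u_i\xi\,\llbracket u_i g_0\rrbracket$ on coset representatives $u_i$ of $E/E_W$. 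By Shapiro's lemma, the claim reduces to showing $H_i(E_W,\mathscr{K}_{W,g_0})=0$ for all $i>0$.

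For $W\neq\emptyset$ (the case actually needed in the construction, since $\bar W\cap S_\infty\supset S_\infty\setminus V\neq\emptyset$ by (C1)), one has $-1\notin E_W$, and since the only torsion in $\mathcal{O}_F^\times$ for totally real $F$ is $\pm 1$, the group $E_W$ is torsion-free. Combined with $[E:E_W]\leq 2^{|W|}$ (Dirichlet), this gives $E_W\cong\mathbb{Z}^{n-1}$ with $n=[F:\mathbb{Q}]$. Moreover each $C_k(F_g)$ is a free $\mathbb{Z}[E_W]$-module for the twisted action: $E_W$ acts freely on $F_g$ by multiplication, hence diagonally freely on $(F_g)^{k+1}$, and the sign twist preserves freeness because $\mathbb{Z}[E_W]$ with the twisted action is isomorphic to $\mathbb{Z}[E_W]$ via $e\mapsto {\rm sgn}(e)\,e$. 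Consequently the augmented complex $\cdots\to C_1(F_g)\to C_0(F_g)\to \mathbb{Z}({\rm sgn}|_{E_W})\to 0$ is a free $\mathbb{Z}[E_W]$-resolution of the sign character, in which $\mathscr{K}_{W,g}=\ker(d_{n-1})$ sits as the $n$-th syzygy.

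Dimension shifting then yields $H_i(E_W,\mathscr{K}_{W,g})\cong H_{i+n}(E_W,\mathbb{Z}({\rm sgn}|_{E_W}))$ for all $i\geq 1$, and since $E_W\cong\mathbb{Z}^{n-1}$ has cohomological dimension $n-1$, the right-hand side vanishes. The main technical obstacle will be the careful sign bookkeeping needed both for the Shapiro identification of orbit summands with induced modules and for the freeness verification of $C_k(F_g)$; once that is handled, the vanishing follows cleanly from the cohomological dimension of $\mathbb{Z}^{n-1}$.
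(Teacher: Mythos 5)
Your proof follows essentially the same route as the paper: reduce via Shapiro's lemma to homology of the stabilizer $E_W=\ker(E\to X_W)$, dimension-shift along the acyclic complex $C_\bullet$, and conclude from vanishing of homology of a free abelian group of rank $n-1$ above degree $n-1$. The only structural difference is the order of the two reduction steps (the paper first dimension-shifts $H_i(E,\mathscr{K}_W)\cong H_{i+n-1}(E,C_{0,W})$, then applies Shapiro to $C_{0,W}$, whereas you do Shapiro first), and you invoke the cohomological dimension of $\mathbb{Z}^{n-1}$ directly rather than reproducing the paper's case split on whether ${\rm sgn}|_U$ is trivial. The substantive point you add is the restriction to $W\neq\emptyset$: you correctly observe that this is what makes $-1\notin E_W$, so $E_W$ is torsion-free and hence $\cong\mathbb{Z}^{n-1}$. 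The paper instead asserts that ``$U$ is a free abelian group of rank $n-1$ since $W\neq S_\infty$'', which is the wrong condition — for $W=\emptyset$ one has $U=E\ni -1$, and the lemma as stated actually fails there (e.g.\ for $F$ real quadratic with a unit of norm $-1$, dimension shifting gives $H_1(E,\mathscr{K}_\emptyset)\cong H_2(E,\mathbb{Z}_{\rm sgn})=\mathbb{Z}/2$). Your remark that (C1) forces $\bar W\cap S_\infty\supset S_\infty\setminus V\neq\emptyset$ in every application (Proposition \ref{prop:ExactVer}) is exactly what repairs this. Two cosmetic slips that do not affect the conclusion: $X_W$ is generally not an $E$-torsor (the map $E\to X_W$ need not be surjective, though being a homomorphism it still gives equal stabilizers $E_W$ at every point), and in the paper's normalization where $C_0(F_g)\cong\mathbb{Z}$ carries the sign action the shift is by $n-1$, giving $H_i(E_W,\mathscr{K}_{W,g})\cong H_{i+n-1}(E_W,\mathbb{Z}({\rm sgn}))$, not $i+n$; either index exceeds $n-1$, so the vanishing argument goes through regardless.
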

\begin{proof}
Fix $W\subsetneq S_{\infty}$ and $i>0$. From the definition, the
sequence
\[
\cdots\to C_{n+2,W}\to C_{n+1,W}\to C_{n,W}\to\mathscr{K}_{W}\to0
\]
is a free resolution of $\mathscr{K}_{W}$ in the category of $\mathbb{Z}[E]$-module.
Thus we have
\[
H_{i}(E,\mathscr{K}_{W})=H_{i+n-1}(E,C_{0,W}).
\]
The set $X_{W}$ can be written as
\[
X_{W}=g_{1}(E/E_{+})\sqcup\cdots\sqcup g_{s}(E/E_{+})
\]
where $g_{1},\dots,g_{s}\in X_{W}$. For $j=1,\dots,s$, put
\[
A_{j}=\mathbb{Z}\llbracket g_{j}\rrbracket\subset C_{0,W}
\]
and
\[
A_{j}'=\bigoplus_{g\in g_{j}(E/E_{+})}\mathbb{Z}\llbracket g\rrbracket\subset C_{0,W}.
\]
Put $U=\ker(E\to X_{W})$. Note that $U$ is a free abelian group
of rank $n-1$ since $W\neq S_{\infty}$. We have
\begin{align*}
H_{i+n-1}(E,C_{0,W}) & =\bigoplus_{j=1}^{s}H_{i+n-1}(E,A_{j}')\\
 & =\bigoplus_{j=1}^{s}H_{i+n-1}(E,A_{j}\otimes_{\mathbb{Z}[U]}\mathbb{Z}[E])\\
 & =\bigoplus_{j=1}^{s}H_{i+n-1}(U,A_{j}).
\end{align*}
Fix $1\leq j\leq s$. Note that the underlying $\mathbb{Z}$-module
of $A_{j}$ is isomorphic to $\mathbb{Z}$ and the action of $U$
to $A_{j}$ is defined by
\[
xa={\rm sgn}(x)a\ \ \ \ ((x,a)\in U\times A_{j}).
\]
Put $U'=\ker(U\xrightarrow{{\rm sgn}}\{\pm1\})$. If $U'=U$ then
\[
H_{i+n-1}(U,A_{j})\simeq\wedge^{i+n-1}U=0.
\]
If $U'\neq U$ then there exists a short exact sequence
\[
0\to\mathbb{Z}\to\mathbb{Z}\otimes_{\mathbb{Z}[U']}\mathbb{Z}[U]\to A_{j}\to0
\]
of $\mathbb{Z}[U]$-modules. This short exact sequence induces the
following exact sequence.
\[
\cdots\to H_{i+n-1}(U',\mathbb{Z})\to H_{i+n-1}(U,A_{j})\to H_{i+n-2}(U,\mathbb{Z})\to H_{i+n-2}(U',\mathbb{Z})\to\cdots.
\]
Here we have
\[
H_{i+n-1}(U',\mathbb{Z})=0
\]
and
\[
\ker\left(H_{i+n-2}(U,\mathbb{Z})\to H_{i+n-2}(U',\mathbb{Z})\right)=0.
\]
Thus $H_{i+n-1}(U,A_{j})=0$ and the claim is proved.\end{proof}
\begin{prop}
\label{prop:ExactVer}$H_{i}(F^{\times},B(W))=0$ for all $i>0$ and
$W\subset V$.\end{prop}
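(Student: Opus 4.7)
The plan is to apply Shapiro's lemma three times in order to peel off the three tensor factors in the definition of $B(W)$, reducing to the content of Lemma~\ref{lem:VanishingHomology}. First I would observe that the defining relation $\mathscr{N}$ of $B(W)$ is exactly the relation that makes $B(W)$ a tensor product over $\mathbb{Z}[F_{(T)}]$: setting $\widetilde{M}_W := \mathscr{M}_W \otimes \mathbb{Z}[N^S]$ with the diagonal $F_{(T)}$-action $x \cdot (a \otimes b) = xa \otimes i^S(x)^{-1} b$, one obtains a canonical isomorphism $B(W) \cong \mathrm{Ind}_{F_{(T)}}^{F^\times}(\widetilde{M}_W)$, so Shapiro's lemma reduces the problem to showing $H_i(F_{(T)}, \widetilde{M}_W) = 0$ for $i > 0$.

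Next I would use that $\mathbb{Z}[N^S]$ is a permutation $\mathbb{Z}[F_{(T)}]$-module via $i^S$: by condition~(C1), $S \cap T' = \emptyset$, so every point of $N^S$ has stabilizer $\ker(i^S|_{F_{(T)}}) = \mathcal{O}_{F,S}^\times =: E_S$, giving $\mathbb{Z}[N^S] \cong \bigoplus_{\alpha} \mathbb{Z}[F_{(T)}/E_S]$. Combined with the identity $M \otimes \mathbb{Z}[G/H] \cong \mathrm{Ind}_H^G(M|_H)$, this yields $\widetilde{M}_W \cong \bigoplus_{\alpha} \mathrm{Ind}_{E_S}^{F_{(T)}}(\mathscr{M}_W)$, and another Shapiro reduces matters to $H_i(E_S, \mathscr{M}_W) = 0$.

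For the third step, multiplication by its defining generator identifies $\mathscr{A}_{\bar W \cap S_f}$ with $\mathbb{Z}[\mathsf{I}_{(T)}]$; since $P_S \subset \mathsf{I}_{(T)}$ acts freely by translation, this is a free $\mathbb{Z}[P_S]$-module. The $E_S$-action factors through $E_S \twoheadrightarrow P_S$ with kernel $E = \mathcal{O}_F^\times$, so $\mathbb{Z}[P_S] \cong \mathrm{Ind}_E^{E_S}(\mathbb{Z})$, and applying the tensor-induction identity once more to $\mathscr{M}_W = \mathscr{K}_{\bar W \cap S_\infty} \otimes \mathscr{A}_{\bar W \cap S_f}$ presents $\mathscr{M}_W$ as a direct sum of copies of $\mathrm{Ind}_E^{E_S}(\mathscr{K}_{\bar W \cap S_\infty}|_E)$. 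A final application of Shapiro brings the problem down to $H_i(E, \mathscr{K}_{\bar W \cap S_\infty}) = 0$ for $i > 0$.

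This last vanishing is Lemma~\ref{lem:VanishingHomology} when $\bar W \cap S_\infty \subsetneq S_\infty$. The remaining case $\bar W \cap S_\infty = S_\infty$, which arises precisely when $W \supset V \cap S_\infty$ (in particular for $W = V$), is not formally covered by the stated lemma, but the argument given there still works: the relevant stabilizer becomes $U = E_+$, which is free abelian of rank $n - 1$, the sign twist on each orbit summand $A_j$ becomes trivial on $E_+$, and $H_{i+n-1}(E_+, \mathbb{Z}) = \wedge^{i+n-1} E_+$ vanishes for $i > 0$ by rank. The main technical obstacle is the bookkeeping: one must carefully track the sign twist in the $F_{(T)}$-action on $\mathscr{K}$ and the $i^S$-twist on $\mathbb{Z}[N^S]$ when verifying the permutation-module decompositions and the induction identifications at each of the three reduction steps.
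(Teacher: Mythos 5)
Your argument is correct and follows essentially the same route as the paper's own proof: peel off the tensor factors of $B(W)$ by successive applications of Shapiro's lemma, descending from $F^\times$ through $\mathcal{O}_{F,S}^\times$ to $E=\mathcal{O}_F^\times$, and conclude with the vanishing of $H_i(E,\mathscr{K}_{\bar W\cap S_\infty})$. The only structural difference is cosmetic: where you use three Shapiro steps (passing through $F_{(T)}$ as an intermediate subgroup), the paper collapses the first two into one, splitting $B(W)$ into summands $B'(W)$ indexed by $F_{(T)}$-orbits in $N^S$ and identifying each $B'(W)$ directly as an induction from $\mathcal{O}_{F,S}^\times$ to $F^\times$.

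Your remark about the boundary case $\bar W\cap S_\infty=S_\infty$ flags a genuine gap in the paper's argument, and your fix is right. The paper's proof ends by invoking Lemma~\ref{lem:VanishingHomology} after asserting that $\bar W\cap S_\infty\neq S_\infty$, but since $\bar W=W\cup(S\setminus V)$ one has $\bar V=S$, so $\bar W\cap S_\infty=S_\infty$ for $W=V$ (and more generally whenever $V\cap S_\infty\subset W$), which the lemma as stated does not cover. As you observe, in this case the stabilizer is $U=E_+$, still free abelian of rank $n-1$ because $-1\notin E_+$; the character ${\rm sgn}$ is trivial on $E_+$, so one lands in the case $U'=U$ of the lemma's proof, and $H_{i+n-1}(E_+,\mathbb{Z})\cong\wedge^{i+n-1}E_+=0$ for $i>0$. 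The underlying issue is that the hypothesis $W\subsetneq S_\infty$ in Lemma~\ref{lem:VanishingHomology} (and the justification ``since $W\neq S_\infty$'' for the freeness of $U$) appears to be a slip for $W\neq\emptyset$, which is the only thing the argument actually needs to ensure $-1\notin U$; with that correction the lemma covers every nonempty $W\subset S_\infty$, and since $\bar W\cap S_\infty\supset S_\infty\setminus V\neq\emptyset$ by condition (C1), the proposition then follows for all $W\subset V$ exactly as both you and the paper argue.
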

\begin{proof}
Fix $W\subset V$ and $i>0$. Let $X\subset N^{S}$ be the image of
$F_{(T)}$ under $\iota$. We put
\[
B'(W)=\mathscr{M}_{W}\otimes\mathbb{Z}[X]\otimes\mathbb{Z}[F^{\times}]/\mathscr{N}\subset B(W).
\]
Then $B(W)$ is isomorphic to a direct sum of copies of $B'(W)$.
So it is enough to prove that
\[
H_{i}(F^{\times},B'(W))=0
\]
for the proof of the proposition. Let $\overline{\mathscr{M}}_{W}$
be a $\mathbb{Z}[F^{\times}]$-module and $h:\overline{\mathscr{M}}_{W}\simeq\mathscr{M}_{W}$
a homomorphism of $\mathbb{Z}$-module such that
\[
x\cdot h(a)=h(x^{-1}a)\ \ \ (x,a)\in F^{\times}\times\mathcal{M}_{W}.
\]
Then $B'(W)$ is isomorphic to $\overline{\mathscr{M}}_{W}\otimes_{\mathbb{Z}[\mathcal{O}_{F,S}^{\times}]}\mathbb{Z}[F^{\times}]$.
Therefore we have
\[
H_{i}(F^{\times},B'(W))=H_{i}(\mathcal{O}_{F,S}^{\times},\overline{\mathscr{M}}_{W}).
\]
Note that $H_{i}(\mathcal{O}_{F,S}^{\times},\overline{\mathscr{M}}_{W})=0$
is equivalent to $H_{i}(\mathcal{O}_{F,S}^{\times},\mathscr{M}_{W})=0$.
Since $\mathscr{M}_{W}$ is isomorphic to a direct sum of copies of
$\mathscr{K}_{\bar{W}\cap S_{\infty}}\otimes_{\mathbb{Z}[E]}\mathbb{Z}[\mathcal{O}_{F,S}^{\times}]$,
it is enough to prove that
\[
H_{i}(\mathcal{O}_{F,S}^{\times},\mathscr{K}_{\bar{W}\cap S_{\infty}}\otimes_{\mathbb{Z}[E]}\mathbb{Z}[\mathcal{O}_{F,S}^{\times}])=0.
\]
Since we have
\[
H_{i}(\mathcal{O}_{F,S}^{\times},\mathscr{K}_{\bar{W}\cap S_{\infty}}\otimes_{\mathbb{Z}[E]}\mathbb{Z}[\mathcal{O}_{F,S}^{\times}])=H_{i}(E,\mathscr{K}_{\bar{W}\cap S_{\infty}})
\]
and $\bar{W}\cap S_{\infty}\neq S_{\infty}$, the proposition follows
from Lemma \ref{lem:VanishingHomology}.
\end{proof}
From Proposition \ref{prop:rVanish} and \ref{prop:ExactVer}, we
obtain the following proposition.
\begin{prop}
\label{prop:preSD}The triple $(B,\mathcal{L},\vartheta)$ is a Shintani
datum.
\end{prop}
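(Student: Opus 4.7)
The plan is to verify directly the two axioms in the definition of a Shintani datum for the triple $(B,\mathcal{L},\vartheta)$. The first axiom, that $H_i(F^\times, B(W)) = 0$ for all $i>0$ and $W \subset V$, is precisely the content of Proposition \ref{prop:ExactVer}, which has already been established; nothing further is required for this part. The fact that $\mathcal{L}_{v,e}$ is itself a natural transformation from $B$ to $R$ is automatic, since it is defined as the composition $\psi \circ \xi_{v,e}$ of two natural transformations constructed in the preceding subsections.

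For the second axiom, I need to show that for every $W \subsetneq V$ the image $r_W^V(\bar\vartheta)$ lies in $IB(W)$, where $\bar\vartheta \in B(V)$ is any lift of $\vartheta$. Proposition \ref{prop:rVanish} covers exactly the case $W = V \setminus \{v\}$ for each single $v \in V$. To deduce the general case I would pick any $v \in V \setminus W$ and use the factorization
\[
r_W^V \;=\; r_W^{V \setminus \{v\}} \circ r_{V \setminus \{v\}}^V
\]
coming from functoriality of $B$ on $\mathbf{Sub}(V)^{\mathrm{op}}$. Since $B$ takes values in $\mathbf{Mod}(F^\times)$, the morphism $r_W^{V \setminus \{v\}}$ is a $\mathbb{Z}[F^\times]$-homomorphism, and therefore sends the augmentation submodule $IB(V \setminus \{v\})$ into $IB(W)$. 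Combining this with Proposition \ref{prop:rVanish} yields $r_W^V(\bar\vartheta) \in IB(W)$, as required.

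There is no real remaining obstacle: the substantive work has already been carried out in Proposition \ref{prop:rVanish} (a case analysis separating finite places, infinite places admitting a unit of mixed sign, and the remaining infinite places) and in Proposition \ref{prop:ExactVer} (a Shapiro-type reduction to the vanishing statement $H_i(E, \mathscr{K}_W) = 0$ of Lemma \ref{lem:VanishingHomology}). The proof of Proposition \ref{prop:preSD} itself is therefore just a short bookkeeping assembly of these two results together with the observation above on functoriality of restriction.
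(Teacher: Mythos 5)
Your proposal is correct and follows essentially the same route as the paper, whose proof of this proposition is a one-line citation of Propositions \ref{prop:rVanish} and \ref{prop:ExactVer}. The only thing you add is to make explicit the functoriality step reducing the second axiom for a general $W \subsetneq V$ to the codimension-one case $W = V \setminus \{v\}$ handled by Proposition \ref{prop:rVanish}, via the factorization $r_W^V = r_W^{V\setminus\{v\}} \circ r_{V\setminus\{v\}}^V$ and the fact that a $\mathbb{Z}[F^\times]$-homomorphism carries $I(-)$ into $I(-)$; this is a correct and useful clarification that the paper leaves implicit.
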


\subsection{Proof that $(B,\mathcal{L},\vartheta)$ is an $(S,T,V,J)$-Shintani
datum.}

Let $E\in\Upsilon(J)$. First, let us recall the definition of the
Stickelberger function $\Theta_{E,S,T}(s)$.
\begin{defn}
The Stickelberger function $\Theta_{E,S,T}(s)\in\mathbb{C}[{\rm Gal}(E/F)]$
is defined by
\[
\Theta_{E,S,T}(s)=\sum_{\chi\in{\rm Hom}({\rm Gal}(E/F),\mathbb{C}^{\times})}L_{S,T}(\chi,s)e_{\bar{\chi}}
\]
where $L_{S,T}(\chi,s)$ is an analytic continuation of
\[
\prod_{\mathfrak{p}\notin S}(1-\chi(\sigma_{\mathfrak{p}})N(\mathfrak{p})^{-s})\prod_{\mathfrak{p}\in T}(1-\chi(\sigma_{\mathfrak{p}})N(\mathfrak{p})^{1-s})^{-1}\ \ \ (\Re(s)>1)
\]
and
\[
e_{\bar{\chi}}=\frac{1}{\#{\rm Gal}(E/F)}\sum_{\sigma\in{\rm Gal}(E/F)}\chi(\sigma)[\sigma]\in\mathbb{C}[G].
\]

\end{defn}
For integer ideal $\mathfrak{a}$, we put
\[
c_{T}(\mathfrak{a})=\prod_{\substack{\mathfrak{p}\in T\\
\mathfrak{p}\subset\mathfrak{a}
}
}(1-N(\mathfrak{p})).
\]
Then the Stickelberger element can be written as
\[
\Theta_{E,S,T}(s)=\sum_{\substack{\mathfrak{a}\in\mathsf{I}_{(S)}\\
\mathfrak{a}\subset\mathcal{O}_{F}
}
}c_{T}(\mathfrak{a})N(\mathfrak{a})^{-s}[{\rm rec}(\iota(\mathfrak{a}))^{-1}].
\]
For $C\in C_{F}^{+}$, we put
\[
\Theta_{E,S,T}(s,C)=\sum_{\substack{\mathfrak{a}\in\mathsf{I}_{(S)}\cap C\\
\mathfrak{a}\subset\mathcal{O}_{F}
}
}c_{T}(\mathfrak{a})N(\mathfrak{a})^{-s}[{\rm rec}(\iota(\mathfrak{a}))^{-1}].
\]

\begin{lem}
\label{lem:ThetaC}We have 
\[
\Theta_{E,S,T}(0,C)={\rm rec}(\mathcal{L}_{v,e}(\theta_{C}))\in\mathbb{Z}[{\rm Gal}(E/F)]
\]
 where $\theta_{C}\in B(V)$ is a lift of $\vartheta_{C}$. \end{lem}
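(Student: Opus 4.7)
The plan is to unfold $\mathcal{L}_{v,e}(\theta_C) = \psi(\xi_{v,e}(\theta_C))$ via the definitions and match the resulting Shintani-type expansion with the classical Stickelberger expression for $\Theta_{E,S,T}(0,C)$. A direct computation gives
$$\mathcal{L}_{v,e}(\theta_C) = \zeta_f \cdot [\iota(\mathfrak{a})] \in \mathbb{Z}[N_F],$$
where $f = \Xi_{v,e}(D) \cdot \bm{1}_{\mathfrak{a}^{-1}\delta_{S,T}}$ and $\zeta_f = \sum_{y \in N_S}\zeta_{f,y}(\bm{0})[y^{-1}]$. A M\"obius-type expansion of $\delta_{S,T}$ produces the pointwise identity
$$\bm{1}_{\mathfrak{a}^{-1}\delta_{S,T}}(x) = \mathbb{1}[x\mathfrak{a} \subset \mathcal{O}_F,\ (x\mathfrak{a}, S_f) = 1]\cdot c_T(x\mathfrak{a}),$$
which supplies exactly the coprimality condition to $S$ and the $T$-Euler factors $c_T$ appearing in the Stickelberger expansion.

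Next, I would apply the global reciprocity law. Formally unfolding $\zeta_f(\bm{0}) = \sum_{x \in F^{\times}}f(x)[i_S(x)^{-1}]$ (interpreted through the analytic continuation furnished by the preceding proposition) and using $\mathrm{rec}(i_S(x)) \cdot \mathrm{rec}(i^S(x)) = 1$ for $x \in F^{\times}$ together with $i^S(x) = \iota((x))$, each group-algebra phase $[i_S(x)^{-1}\iota(\mathfrak{a})]$ collapses under $\mathrm{rec}$ to $[\mathrm{rec}(\iota(\mathfrak{c}))^{-1}]$ where $\mathfrak{c} = x\mathfrak{a}$. Now partition the sum by $\mathfrak{c}$: the $x$'s with $(x)\mathfrak{a} = \mathfrak{c}$ form a single $\mathcal{O}_F^{\times}$-orbit, and since $\Xi_{v,e}(D)$ is supported on totally positive arguments (as $D \in \mathcal{C}(F_{g_0})$), only the $E_+$-suborbit of a totally positive representative $x_\mathfrak{c}$ contributes, which forces $\mathfrak{c}$ to lie in the narrow class $C$. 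By Theorem \ref{thm:SignedFD}, the unit sum $\sum_{\epsilon \in E_+}\Xi_{v,e}(D)(x_\mathfrak{c}\epsilon)$ then collapses to $1$, so each admissible $\mathfrak{c}$ contributes exactly $c_T(\mathfrak{c})[\mathrm{rec}(\iota(\mathfrak{c}))^{-1}]$; summing over integer $\mathfrak{c} \in C \cap \mathsf{I}_{(S)}$ recovers $\Theta_{E,S,T}(0, C)$.

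The main obstacle will be that the Dirichlet sum $\sum_x f(x)[i_S(x)^{-1}]$ does not literally converge at $\bm{s} = \bm{0}$, so its value must be interpreted through the Shintani decomposition of $\zeta_{f,y}$ into cone zeta functions and their meromorphic continuation\,---\,this is exactly the content of the preceding proposition on holomorphy at $\bm{s} = \bm{0}$. A secondary bookkeeping concern is verifying that $\mathcal{O}_F^{\times}$ acts trivially under $\mathrm{rec}$ (so that the $[i_S(\epsilon)^{-1}]$ factors in the unit sum drop out, leaving a single phase depending only on the ideal class of $\mathfrak{c}$) and that the decomposition of an $\mathcal{O}_F^{\times}$-orbit into $E_+$-suborbits is compatible with the totally-positive support of $\Xi_{v,e}(D)$, so that each narrow class $C$ is counted exactly once.
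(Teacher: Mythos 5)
Your proposal is correct and follows essentially the same route as the paper's proof: both unfold $\mathcal{L}_{v,e}(\theta_C)=\zeta_f[\iota(\mathfrak{a})]$, use the indicator identity for $\bm{1}_{\mathfrak{a}^{-1}\delta_{S,T}}$ to produce the integrality/coprimality-to-$S$ constraints and the $c_T$ factors, apply global reciprocity in the form $\mathrm{rec}(i_S(x))\,\mathrm{rec}(i^S(x))=1$, and invoke Theorem \ref{thm:SignedFD} to convert the $\Xi_{v,e}(D)$-weighted sum over $F^\times$ into a quotient-by-$E_+$ sum over totally positive $x$ (equivalently, over integral ideals in the narrow class $C$). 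The only difference is cosmetic: you unfold $\zeta_f$ and reassemble $\Theta_{E,S,T}(0,C)$, whereas the paper starts from $\Theta_{E,S,T}(s,C)$, rewrites it via $\Xi_{v,e}(D)$ to recognize $\mathrm{rec}(\zeta_f(s,\dots,s))$, and evaluates at $s=0$.
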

\begin{proof}
Fix $\mathfrak{a}\in C\cap\mathsf{I}_{(S,T)}$. Let $D$ be any element
of $\mathcal{C}(F_{g_{0}})\cap Y(\eta)$. We put
\[
\theta_{C}=\left(D\llbracket g_{0}\rrbracket\otimes\mathfrak{a}^{-1}\delta_{S,T}\right)\otimes\iota(\mathfrak{a})\otimes1\in B(V).
\]
Then $\theta_{C}$ is a lift of $\vartheta_{C}$. We denote by $F_{+}$
the set of totally positive elements. Since any ideal $\mathfrak{b}$
in $C$ can be written as $\mathfrak{b}=x\mathfrak{a}$ where $x\in F_{+}$,
we have 
\begin{align}
\Theta_{E,S,T}(s,C) & =\sum_{x\in(\mathfrak{a}^{-1}\cap F_{(S)}\cap F_{+})/E_{+}}c_{T}(x\mathfrak{a})N(x\mathfrak{a})^{-s}[{\rm rec}(\iota(x\mathfrak{a}))^{-1}]\nonumber \\
 & =N(\mathfrak{a})^{-s}[{\rm rec}(\iota(\mathfrak{a})]\sum_{x\in(\mathfrak{a}^{-1}\cap F_{(S)}\cap F_{+})/E_{+}}c_{T}(x\mathfrak{a})N((x))^{-s}[{\rm rec}(i^{S}(x))].\label{eq:e0}
\end{align}
From the definition, we have
\begin{equation}
\bm{1}_{\mathfrak{a}^{-1}\delta_{S,T}}(x)=\begin{cases}
c_{T}(x\mathfrak{a}) & x\in\mathfrak{a}^{-1}\cap F_{(S)}\\
0 & x\in F\setminus\left(\mathfrak{a}^{-1}\cap F_{(S)}\right).
\end{cases}\label{eq:e1}
\end{equation}
Since ${\rm rec}:\mathbb{A}_{F}^{\times}\to\mathbb{Z}[{\rm Gal}(E/F)]$
vanishes on $F^{\times}\subset\mathbb{A}_{F}^{\times}$, we have 
\begin{equation}
{\rm rec}(i^{S}(x))={\rm rec}(i_{S}(x))^{-1}\ \ \ (x\in F^{\times}).\label{eq:e2}
\end{equation}
From (\ref{eq:e1}), (\ref{eq:e2}) and Theorem \ref{thm:SignedFD},
we have 
\begin{align}
\sum_{x\in(\mathfrak{a}^{-1}\cap F_{(S)}\cap F_{+})/E_{+}}c_{T}(x\mathfrak{a})N((x))^{-s}[{\rm rec}(i^{S}(x))] & =\sum_{x\in\mathfrak{a}^{-1}\cap F_{(S)}}\Xi_{v,e}(D)c_{T}(x\mathfrak{a})N((x))^{-s}[{\rm rec}(i_{S}(x))^{-1}]\nonumber \\
 & =\sum_{x\in F^{\times}}\Xi_{v,e}(D)(x)\cdot\bm{1}_{\mathfrak{a}^{-1}\delta_{S,T}}(x)\cdot N((x))^{-s}[{\rm rec}(i_{S}(x))^{-1}]\nonumber \\
 & =\sum_{x\in F^{\times}}\phi_{V,v,e}(D\llbracket g_{0}\rrbracket\otimes\mathfrak{a}^{-1}\delta_{S,T})(x)\cdot N((x))^{-s}[{\rm rec}(i_{S}(x))^{-1}].\label{eq:e3}
\end{align}
Put
\[
f=\phi_{V,v,e}\left(D\llbracket g_{0}\rrbracket\otimes\mathfrak{a}^{-1}\delta_{S,T}\right).
\]
From the definition, we have
\[
\zeta_{f}((s_{v})_{v})=\sum_{x\in F^{\times}}f(x)\prod_{v\in S_{\infty}}\left|x\right|_{v}^{-s_{v}}[i_{S}(x)^{-1}].
\]
Thus
\begin{equation}
{\rm rec}(\zeta_{f}((s,\dots,s)))=\sum_{x\in F^{\times}}f(x)N((x))^{-s}[{\rm rec}(i_{S}(x))^{-1}].\label{eq:e4}
\end{equation}
From (\ref{eq:e0}), (\ref{eq:e3}) and (\ref{eq:e4}), we have 
\[
\Theta_{E,S,T}(s,C)=N(\mathfrak{a})^{-s}[{\rm rec}(\iota(\mathfrak{a})]\cdot{\rm rec}(\zeta_{f}(s,\dots,s)).
\]
Thus
\begin{equation}
\Theta_{E,S,T}(0,C)={\rm rec}(\zeta_{f}[\iota(\mathfrak{a})]).\label{eq:e5}
\end{equation}
From the definition, we have
\begin{align}
\mathcal{L}_{v,e}(\theta_{C}) & =\psi(\xi_{v,e}(\theta_{C}))\nonumber \\
 & =\psi(f\otimes\iota(\mathfrak{a})\otimes1)\nonumber \\
 & =\zeta_{f}[\iota(\mathfrak{a})].\label{eq:e6}
\end{align}
From (\ref{eq:e5}) and (\ref{eq:e6}), we have 
\[
\Theta_{E,S,T}(0,C)={\rm rec}(\mathcal{L}_{v,e}(\theta_{C})).
\]
\end{proof}
\begin{lem}
\label{lem:Theta}We have 
\[
\Theta_{E,S,T}={\rm rec}(\mathcal{L}_{v,e}(\theta))\in\mathbb{Z}[{\rm Gal}(E/F)]
\]
 where $\theta\in B(V)$ is a lift of $\vartheta$\textup{.}\end{lem}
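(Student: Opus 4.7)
The plan is to reduce to the already-proved class-by-class statement Lemma \ref{lem:ThetaC} by exploiting the decomposition of both sides over narrow ideal classes. On the analytic side, from the formula
\[
\Theta_{E,S,T}(s) = \sum_{\substack{\mathfrak{a} \in \mathsf{I}_{(S)} \\ \mathfrak{a} \subset \mathcal{O}_F}} c_T(\mathfrak{a}) N(\mathfrak{a})^{-s}[{\rm rec}(\iota(\mathfrak{a}))^{-1}]
\]
given in the text, the sum splits as $\Theta_{E,S,T}(s) = \sum_{C \in C_F^+} \Theta_{E,S,T}(s,C)$ since every integral ideal coprime to $S$ lies in a unique narrow class. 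Specializing at $s = 0$ yields $\Theta_{E,S,T} = \sum_{C \in C_F^+} \Theta_{E,S,T}(0,C)$. On the Shintani-datum side, the very definition of $\vartheta$ gives $\vartheta = \sum_{C \in C_F^+} \vartheta_C$ in $B(V)/IB(V)$, so the two decompositions match up class by class.

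Next, I would pick a lift $\theta_C \in B(V)$ of each $\vartheta_C$ (as in Lemma \ref{lem:ThetaC}) and set $\theta^\ast := \sum_{C \in C_F^+} \theta_C$, which is a well-defined element of $B(V)$ since $C_F^+$ is finite, and which is a lift of $\vartheta$. By $\mathbb{Z}$-linearity of $\mathcal{L}_{v,e}$ and ${\rm rec}$, together with Lemma \ref{lem:ThetaC} applied class by class,
\[
{\rm rec}(\mathcal{L}_{v,e}(\theta^\ast)) = \sum_{C \in C_F^+} {\rm rec}(\mathcal{L}_{v,e}(\theta_C)) = \sum_{C \in C_F^+} \Theta_{E,S,T}(0,C) = \Theta_{E,S,T}.
\]

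To finish for an arbitrary lift $\theta$ of $\vartheta$, I would observe that $\theta - \theta^\ast \in IB(V) = I_{F^\times} B(V)$. Since $\mathcal{L}_{v,e}$ is a natural transformation in $\mathbf{Mod}(F^\times)$, it is $\mathbb{Z}[F^\times]$-equivariant, so $\mathcal{L}_{v,e}(\theta - \theta^\ast) \in I_{F^\times} R(V)$. Because $E \in \Upsilon(J)$, global class field theory makes the diagonal image of $F^\times$ in $N_F$ act trivially on ${\rm Gal}(E/F)$, so the map ${\rm rec} : \mathbb{Z}[N_F] \to \mathbb{Z}[{\rm Gal}(E/F)]$ annihilates $I_{F^\times}\mathbb{Z}[N_F]$ and hence kills $I_{F^\times} R(V)$. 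Therefore ${\rm rec}(\mathcal{L}_{v,e}(\theta)) = {\rm rec}(\mathcal{L}_{v,e}(\theta^\ast)) = \Theta_{E,S,T}$. I do not anticipate any serious obstacle: Lemma \ref{lem:ThetaC} has carried out the analytic work, and the remaining argument is pure bookkeeping combining a finite class-number sum with the standard fact that reciprocity is trivial on principal ideles.
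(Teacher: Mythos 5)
Your proposal is correct and takes essentially the same approach as the paper, whose proof is just the one-line remark that the claim follows from Lemma \ref{lem:ThetaC}. You have correctly supplied the implicit bookkeeping: the decomposition $\Theta_{E,S,T}=\sum_{C\in C_F^+}\Theta_{E,S,T}(0,C)$, the matching decomposition $\vartheta=\sum_C\vartheta_C$, and the observation that ${\rm rec}\circ\mathcal{L}_{v,e}$ annihilates $IB(V)$ because $\mathcal{L}_{v,e}$ is $\mathbb{Z}[F^\times]$-equivariant and ${\rm rec}$ is trivial on the diagonal image of $F^\times$ in $N_F$.
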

\begin{proof}
The claim follows from Lemma \ref{lem:ThetaC}.
\end{proof}
From Proposition \ref{prop:preSD} and Lemma \ref{lem:Theta} we obtain
the following theorem.
\begin{thm}
The triple $(B,\mathcal{L},\vartheta)$ is an $(S,T,V,J)$-Shintani
datum.
\end{thm}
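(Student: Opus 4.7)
The plan is to assemble the theorem directly from the two results that have already been proved. By definition, an $(S,T,V,J)$-Shintani datum is a Shintani datum $(\mathcal{F},\varphi,x)$ subject to the additional compatibility ${\rm rec}(\varphi(\bar{x})) = \Theta_{K,S,T}$ for every $K \in \Upsilon(J)$. So there are two obligations to discharge, one structural and one numerical, and both are already in hand.

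First, Proposition \ref{prop:preSD} verifies that $(B,\mathcal{L},\vartheta)$ is a Shintani datum: the homological vanishing $H_i(F^{\times},B(W))=0$ for $i>0$ comes from Proposition \ref{prop:ExactVer} (via Shapiro's lemma and Lemma \ref{lem:VanishingHomology}), and the restriction condition ${\rm r}_W^V(\bar\vartheta) \in IB(W)$ for every proper $W \subsetneq V$ comes from Proposition \ref{prop:rVanish}, whose case analysis on whether $v$ is finite, or infinite with a suitable sign-reversing unit, or infinite without one, already handles every $v \in V$.

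Second, Lemma \ref{lem:Theta} supplies exactly the remaining Stickelberger identity ${\rm rec}(\mathcal{L}_{v,e}(\theta)) = \Theta_{E,S,T}$ for any lift $\theta \in B(V)$ of $\vartheta$ and any $E \in \Upsilon(J)$. Combining Proposition \ref{prop:preSD} with Lemma \ref{lem:Theta} therefore produces the theorem; so the proof is essentially one line, an appeal to these two results.

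Because the substantive content has been proved earlier, there is no real obstacle here. If one wishes to pinpoint the conceptual core that is being invoked, it is Lemma \ref{lem:ThetaC}, which decomposes by narrow ideal class $C \in C_F^+$ and uses Theorem \ref{thm:SignedFD} (Charollois--Dasgupta--Greenberg's signed fundamental domain for $E_+$) to convert the $L$-series sum $\sum c_T(x\mathfrak{a})N((x))^{-s}$ over $(\mathfrak{a}^{-1} \cap F_{(S)} \cap F_+)/E_+$ into the Shintani-cocycle-weighted sum $\sum \Xi_{v,e}(D)(x) \cdot \bm{1}_{\mathfrak{a}^{-1}\delta_{S,T}}(x) \cdot N((x))^{-s}$ over $F^\times$. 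That translation — together with the global reciprocity identity ${\rm rec}(i^S(x)) = {\rm rec}(i_S(x))^{-1}$ for $x \in F^\times$ — is what makes the whole construction line up with the Stickelberger function, and the theorem is simply the statement that it does.
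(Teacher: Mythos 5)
Your proof is correct and takes the same route as the paper: the theorem is precisely the conjunction of Proposition \ref{prop:preSD} (that $(B,\mathcal{L},\vartheta)$ is a Shintani datum) and Lemma \ref{lem:Theta} (that ${\rm rec}(\mathcal{L}_{v,e}(\theta))=\Theta_{E,S,T}$ for all $E\in\Upsilon(J)$), and that is exactly how the paper presents it. Your unpacking of the supporting results — Proposition \ref{prop:rVanish}, Proposition \ref{prop:ExactVer}, Lemma \ref{lem:VanishingHomology}, Lemma \ref{lem:ThetaC}, and Theorem \ref{thm:SignedFD} — is accurate background, but the argument itself is the same one-line appeal.
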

From this theorem and Theorem \ref{thm:GrossFromShintani}, we obtain
the following corollary, which was already proved in \cite{DasguptaSpiess}.
\begin{cor}
\label{cor:GrossFromC1}If $(S,T,V)$-satisfies (C1) then \textup{
\[
\Theta_{E,S,T}\in\prod_{v\in V}I_{G_{v},G}
\]
for all finite abelian extensions $E$ of $F$ unramified outside
$S$.} 
\end{cor}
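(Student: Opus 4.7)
The plan is to deduce this corollary immediately from the previous theorem (that $(B,\mathcal{L},\vartheta)$ is an $(S,T,V,J)$-Shintani datum) combined with Theorem \ref{thm:GrossFromShintani}, after first arranging that the given extension $E$ lies in $\Upsilon(J)$ for an appropriate choice of $J$.

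First, given an arbitrary finite abelian extension $E/F$ unramified outside $S$, I would observe that the reciprocity map $\prod_{v\in S} F_v^\times \xrightarrow{\mathrm{rec}} \mathrm{Gal}(E/F)$ is continuous with finite discrete target, hence its kernel is open in $\prod_{v\in S} F_v^\times$. Since every open subgroup of this product contains a basic open neighborhood of the identity of the form $\prod_{v\in S} J_v$ with each $J_v$ open in $F_v^\times$ (shrinking the $J_v$ at archimedean places to $\mathbb{R}_{>0}^\times$ or similar, and at finite places to a sufficiently small $1 + \mathfrak{m}_v^n$), I can choose such a $J = \prod_{v \in S} J_v$ with $J \subset \ker(\mathrm{rec})$. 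By definition, this means $E \in \Upsilon(J)$.

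Next, by the theorem just proved, $(B,\mathcal{L},\vartheta)$ is an $(S,T,V,J)$-Shintani datum for this choice of $J$. In particular, an $(S,T,V,J)$-Shintani datum exists. Applying Theorem \ref{thm:GrossFromShintani} then yields
\[
\Theta_{E,S,T}\in\prod_{v\in V}I_{G_{v},G}
\]
for our chosen $E \in \Upsilon(J)$, where $G = \mathrm{Gal}(E/F)$.

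There is essentially no obstacle here, as the corollary is a direct packaging of two earlier results; the only bookkeeping step is the choice of $J$, which is routine since the statement ranges over arbitrary finite abelian extensions unramified outside $S$, while the Shintani datum machinery has been set up for a fixed $J$. The construction of $(B,\mathcal{L},\vartheta)$ in this section is manifestly uniform in $J$ (the functor $B$ and transformation $\mathcal{L}$ do not depend on $J$, and $\vartheta$ is defined independently of $J$), so the same datum works for every $J$, which is what makes the reduction to a single universal construction legitimate.
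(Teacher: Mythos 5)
Your proof is correct and follows the same approach as the paper, which simply remarks that the corollary follows by combining the theorem that $(B,\mathcal{L},\vartheta)$ is an $(S,T,V,J)$-Shintani datum with Theorem \ref{thm:GrossFromShintani}; you supply the implicit bookkeeping step of choosing $J=\prod_{v\in S}J_v$ small enough that $E\in\Upsilon(J)$, which is exactly the point the paper leaves tacit. One small imprecision worth noting: the natural transformation $\mathcal{L}_{v,e}=\psi\circ\xi_{v,e}$ does have a $J$-dependent target, since $R(W)=\mathbb{Z}[N_F^{(J)}/\prod_{v\in V\setminus W}N_v^{(J)}]$ depends on $J$; what is uniform in $J$ is the \emph{construction}, so the paper really produces a family of Shintani data indexed by $J$, which is all your argument needs.
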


\section{\label{sec:ZGmodules}A certain submodule of $(\bigotimes_{\mathbb{Z}}^{r}\mathcal{O}_{H,S,T}^{\times})\otimes_{\mathbb{Z}}\mathbb{Z}[{\rm Gal}(H/F)].$}

Let $G$ be a finite abelian group. For a $\mathbb{Z}[G]$-module
$N$, we write $\mathbb{Q}N$ for $\mathbb{Q}\otimes_{\mathbb{Z}}N$
and $N[G]$ for $N\otimes_{\mathbb{Z}}\mathbb{Z}[G]$.
\begin{defn}
A $\mathbb{Z}[G]$-lattice is a finitely generated $\mathbb{Z}[G]$-module
which is free as $\mathbb{Z}$-module.
\end{defn}
Let $M$ be a $\mathbb{Z}[G]$-lattice. Define the action of $G$
to $(\otimes_{\mathbb{Z}}^{r}M)[G]$ by
\[
\sigma(m_{1}\otimes\cdots\otimes m_{r}\otimes[\tau])=m_{1}\otimes\cdots\otimes m_{r}\otimes[\sigma\tau]
\]
where $m_{1},\dots,m_{r}\in M$ and $\tau,\sigma\in G$. For $\sigma\in G$,
define $c_{\sigma}\in{\rm End}_{\mathbb{Z}}((\otimes_{\mathbb{Z}}^{r}M)[G])$
by
\[
c_{\sigma}(m_{1}\otimes\cdots\otimes m_{r}\otimes[\tau])=m_{1}^{\sigma}\otimes m_{2}\otimes\cdots\otimes m_{r}\otimes[\tau].
\]
Let 
\[
\mathfrak{S}_{r}=\{f:\{1,\dots n\}\to\{1,\dots,n\}\mid\text{f is a bijection}\}
\]
be the symmetric group of degree $r$. There is a natural action of
$\mathfrak{S}_{r}$ to $(\otimes_{\mathbb{Z}}^{r}M)[G]$. We say that
$m\in(\otimes_{\mathbb{Z}}^{r}M)[G]$ is antisymmetric if $fm={\rm sgn}(f)m$
for all $f\in\mathfrak{S}_{r}$.
\begin{defn}
$(M,G)_{\star}^{r}$ is the submodule of $(\otimes_{\mathbb{Z}}^{r}M)[G]$
defined by
\[
(M,G)_{\star}^{r}=\{m\in(\otimes_{\mathbb{Z}}^{r}M)[G]:m\text{ is antisymmetric and }c_{\sigma}(m)=\sigma m\text{ for all }\sigma\in G\}.
\]

\end{defn}
For $\varphi_{1},\dots,\varphi_{r}\in{\rm Hom}_{\mathbb{Z}[G]}(M,\mathbb{Z}[G])$,
we define $\varphi_{1}\wedge\cdots\wedge\varphi_{r}\in{\rm Hom}_{\mathbb{Z}[G]}(\wedge_{\mathbb{Z}[G]}^{r}M,\mathbb{Z}[G])$
by
\[
(\varphi_{1}\wedge\cdots\wedge\varphi_{r})(m_{1}\wedge\cdots\wedge m_{r})=\det(\varphi_{i}(m_{j}))\ \ \ \ (m_{1},\dots,m_{r}\in M).
\]
 In \cite{MR1385509}, Rubin defined $\wedge_{0}^{r}M\subset\mathbb{Q}\wedge_{\mathbb{Z}[G]}^{r}M$
by
\[
\wedge_{0}^{r}M=\{m\in\mathbb{Q}\wedge_{\mathbb{Z}[G]}^{r}M:(\varphi_{1}\wedge\cdots\wedge\varphi_{r})(m)\in\mathbb{Z}[G]\ \text{for all }\varphi_{1},\dots,\varphi_{r}\in{\rm Hom}_{\mathbb{Z}[G]}(M,\mathbb{Z}[G])\}.
\]
Define $P\in{\rm Hom}_{\mathbb{Z}[G]}\left(\mathbb{Q}\wedge_{\mathbb{Z}[G]}^{r}M,\mathbb{Q}(\bigotimes_{\mathbb{Z}}^{r}M)[G]\right)$
by 
\[
P(m_{1}\wedge\cdots\wedge m_{r})=\sum_{f\in\mathfrak{S}_{r}}{\rm sgn}(f)\bigotimes_{j=1}^{r}(\sum_{\sigma\in G}m_{f(j)}^{\sigma^{-1}}[\sigma]).
\]

\begin{prop}
\label{prop:wedgeM_iso_starM-1}$\wedge_{0}^{r}M$ and $(M,G)_{\star}^{r}$
are isomorphic by $P$.\end{prop}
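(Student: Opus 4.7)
The plan is three-fold: first verify $P$ is a well-defined $\mathbb{Z}[G]$-linear map whose image lies in $\mathbb{Q}(M,G)_\star^r$; second show $P$ is bijective after $\otimes_{\mathbb{Z}} \mathbb{Q}$ via character decomposition; and third match the two integral structures by means of a pairing identity.

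For the first phase, I would check that $P$ respects the $\mathbb{Z}[G]$-balance relation $\sigma m_i \wedge m_j \wedge \cdots = m_i \wedge \sigma m_j \wedge \cdots$: moving $\sigma$ across wedge factors is absorbed by reindexing $\tau \mapsto \sigma\tau$ in the inner sums $\sum_\tau m^{\tau^{-1}}[\tau]$. The image automatically lies in the antisymmetric part (from the $\mathrm{sgn}(f)$ factor), and the equivariance $c_\sigma P(m) = \sigma P(m)$ follows from a similar short reindexing that shifts a Galois action on the first tensor slot onto the $[G]$-coefficient. For the second phase, I would extend scalars to $\overline{\mathbb{Q}}$ and split $\overline{\mathbb{Q}}[G] \cong \prod_\chi \overline{\mathbb{Q}}$; then $M_{\overline{\mathbb{Q}}} = \bigoplus_\chi M_\chi$ and $\wedge^r_{\overline{\mathbb{Q}}[G]} M_{\overline{\mathbb{Q}}} = \bigoplus_\chi \wedge^r_{\overline{\mathbb{Q}}} M_\chi$. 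On the target, antisymmetry combined with $c_\sigma = \sigma$ for every $\sigma$ forces a single character $\chi$ to act on every tensor slot and on $[G]$ simultaneously, so the $\chi$-component of $(M,G)_\star^r \otimes \overline{\mathbb{Q}}$ becomes the antisymmetric subspace of $M_\chi^{\otimes r}$; on each such component, $P$ reduces to the classical antisymmetrization isomorphism from $\wedge^r M_\chi$ onto that subspace, which is an isomorphism in characteristic zero.

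For the third phase, the heart of the argument is the pairing identity
\[
\Psi_{\varphi_1, \ldots, \varphi_r}\bigl(P(m_1 \wedge \cdots \wedge m_r)\bigr) = (\varphi_1 \wedge \cdots \wedge \varphi_r)(m_1 \wedge \cdots \wedge m_r),
\]
where, for $\varphi_j \in \mathrm{Hom}_{\mathbb{Z}[G]}(M, \mathbb{Z}[G])$, one defines a $\mathbb{Z}[G]$-linear map $\Psi_{\varphi_\bullet}: (\otimes_\mathbb{Z}^r M)[G] \to \mathbb{Z}[G]$ by $m_1 \otimes \cdots \otimes m_r \otimes [\sigma] \mapsto \bigl(\prod_j \psi_j(m_j)\bigr) [\sigma]$, with $\psi_j$ the coefficient of $[1]$ in $\varphi_j(\cdot)$. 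Granted this identity, one direction is immediate: if $P(m) \in (M,G)_\star^r$ then every wedge-evaluation $(\varphi_1 \wedge \cdots \wedge \varphi_r)(m)$ lies in $\mathbb{Z}[G]$, so $m \in \wedge_0^r M$. For the reverse, choose a $\mathbb{Z}$-basis $(e_i)$ of $M$ with $\mathbb{Z}$-dual $(e_i^*)$, and define $\varphi_i(x) := \sum_\tau e_i^*(\tau^{-1} x)[\tau]$; then $\Psi_{\varphi_{i_1},\ldots,\varphi_{i_r}}$ is precisely the coordinate functional extracting the $(e_{i_1} \otimes \cdots \otimes e_{i_r})$-coefficient of $P(m)$ as an element of $\mathbb{Z}[G]$, so integrality of all wedge-evaluations forces every coordinate of $P(m)$ to lie in $\mathbb{Z}[G]$, and hence $P(m) \in (\otimes_\mathbb{Z}^r M)[G]$. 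The main obstacle is verifying the pairing identity itself: it unfolds into a triple sum over $\mathfrak{S}_r$, over $(\sigma_1, \ldots, \sigma_r) \in G^r$, and over the coefficient expansions of each $\varphi_j$, and one must recognize the telescoping $\sum_\sigma \psi_j(\sigma^{-1} x)[\sigma] = \varphi_j(x)$ to collapse the internal sums into the determinant $\det(\varphi_i(m_j))$.
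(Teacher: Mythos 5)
Your argument is correct, and the central lemma is the same as the paper's: what you call the pairing identity $\Psi_{\varphi_{\bullet}}(P(m_1\wedge\cdots\wedge m_r))=(\varphi_1\wedge\cdots\wedge\varphi_r)(m_1\wedge\cdots\wedge m_r)$ is precisely the paper's relation $(\hat{l}_1\wedge\cdots\wedge\hat{l}_r)=(l_1\otimes\cdots\otimes l_r\otimes\mathrm{id})\circ P$, with your $\psi_j$ playing the role of $l_j$. Both of you then deduce the equivalence $m\in\wedge_0^r M \Leftrightarrow P(m)\in(\bigotimes_{\mathbb{Z}}^r M)[G]$ from it, and both conclude by intersecting this integral condition with the rational isomorphism $\mathbb{Q}\wedge^r_{\mathbb{Z}[G]}M\cong\mathbb{Q}(M,G)_\star^r$. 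Where you genuinely depart from the paper is in establishing that rational isomorphism: the paper writes down an explicit two-sided inverse $\bar{Q}(m_1\otimes\cdots\otimes m_r[\sigma])=\frac{1}{(\#G)^r r!}m_1\wedge\cdots\wedge m_r^\sigma$ and checks $Q\circ P=\mathrm{id}$ directly and $P\circ Q=\mathrm{id}$ via the projector $R=P\circ\bar{Q}$ which is the identity on $\mathbb{Q}(M,G)_\star^r$, whereas you extend scalars to $\overline{\mathbb{Q}}$, decompose $\overline{\mathbb{Q}}[G]$ into characters, observe that the antisymmetry and $c_\sigma=\sigma$ conditions force all tensor slots into the single eigenspace $M_\chi$ matching the $e_\chi$-component, and identify $P$ on each $\chi$-piece with (a nonzero multiple of) the classical antisymmetrization map $\wedge^r M_\chi\to M_\chi^{\otimes r}$, an isomorphism onto the antisymmetric tensors in characteristic zero. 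Your route is arguably more conceptual and makes transparent why the target is exactly $(M,G)_\star^r$; the paper's route is constructive and hands you the inverse map explicitly. Both are sound, and neither step you sketch has a gap.
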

\begin{proof}
For $l\in{\rm Hom}_{\mathbb{Z}}(M,\mathbb{Z})$, we define $\hat{l}\in{\rm Hom}_{\mathbb{Z}[G]}(M,\mathbb{Z}[G])$
by
\[
\hat{l}(m)=\sum_{\sigma\in G}l(m^{\sigma^{-1}})[\sigma].
\]
Note that we have
\[
{\rm Hom}_{\mathbb{Z}[G]}(M,\mathbb{Z}[G])=\{\hat{l}\mid l\in{\rm Hom}_{\mathbb{Z}}(M,\mathbb{Z})\}.
\]
For $l_{1},\dots,l_{r}\in{\rm Hom}_{\mathbb{Z}}(M,\mathbb{Z})$, we
have
\[
(\hat{l}_{1}\wedge\cdots\wedge\hat{l}_{r})=(l_{1}\otimes\cdots\otimes l_{r}\otimes{\rm id})\circ P.
\]
Therefore, for $m\in\mathbb{Q}\wedge_{\mathbb{Z}[G]}^{r}M$, we have
\[
m\in\wedge_{0}^{r}M\Leftrightarrow P(m)\in(\bigotimes_{\mathbb{Z}}^{r}M)[G].
\]
Thus it is enough to prove that $P$ gives an isomorphism between
$\mathbb{Q}\wedge_{\mathbb{Z}[G]}^{r}M$ and $\mathbb{Q}(M,G)_{\star}^{r}$.
It is obvious that $P(\mathbb{Q}\wedge_{\mathbb{Z}[G]}^{r}M)\subset\mathbb{Q}(M,G)_{\star}^{r}$
from the definition. Define $\bar{Q}\in{\rm Hom}_{\mathbb{Z}}(\mathbb{Q}(\bigotimes_{j=1}^{r}M)[G],\mathbb{Q}\wedge_{\mathbb{Z}[G]}^{r}M)$
by 
\[
\bar{Q}(m_{1}\otimes\cdots\otimes m_{r}[\sigma])=\frac{1}{(\#G)^{r}r!}m_{1}\wedge\cdots\wedge m_{r}^{\sigma}.
\]
Let $Q\in{\rm Hom}_{\mathbb{Z}}(\mathbb{Q}(M,G)_{\star}^{r},\mathbb{Q}\wedge_{\mathbb{Z}[G]}^{r}M)$
be the restriction of $\bar{Q}$. Let us prove that $Q$ is the inverse
function of $P$. We have $Q\circ P={\rm id}$ since
\begin{align*}
Q(P(m_{1}\wedge\cdots\wedge m_{r})) & =Q(\sum_{f\in\mathfrak{S}_{r}}{\rm sgn}(f)\bigotimes_{j=1}^{r}(\sum_{\sigma\in G}m_{f(j)}^{\sigma^{-1}}[\sigma]))\\
 & =m_{1}\wedge\cdots\wedge m_{r}
\end{align*}
for $m_{1},\dots,m_{r}\in M$. Let us prove that $P\circ Q={\rm id}$.
We define 
\[
R\in{\rm End}_{\mathbb{Z}}(\mathbb{Q}(\bigotimes_{j=1}^{r}M)[G],\mathbb{Q}(\bigotimes_{j=1}^{r}M)[G])
\]
by
\[
R(m_{1}\otimes\cdots\otimes m_{r}\otimes[\tau])=\frac{1}{(\#G)^{r}r!}\sum_{f\in\mathfrak{S}_{r}}{\rm sgn}(f)\sum_{\sigma_{1},\dots,\sigma_{r}\in G}m_{f(1)}^{\sigma_{1}^{-1}}\otimes\cdots\otimes m_{f(r)}^{\sigma_{r}^{-1}}[\sigma_{1}\cdots\sigma_{r}\tau]).
\]
Then we have $P\circ\bar{Q}=R$. since $R(m)=m$ for all $m\in\mathbb{Q}(M,G)_{\star}^{r}$
, we have $P\circ Q={\rm id}$. Thus the claim is proved. 
\end{proof}
For $\chi\in\hat{G}$, we define $r_{S}(\chi)\in\mathbb{Z}_{\geq r}$
and $e_{\chi}\in\mathbb{C}[G]$ by
\[
r_{S}(\chi)=\begin{cases}
\#\{v\in S\mid\chi(G_{v})=1\} & \chi\neq1\\
\#S-1 & \chi=1
\end{cases}
\]
and
\[
e_{\chi}=\frac{1}{\#G}\sum_{\gamma\in G}\chi(\gamma)[\gamma^{-1}].
\]

Let $H$ be a finite abelian extension of $F$ unramified outside
$S$. We define $\Lambda_{H,S,T}\subset(\bigotimes_{\mathbb{Z}}^{r}\mathcal{O}_{H,S,T}^{\times})\otimes_{\mathbb{Z}}\mathbb{Z}[{\rm Gal}(H/F)]$
by
\begin{align*}
\Lambda_{H,S,T} & =\{\alpha\in(\mathcal{O}_{H,S,T}^{\times},G)_{\star}^{r}\mid e_{\chi}\alpha=0\text{ for all }\chi\in\hat{G}\text{ such that }r_{S}(\chi)>r\}
\end{align*}
where $G={\rm Gal}(H/F)$.

\section{\label{sec:Conjecture}The conjecture}

\subsection{Refinement of Gross's leading term conjecture}

We fix an abelian extension $H$ of $F$ and places $w_{1},\dots,w_{r}$
of $H$ lying above $v_{1},\dots,v_{r}$. We assume that $H\in\Upsilon(J)$
and that $v_{1},\dots,v_{r}$ split completely at $H/F$. We assume
the following conjecture.
\begin{conjecture}
[Rubin-Stark conjecture, equivalent to Conjecture B in \cite{MR1385509}]\label{Conj:RubinStark-1}Let
$R_{\infty}:(\bigotimes_{\mathbb{Z}}^{r}\mathcal{O}_{H,S,T}^{\times})\otimes_{\mathbb{Z}}\mathbb{Z}[{\rm Gal}(H/F)]\to\mathbb{R}[{\rm Gal}(H/F)]$
be a homomorphism defined by
\[
R_{\infty}(u_{1}\otimes\cdots\otimes u_{r}\otimes[\sigma])=\left(\prod_{j=1}^{r}-\log\left|u_{j}\right|_{v_{j}}\right)[\sigma].
\]
Then there exists a unique element $\epsilon_{H,S,T,V}\in\Lambda_{H,S,T}$
such that
\[
R_{\infty}(\epsilon_{H,S,T,V})=\lim_{s\to0}s^{-r}\Theta_{H,S,T}(s).
\]

\end{conjecture}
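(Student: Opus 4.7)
The plan is to split the assertion into its analytic and arithmetic parts: (a) the vanishing of $\Theta_{H,S,T}(s)$ to order at least $r$ at $s=0$ together with the realization of the leading term as a transcendental regulator, and (b) the integrality statement that the element representing the leading term lies in $\Lambda_{H,S,T}$. Uniqueness will follow from (a) together with nonvanishing of the regulator, while existence with integrality is part (b), which is the main obstacle.

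For (a) and uniqueness, I would decompose $\Theta_{H,S,T}(s) = \sum_{\chi \in \hat G} L_{S,T}(\chi,s) e_{\bar\chi}$ with $G = \mathrm{Gal}(H/F)$ and use the functional equation to show $\mathrm{ord}_{s=0} L_{S,T}(\chi,s) \geq r_S(\chi)$. Since $v_1,\dots,v_r$ split completely in $H$, every $\chi$ satisfies $r_S(\chi) \geq r$, so $\Theta_{H,S,T}(s)$ vanishes to order at least $r$ and the leading term lies in $\prod e_\chi \mathbb{R}[G]$ summed over $\chi$ with $r_S(\chi)=r$. The Dirichlet-Herbrand theorem identifies $\mathbb{R}\otimes \mathcal{O}_{H,S,T}^\times$ with the relevant $\mathbb{R}[G]$-module via the maps $-\log|\cdot|_{w_j}$, and after projecting to the $\chi$-components with $r_S(\chi)=r$, the map $R_\infty$ becomes an isomorphism from $\mathbb{R}\otimes \Lambda_{H,S,T}$ onto the subspace of $\mathbb{R}[G]$ annihilated by every $e_\chi$ with $r_S(\chi) > r$. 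Inverting this isomorphism on the leading term produces a unique candidate $\epsilon \in \mathbb{R}\otimes \Lambda_{H,S,T}$ satisfying the required equation, giving uniqueness outright.

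The main obstacle is (b): showing that the real candidate $\epsilon$ actually lies in $\Lambda_{H,S,T}$ rather than merely in its scalar extension to $\mathbb{R}$. Rationality, namely $\epsilon \in \mathbb{Q}\otimes \Lambda_{H,S,T}$, is essentially Stark's rank-$r$ conjecture and is known only in restricted settings such as $F=\mathbb{Q}$, the case $r=1$, abelian extensions of imaginary quadratic fields via elliptic units, or for rational-valued characters through Tate's reformulation. The further step of integrality beyond rationality is the genuinely deep content: it forces precise control on denominators of leading terms and typically requires input from cyclotomic or elliptic unit constructions, Iwasawa main conjectures, or the equivariant Tamagawa number conjecture. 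Since the present paper treats Conjecture \ref{Conj:RubinStark-1} as a standing hypothesis on which to build the refinement in Conjecture \ref{Conj:main}, no proof beyond these known cases is attempted in the excerpt, and a complete proof plan would necessarily import integrality from one of these external sources rather than derive it from the Shintani-datum framework developed here.
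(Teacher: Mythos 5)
This statement is labeled as a \emph{conjecture} in the paper (the Rubin--Stark conjecture, equivalent to Conjecture~B of Rubin's 1996 paper), and the paper does not prove it: it is invoked as a standing hypothesis so that the Rubin--Stark element $\epsilon_{H,S,T,V}$ is defined, which is then used to formulate Conjectures \ref{Conj:mainLastSection} and \ref{Conj:main2LastSection}. You correctly recognize this, and your sketch of the state of the art is accurate: the order-of-vanishing bound $\mathrm{ord}_{s=0}L_{S,T}(\chi,s)\ge r_S(\chi)$, the splitting hypothesis forcing $r_S(\chi)\ge r$ for all $\chi$, the $S$-unit theorem identifying the regulator $R_\infty$ as an isomorphism after $\otimes\mathbb{R}$ on the relevant isotypic pieces (hence uniqueness), and the recognition that rationality (Stark) and especially integrality in $\Lambda_{H,S,T}$ are the genuinely open parts known only in special cases. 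Since there is no proof in the paper to compare against, the only substantive check is whether your account of why the statement is conjectural is sound, and it is; you have also correctly located the role the hypothesis plays in the paper's architecture.
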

Our main conjecture is as follows.
\begin{conjecture}
\label{Conj:mainLastSection}Assume that $(S,T,V)$-satisfies the
condition (C1). Let $\epsilon_{H,S,T,V}$ be as in Conjecture \ref{Conj:RubinStark-1}.
Fix $v\in S_{\infty}\setminus V$ and $e\in\{\pm1\}$. Then the element
$\hat{R}_{H/F,S,T,J}(\epsilon_{H,S,T,V})\in D/I_{H}D$ is equal to
the projection of 
\[
Q(B,\mathcal{L}_{v,e},\vartheta)\in D/I_{F^{\times}}D.
\]
\end{conjecture}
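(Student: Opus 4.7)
The plan is to prove the conjecture by reducing to the known case $r=1$ via an induction on $r$, after first isolating the characters that actually contribute. Since both sides of the claimed equality are $\mathbb{Z}[\mathrm{Gal}(H/F)]$-equivariant, I would decompose after tensoring with $\mathbb{C}$ according to characters $\chi$ of $\mathrm{Gal}(H/F)$. The definition of $\Lambda_{H,S,T}$ forces $e_\chi \epsilon_{H,S,T,V} = 0$ whenever $r_S(\chi) > r$, and the same vanishing on the $Q$-side should follow from the analytic fact that $\Theta_{H,S,T}(s)$ vanishes to order at least $r_S(\chi)$ on the $\chi$-component, together with the identity $\mathrm{rec}(\varphi(a_0)) = \Theta_{E,S,T}$ from Lemma \ref{lem:Theta}. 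This reduces the conjecture to characters with $r_S(\chi) = r$, where $\epsilon_{H,S,T,V}$ is directly linked to $\lim_{s\to 0} s^{-r}\Theta_{H,S,T}(s)$ via the Rubin--Stark regulator.

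For the base case $r = 1$ the compatible system collapses to a quadruple $(a_0, a_1, b_1, b_2)$, and unfolding the construction of $Q(B,\mathcal{L}_{v,e},\vartheta)$ modulo $I_H D$ should identify it with an element of $N_{v_1}^{(J)} \otimes \mathbb{Z}[\mathrm{Gal}(H/F)]$ given by an explicit Shintani-cocycle formula. This element coincides with the image of the Gross--Stark unit predicted by Dasgupta \cite{MR2420508} and Dasgupta--Spiess \cite{DasguptaSpiess}, so in rank one the conjecture is essentially their conjectural construction. For the inductive step I would exploit the iterated double-complex structure underlying $Q$: the horizontal differential $\partial_h$ assembles the product $\prod_j ([\eta_j(u_j)]-[1])$ one factor at a time, so by passing from $V$ to $V \setminus \{v_j\}$ and using Proposition \ref{prop:ChangeOfFunctorInQ} to compare the resulting Shintani data, one could try to reduce the rank-$r$ comparison to rank-$1$ comparisons applied to components of $\epsilon_{H,S,T,V}$ obtained by contraction with the local norm-residue maps at $v_2,\dots,v_r$.

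The principal obstacle is that no explicit description of $\epsilon_{H,S,T,V}$ is available in general: it is characterized only by the archimedean regulator equation, whereas $Q(B,\mathcal{L}_{v,e},\vartheta)$ is constructed from purely combinatorial and Shintani-theoretic data. The inductive strategy above is therefore formal unless one has a bridge producing global units of the expected size from local Shintani data, and supplying such a bridge is already the content of Gross's leading term conjecture itself. Consequently any proof of the refinement proposed here will require genuinely new input — plausibly an Iwasawa-theoretic reformulation that places both sides as specializations of a single $p$-adic $L$-function, combined with a main conjecture in the spirit of the techniques used by Dasgupta--Kakde--Ventullo in their proof of the Gross--Stark conjecture.
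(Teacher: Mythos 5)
You have correctly recognized the fundamental point: the statement labelled Conjecture~\ref{Conj:mainLastSection} is, as its name indicates, an open conjecture, and the paper offers no proof of it. The paper only \emph{formulates} it and situates it relative to Gross's leading term conjecture and the constructions of Dasgupta and Dasgupta--Spiess; your concluding paragraph, where you acknowledge that the sketched strategy ``is therefore formal'' and that a genuine proof would need new input in the spirit of Dasgupta--Kakde--Ventullo, is an accurate assessment. So the honest verdict is that your proposal is not a proof, and no proof was expected.

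That said, some of the intermediate steps you sketch do not quite work even as reductions. First, tensoring with $\mathbb{C}$ and decomposing by characters destroys exactly the integral content that the conjecture is about: the statement lives in the quotient $D/I_H D$ of a group ring, and after inverting $\#\mathrm{Gal}(H/F)$ the element $\hat{R}_{H/F,S,T,J}(\epsilon_{H,S,T,V})$ is determined by its projections, but the conjecture's interest is precisely the integral refinement beyond rational (or even $\mathbb{C}$-linear) equality. The character-wise vanishing observation you make about $e_\chi\epsilon_{H,S,T,V}$ is fine as a sanity check, but it cannot serve as a step in an actual proof of the integral identity. Second, your proposed ``base case $r=1$'' is itself only the conjecture of \cite{MR2420508} and \cite{DasguptaSpiess}, as you correctly note; an induction built on an unproven base case establishes nothing. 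Third, the idea of contracting away $v_2,\dots,v_r$ and appealing to Proposition~\ref{prop:ChangeOfFunctorInQ} to pass between Shintani data of different ranks is attractive, but that proposition compares Shintani data over the \emph{same} $V$ and does not on its own produce a relation between $Q$ for $V$ and $Q$ for $V\setminus\{v_j\}$; such a descent compatibility would require a new structural result on the double complexes that the paper does not supply. In short, your strategy correctly identifies the known landmarks near the conjecture, but none of the bridges between them are currently available, which is precisely why the paper states this as a conjecture rather than a theorem.
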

\begin{rem}
We expect $Q(B,\mathcal{L}_{v,e},\vartheta)$ to be independent of
the choice of $v$ and e. We conjecture that\\
\begin{equation}
H_{i}(F^{\times},\beta(W))=0\ \ \ (i>0,W\subset V)\label{eq:co1}
\end{equation}
and that
\begin{equation}
\xi_{v,e}(\vartheta)=\xi_{v',e'}(\vartheta)\label{eq:co2}
\end{equation}
for all $v,v',e,e'$. If (\ref{eq:co1}) and (\ref{eq:co2}) are true
then $Q(B,\mathcal{L}_{v,e},\vartheta)$ does not depend on the choice
of $v$ and $e$ since $Q(B,\mathcal{L}_{v,e},\vartheta)=Q(\beta,\psi,\xi_{v,e}(\vartheta))$
from Proposition \ref{prop:ChangeOfFunctorInQ}.
\end{rem}

\begin{rem}
Assume that $(S,T,V)$-satisfies the condition (C1). If Conjecture
\ref{Conj:mainLastSection} is true then Conjecture \ref{Conjecute:Gross}
is true for all $K\in\Upsilon(J)$.
\end{rem}
Let us take the inverse limit $\xleftarrow[J]{}$ in Conjecture \ref{Conj:mainLastSection}
. We assume that $(S,T,V)$-satisfies the condition (C1). Fix $v\in S_{\infty}\setminus V$
and $e\in\{\pm1\}$. To avoid the confusion, we write $D^{(J)}$ for
$D$, $I_{F^{\times}}^{(D)}$ for $I_{F^{\times}}$ and $I_{H}^{(D)}$
for $I_{H}$. We put 
\[
\hat{\Theta}_{S,T,V,J}=Q(B,\mathcal{L}_{v,e},\vartheta)\in D^{(J)}/I_{F^{\times}}^{(J)}D^{(J)}
\]
and 
\[
\hat{\Theta}_{S,T,V}=\lim_{\substack{\leftarrow\\
J
}
}\hat{\Theta}_{S,T,V,J}\in\lim_{\substack{\leftarrow\\
J
}
}\left(D^{(J)}/I_{F^{\times}}^{(J)}D^{(J)}\right).
\]
Let 
\[
p_{H}:\lim_{\substack{\leftarrow\\
J
}
}\left(D^{(J)}/I_{F^{\times}}^{(J)}D^{(J)}\right)\to\lim_{\substack{\leftarrow\\
J
}
}\left(D^{(J)}/I_{H}^{(J)}D^{(J)}\right)
\]
be a natural projection map. Put 
\[
\hat{R}_{H/F,S,T}=\lim_{\substack{\leftarrow\\
J
}
}\hat{R}_{H/F,S,T,J}.
\]

\begin{conjecture}
\label{Con:mainLastSectionProj}Let $\epsilon_{H,S,T,V}$ be as in
Conjecture \ref{Conj:RubinStark-1}. The following equality holds.

\[
p_{H}(\hat{\Theta}_{S,T,V})=\hat{R}_{H/F,S,T}(\epsilon_{H,S,T,V}).
\]

\end{conjecture}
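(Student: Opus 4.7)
The plan is to derive Conjecture \ref{Con:mainLastSectionProj} from its finite-level refinement, Conjecture \ref{Conj:mainLastSection}, by a formal inverse limit argument. First I would verify the functoriality of both sides in $J$: if $J \subset J'$, the quotient map $N_F^{(J')} \to N_F^{(J)}$ induces a map
\[
D^{(J')}/I_H^{(J')}D^{(J')} \to D^{(J)}/I_H^{(J)}D^{(J)}
\]
which sends $\hat{\Theta}_{S,T,V,J'}$ to $\hat{\Theta}_{S,T,V,J}$; this should be traceable through the construction of $Q$ by using the same free resolution $\mathcal{I}_\bullet \to \mathbb{Z}$ and observing that the transition maps intertwine $\varphi$ and $\partial_v$. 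A parallel check shows that the same map carries $\hat{R}_{H/F,S,T,J'}(\epsilon_{H,S,T,V})$ to $\hat{R}_{H/F,S,T,J}(\epsilon_{H,S,T,V})$, since both $\eta_j$ and $\bar\sigma$ are defined using only the reciprocity map. Granting this, the two inverse systems are well-defined and equality at every finite level forces the equality of inverse limits.

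For the finite-level statement, the strategy is to choose a specific compatible system $(a_0,\dots,a_r,b_1,\dots,b_{r+1})$ for $(B,\mathcal{L}_{v,e},\vartheta)$ and read off both $Q(B,\mathcal{L}_{v,e},\vartheta)$ and (an explicit form of) $\hat{R}_{H/F,S,T,J}(\epsilon_{H,S,T,V})$ from the same data. Each $a_j$ lives over subsets $W \subset V$ with $\#W = r-j$; the boundary conditions encode Shintani cone decompositions compatible with the local structure at places in $V \setminus W$, while the $b_j$ supply the local correction factors at the places of $V$. One expects that, for a suitable choice of compatible system, the product $\prod_{j=1}^r ([\eta_j(u_j)]-[1])$ defining $\hat{R}_{H/F,S,T,J}$ can be matched term-by-term with $\varphi(a_0)-\partial_v(b_1)$, with the $u_j$ playing the role of local components of the Rubin--Stark element. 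Proposition \ref{prop:ChangeOfFunctorInQ}, together with the auxiliary conjectures (\ref{eq:co1}) and (\ref{eq:co2}), would reduce the whole problem to a comparison inside $\beta(V)$, which is the natural arena for a direct computation via Shintani zeta values.

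The hard part is the conjectural nature of $\epsilon_{H,S,T,V}$ itself: for $r \geq 2$ no closed-form description exists, so a direct computation is blocked. The realistic path forward is to leverage the Dasgupta--Spiess $r=1$ case (where the Gross--Stark unit is constructed), and to build higher-rank local regulator images inductively or multiplicatively from the rank-one data, exploiting the wedge structure of $\Lambda_{H,S,T}$ via the isomorphism $P : \wedge^r_0 M \simeq (M,G)^r_\star$. A second serious difficulty is that $\epsilon_{H,S,T,V}$ is characterized by an archimedean regulator condition (the relation $R_\infty(\epsilon_{H,S,T,V}) = \lim_{s\to 0} s^{-r}\Theta_{H,S,T}(s)$), whereas $\hat{\Theta}_{S,T,V,J}$ is defined purely algebraically from special values; bridging the two sides will require a Kronecker-type limit formula, or equivalently a character-by-character uniqueness argument that pins down both sides simultaneously as elements of $D^{(J)}/I_H^{(J)}D^{(J)}$.
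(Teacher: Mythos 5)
There is no proof of this statement in the paper, and none is possible at present: Conjecture~\ref{Con:mainLastSectionProj} is an open conjecture, and the paper's only content concerning it is the single sentence that it is equivalent to the finite-level Conjecture~\ref{Conj:mainLastSection}. Your first paragraph correctly identifies and articulates this reduction --- the compatibility of $\hat{\Theta}_{S,T,V,J}$ and $\hat{R}_{H/F,S,T,J}(\epsilon_{H,S,T,V})$ under the transition maps $N_F^{(J')} \to N_F^{(J)}$, which makes the inverse-limit statement equivalent to the conjunction of the finite-level statements. This is precisely what the paper asserts (without spelling out the functoriality check, which you rightly note needs to be traced through the construction of $Q$ and the regulator).

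Your second and third paragraphs go beyond what the paper does. The paper does not offer, and does not claim, any proof strategy for Conjecture~\ref{Conj:mainLastSection}; it simply formulates the conjecture. The program you sketch (choosing an explicit compatible system, matching $\varphi(a_0)-\partial_v(b_1)$ term-by-term against $\prod_j([\eta_j(u_j)]-[1])$, passing to $\beta$ via Proposition~\ref{prop:ChangeOfFunctorInQ} under the auxiliary conjectures~(\ref{eq:co1}) and~(\ref{eq:co2}), and bridging the archimedean characterization of $\epsilon_{H,S,T,V}$ with the algebraic special-value construction) is a reasonable speculation about how one might attack the problem, and you correctly flag the two central obstructions: the Rubin--Stark element is itself only conjecturally defined for $r\geq 2$, and there is an analytic-vs-algebraic mismatch that would need a Kronecker-limit-type input. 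But none of this appears in the paper, and the paper gives no indication that the author has such an argument. So to match the paper, you should stop after the equivalence observation and state plainly that the finite-level statement is left as a conjecture; the remainder of your proposal should be labelled as a research program rather than a proof attempt.
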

Conjecture \ref{Conj:mainLastSection} and Conjecture \ref{Con:mainLastSectionProj}
are equivalent.

\subsection{Conjectural construction of localized Rubin-Stark element}

Let us give an interpretation of Conjecture \ref{Con:mainLastSectionProj}
as a conjectural construction of localized Rubin-Stark element. We
assume that $(S,T,V)$-satisfies the condition (C1). Fix $v\in S_{\infty}\setminus V$
and $e\in\{\pm1\}$. We put $\hat{\Theta}_{S,T,V,J}=Q(B,\mathcal{L}_{v,e},\vartheta)$.
The regulator map $\hat{R}_{H/F,S,T}$ factor through ${\rm loc}$,
i.e., there exists a unique homomorphism
\[
\hat{\mathcal{R}}_{H/F,S,T}:\lim_{\substack{\leftarrow\\
J
}
}\left(\bigotimes_{j=1}^{r}N_{v_{j}}^{(J)}\otimes_{\mathbb{Z}}\mathbb{Z}[{\rm Gal}(H/F)]\right)\to\lim_{\substack{\leftarrow\\
J
}
}\left(D^{(J)}/I_{H}^{(J)}D^{(J)}\right)
\]
such that $\hat{R}_{H/F,S,T}=\hat{\mathcal{R}}_{H/F,S,T}\circ{\rm loc}$.
Define a homomorphism $\bar{c}_{J}$ from $\mathbb{Z}[N_{F}]$ to
$\bigotimes_{j=1}^{r}N_{v_{j}}^{(J)}\otimes_{\mathbb{Z}}\mathbb{Z}[{\rm Gal}(H/F)]$
by
\[
\bar{c}_{J}([x])=p_{1}(x)\otimes\cdots\otimes p_{r}(x)\otimes[{\rm rec}(x)]
\]
where $p_{j}:N_{F}^{(J)}\to N_{v_{j}}^{(J)}$ are natural projections.
The homomorphism $\bar{c}_{J}$ vanishes on $I_{H}^{(J)}D^{(J)}$.
Let 
\[
c_{J}:D^{(J)}/I_{H}^{(J)}D^{(J)}\to\bigotimes_{j=1}^{r}N_{v_{j}}^{(J)}\otimes_{\mathbb{Z}}\mathbb{Z}[{\rm Gal}(H/F)]
\]
be the map induced from $\bar{c}_{J}$. Put 
\[
c=\left(\lim_{\substack{\leftarrow\\
J
}
}c_{J}\right):\lim_{\substack{\leftarrow\\
J
}
}\left(D^{(J)}/I_{H}^{(J)}D^{(J)}\right)\to\lim_{\substack{\leftarrow\\
J
}
}\left(\bigotimes_{j=1}^{r}N_{v_{j}}^{(J)}\otimes_{\mathbb{Z}}\mathbb{Z}[{\rm Gal}(H/F)]\right).
\]
Then $c\circ\hat{\mathcal{R}}_{H/F,S,T}$ and $\hat{\mathcal{R}}_{H/F,S,T}\circ c$
are identities. 
\begin{defn}
We define the \emph{analytic Rubin-Stark element} 
\[
\epsilon_{H,S,T,V}^{{\rm an}}\in\lim_{\substack{\leftarrow\\
J
}
}\left(\bigotimes_{j=1}^{r}N_{v_{j}}^{(J)}\otimes_{\mathbb{Z}}\mathbb{Z}[{\rm Gal}(H/F)]\right)
\]
 by
\[
\epsilon_{H,S,T,V}^{{\rm an}}=c\circ p_{H}(\hat{\Theta}_{S,T,V}).
\]

\end{defn}
Since $\hat{\mathcal{R}}_{H/F,S,T}\circ c={\rm id}$, we have $p_{H}(\hat{\Theta}_{S,T,V})=\hat{\mathcal{R}}_{H/F,S,T}(\epsilon_{H,S,T,V}^{{\rm an}})$.
\begin{conjecture}
\label{Conj:main2LastSection}Let $\epsilon_{H,S,T,V}$ be as in Conjecture
\ref{Conj:RubinStark-1}. Then we have
\[
{\rm loc}(\epsilon_{H,S,T,V})=\epsilon_{H,S,T,V}^{{\rm an}}.
\]

\end{conjecture}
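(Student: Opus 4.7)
The plan is first to observe that Conjecture \ref{Conj:main2LastSection} is equivalent to Conjecture \ref{Con:mainLastSectionProj}: since $c$ and $\hat{\mathcal{R}}_{H/F,S,T}$ are mutually inverse, the identity $\mathrm{loc}(\epsilon_{H,S,T,V}) = \epsilon_{H,S,T,V}^{\mathrm{an}}$ is equivalent, after applying $\hat{\mathcal{R}}_{H/F,S,T}$, to $\hat{R}_{H/F,S,T}(\epsilon_{H,S,T,V}) = p_H(\hat{\Theta}_{S,T,V})$, which is an integral refinement of Gross's leading term conjecture (Conjecture \ref{Conjecute:Gross}). Thus the task reduces to establishing the latter at every finite level.

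Next, I would fix $J$ and $K \in \Upsilon(J)$ containing $H$, and attempt to verify the congruence $\hat{R}_{H/F,S,T,J}(\epsilon_{H,S,T,V}) \equiv Q(B, \mathcal{L}_{v,e}, \vartheta) \pmod{I_H D}$. The right-hand side unpacks, via the construction of Section \ref{sec:Construction} applied to the Shintani datum built in Section \ref{sec:ShintaniDatumBLT}, into a concrete sum over narrow ideal classes involving Shintani cocycle evaluations $\Xi_{v,e}(D)$ weighted by partial zeta data. The strategy is to match this combinatorial expression term by term with the image of $\epsilon_{H,S,T,V}$ under the enhanced regulator. The case $r=1$ is essentially the Dasgupta--Spiess construction of Gross--Stark units and serves as the template; for general $r$, a natural approach is induction on $r$, contracting against an appropriately chosen character $\chi \in \widehat{\mathrm{Gal}(H/F)}$ for which $v_r$ becomes inert in the corresponding subextension, thereby reducing to a Shintani datum over $V \setminus \{v_r\}$ by exploiting the functoriality captured in Proposition \ref{prop:ChangeOfFunctorInQ}, together with the compatibility of Rubin--Stark elements under such projection of $\chi$-parts.

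The principal obstacle is that $\epsilon_{H,S,T,V}$ itself is only conjecturally defined, so one cannot compare two independently understood objects; instead one must simultaneously construct the candidate Rubin--Stark element from $\hat{\Theta}_{S,T,V}$ and verify its defining properties. Concretely, one has to show that the element produced from $\hat{\Theta}_{S,T,V}$ via $c \circ p_H$ lies in the image under $\mathrm{loc}$ of some element of $\Lambda_{H,S,T}$---this requires verifying the antisymmetry, the $c_\sigma$-equivariance appearing in the definition of $(\mathcal{O}_{H,S,T}^{\times}, G)_\star^r$, and the vanishing on $e_\chi$ for $\chi$ with $r_S(\chi) > r$, all for what is a priori a purely analytic object. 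In the absence of an equivariant main conjecture covering this generality, the most promising route I see is to couple the Shintani construction with Iwasawa-theoretic input: pass up a cyclotomic $\mathbb{Z}_p$-tower, use the compatibility of $\hat{\Theta}_{S,T,V,J}$ under inverse limits in $J$ to produce an Iwasawa-theoretic class, and then descend in the spirit of Dasgupta--Kakde's proof of the Brumer--Stark conjecture to extract both the required integrality and the uniqueness identifying the descended element with $\epsilon_{H,S,T,V}$.
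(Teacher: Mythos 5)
This statement is labeled a \emph{conjecture} in the paper, and the paper does not prove it; the only content the paper offers is the one-line observation that Conjecture \ref{Conj:main2LastSection} and Conjecture \ref{Con:mainLastSectionProj} are equivalent, which follows immediately from the fact that $c$ and $\hat{\mathcal{R}}_{H/F,S,T}$ are mutually inverse isomorphisms. Your first paragraph reproduces exactly this equivalence, and that part is correct and matches the paper.

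Everything after your first paragraph, however, is a speculative research program rather than a proof, and none of it appears in the paper. The proposed induction on $r$ via contraction against a character $\chi$ with $v_r$ inert, the claimed compatibility of Rubin--Stark elements under projection to $\chi$-parts, and especially the appeal to a $\mathbb{Z}_p$-tower with descent in the style of Dasgupta--Kakde are ideas you introduce without any argument that they would succeed; you yourself flag ``the principal obstacle'' (that $\epsilon_{H,S,T,V}$ is only conjecturally defined) but do not overcome it. The genuine gap is therefore simple to name: the statement is open, the paper leaves it open, and your text does not close it --- it reduces it to the equally conjectural Conjecture \ref{Con:mainLastSectionProj} and then gestures toward methods without executing them. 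If the intent was to verify the equivalence stated in the paper, you succeeded; if the intent was to prove the conjecture, there is no proof here to evaluate, and the parts that go beyond the equivalence should be presented as heuristics, not as steps of an argument.
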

Conjecture \ref{Con:mainLastSectionProj} and Conjecture \ref{Conj:main2LastSection}
are equivalent.

\bibliographystyle{plain}
\bibliography{HigherRankDasgupta}

\end{document}